\newtheorem{theorem}{Theorem}[section]
\newtheorem{proposition}{Proposition}[section]
\newtheorem{lemma}{Lemma}[section]
\newtheorem{corollary}{Corollary}[section]
\theoremstyle{definition}
\newtheorem{definition}{Definition}[section]
\theoremstyle{remark}
\newtheorem{remark}{Remark}[section]
\newtheorem{example}{Example}[section]
\newcommand{\dble}{\mathbb{D}(M)}
\DeclareMathOperator{\interior}{int}
\begin{document}


\setlength\parindent{0pt}
\setlength{\parskip}{0.5em}

\title{Integrability of normal distributions \\
Part 2: Neat foliations by manifolds with boundary}

\author{David Perrella}
\author{David Pfefferl{\'e}}
\author{Luchezar Stoyanov}
\affiliation{The University of Western Australia, 35 Stirling Highway, Crawley WA 6009, Australia}

\maketitle

\section{Abstract}
This paper completes the foundations of neatly integrable normal distribution theory on manifolds with boundary. Normal distributions are those which contain vectors transverse to the boundary along its entirety. The theory is observed to be entirely analogous with the theory of integrable distributions on manifolds due to Stefan and Sussmann. The main result is a one-to-one correspondence between so-called neatly integrable normal distributions and neat foliations by manifolds with boundary. Neat foliations are allowed to have non-constant dimension and the leaves have boundary contained in the ambient boundary. The leaves satisfy a characteristic property formally identical to that of weakly embedded submanifolds except in the category of manifolds with boundary.

\section{Introduction}
This work harvests the global fruits of the integrability theory for normal distributions on manifolds with boundary \cite{Part1}. These are distributions which contain vectors transverse to the boundary along its entirety. It is seen that the foundations of normal distributions run parallel with the established theory in the case without boundary under our chosen notions of ``integral", ``regularity", ``maximality" and ``foliation".

In particular, our ``integrals" are so-called \emph{integral manifolds with boundary} and precisely when the boundary is contained by the ambient boundary, we will see that these so-called \emph{neat} integrals are ``weakly embedded". The characteristic property satisfied is identical to that of ordinary weakly embedded submanifolds, except the category is manifolds with boundary. 

Our notion of maximality takes account of all integral manifolds with boundary, and the neat integral manifolds are seen to be the largest. The existence of maximal integral manifolds with boundary guarantees existence of certain foliations and may be decided with the same ease as in the case without boundary. The foliations guaranteed are so-called \emph{neat foliations by manifolds with boundary}. These foliations may have non-constant dimension. The theory in this generality appears to be absent from the literature.

In the case of constant rank, these foliations are also known as \emph{foliations transverse to the boundary} and are used to define the notion of \emph{foliated cobordism} \cite{mizutani1983cobordism,tsuboi1992godbillon,farrell1986h,reinhart1964cobordism,fukui1978remark}. As mentioned in \cite{Part1}, these foliations were considered by Noakes \cite{Noakes} where he generalised Bott's Theorem \cite{Bott} as well as demonstrate how these foliations may be used to guarantee (ordinary) foliations.

This paper is structured as follows. We first review integral manifolds of distributions as well as foliations in the case of manifolds (without boundary) in Section \ref{sec:review_int_mflds_fol}. In Section \ref{sec:neat_main_res}, we discuss neat integral manifolds of normal distributions, neat foliations by manifolds with boundary and state our main results. We then give a proof of these results in Section \ref{sec:Proof}. After this, we make some additional remarks on the structure of neat integral manifolds with boundary in Section \ref{sec:remark}. In Appendix \ref{sec:appendix}, we discuss in detail the smooth properties of weakly embedded submanifolds with boundary.

\section{Review of integral manifolds and foliations}
\label{sec:review_int_mflds_fol}
The definitions of the standard terms we use from smooth manifold theory coincide with that of Lee's book \cite{Lee} and of Rudolf and Schmidt's book \cite{RudolphSchmidt}. Here we will review the known results in the case of distributions on manifolds (without boundary) which we will replicate for normal distributions. We first recall the relevant definitions following Rudolf and Schmidt \cite{RudolphSchmidt} and \cite{Part1}. For the following, let $M$ be a manifold without boundary.

For an arbitrary subset $D \subset TM$, a \textbf{\emph{$\bm{D}$-section}} is a vector field $X$ on $M$ such that for all $p \in M$, $X|_p \in D$. We also denote $D_p = D \cap T_pM$ for $p \in M$. With this, $D$ is called a \textbf{\emph{distribution on $\bm{M}$}} if for all $p \in M$, $D_p$ is a vector subspace of $T_pM$ and for any $v \in D_p$, there exists a $D$-section $X$ with $X|_p = v$. The \textbf{\emph{rank of $\bm{D}$}} is the map $r : M \to \mathbb{N}_0$ given by $r(p) = \dim D_p$ (where $\mathbb{N}_0$ denotes the non-negative integers). For a smooth map $f : S \to M$ between manifolds, we set $f^*D = (Tf)^{-1}(D)$ and call $f^*D$ the \textbf{\emph{pullback of $\bm{D}$ on to $\bm{S}$}} (where $Tf:TS\to TM$ denotes the tangent map).

Let $D$ be a distribution on $M$ of rank $r$. An \textbf{\emph{integral manifold of $\bm{D}$}} is a connected immersed submanifold $I$ of $M$ with inclusion $i : I \subset M$ such that, for $p \in I$, $Ti|_p(T_pI) = D_p$. We say that $D$ is \textbf{\emph{integrable}} if through every point of $M$, there passes an integral manifold of $D$.

One can detail the ``regularity" of an integral manifold of a distribution; they are in fact weakly embedded. Recall, an immersed submanifold $S$ of $M$ is said to be \textbf{\emph{weakly embedded}} if for any smooth map $f : N \to M$ between manifolds with $f(N) \subset S$, the induced map $N \to S$ by $f$ is also smooth. This regularity is the contents of \cite[Proposition 3.5.15]{RudolphSchmidt} as follows.

\begin{proposition}\label{boundarylessweakembedded}
Let $D$ an integrable distribution on $M$. Then, all integral manifolds of $D$ are weakly embedded in $M$.
\end{proposition}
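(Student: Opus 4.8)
The plan is to reduce everything to the local normal form furnished by integrability and then exploit a countability argument. First I would invoke the standard distinguished (flat) chart adapted to $D$: around any $p \in M$ there is a chart $(U,\phi)$, $\phi : U \to V \times W$ with $V \subseteq \mathbb{R}^{k}$ and $W \subseteq \mathbb{R}^{n-k}$ open and $k = r(p)$, such that $D$ restricted to $U$ equals the kernel of $d(\pi_W \circ \phi)$, where $\pi_W$ denotes projection onto $W$; equivalently the slices $S_c := \phi^{-1}(V \times \{c\})$ are $k$-dimensional integral manifolds of $D$. Such charts are exactly what integrability provides (Frobenius in the constant-rank case, Stefan--Sussmann in general).

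The first key step is a local structure lemma: for any integral manifold $I$ of $D$ and any such chart $(U,\phi)$, the image $\pi_W(\phi(I \cap U))$ is countable, i.e.\ $I \cap U$ meets only countably many slices. I would argue this by noting that $I$ is second countable, so the open subset $I \cap U$ (in the immersed topology of $I$) has at most countably many connected components; on each component the smooth map $\pi_W \circ \phi \circ i$ has vanishing differential, because $Ti(TI) = D = \ker d(\pi_W \circ \phi)$, and is therefore constant. Hence each component lands in a single slice and only countably many slices occur.

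With this in hand, the main argument runs as follows. Let $f : N \to M$ be smooth with $f(N) \subseteq I$, and let $\tilde f : N \to I$ be the induced set map (well defined since $i$ is injective); I must show $\tilde f$ is smooth, which in particular yields its continuity. Fixing $q \in N$, set $p = f(q)$ and choose a distinguished chart as above with $k = r(p) = \dim I$, which is legitimate since $p \in I$ forces $\dim T_pI = \dim D_p = r(p)$. By continuity of $f$ into $M$, pick a connected neighbourhood $N_0$ of $q$ with $f(N_0) \subseteq U$. Then $h := \pi_W \circ \phi \circ f|_{N_0}$ is smooth with image inside the countable set $\pi_W(\phi(I \cap U))$; a continuous map from the connected $N_0$ into a countable subset of Euclidean space is constant, so $h \equiv c$ and $f(N_0) \subseteq S_c$. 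Since $S_c$ is an embedded submanifold, the corestriction $f_0 : N_0 \to S_c$ is smooth. Finally, the plaques of $I$ lying in $S_c$ are disjoint open subsets of $S_c$ (each is a $k$-dimensional integral manifold inside the $k$-dimensional $S_c$, so its inclusion is an open map), whence the connected set $f_0(N_0)$ meets exactly one plaque $P$; on $P$ the immersed structure of $I$ coincides with the embedded structure of $S_c$. Thus $\tilde f|_{N_0}$ is the composite of the smooth corestriction $N_0 \to P$ with the smooth open inclusion $P \hookrightarrow I$, hence smooth. As $q$ was arbitrary, $\tilde f$ is smooth.

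The main obstacle is the bootstrapping at the heart of the last step: the definition demands smoothness (and hence continuity) of $\tilde f$ with no continuity hypothesis, yet to confine $f(N_0)$ to a single plaque $P$ one seemingly needs continuity of $\tilde f$ in advance. The local structure lemma is precisely what breaks this circularity --- countability of the slices met by $I$ turns the purely ambient-smooth map $h$ into a constant, which forces $f(N_0)$ into one slice and then, by the disjoint-open-plaque observation, into one plaque. The only further care required is in centring the chart so that the slice dimension equals $\dim I$, as recorded above.
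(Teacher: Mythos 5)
The paper itself does not prove Proposition \ref{boundarylessweakembedded}; it is quoted from Rudolph and Schmidt (their Proposition 3.5.15). Your proposal reconstructs what is essentially the standard argument behind that citation: adapted charts, a countability lemma for the slices met by an integral manifold, constancy of the transverse projection on connected sets, and openness of plaques inside a slice. The logical skeleton --- using countability to break the circularity between continuity of the induced map and confinement to a single plaque --- is exactly right, and every step of the main argument is sound.

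One ingredient is overstated, and it matters because the paper's setting (Stefan--Sussmann, not Frobenius) allows non-constant rank. The chart you posit --- $D$ restricted to $U$ equal to $\ker d(\pi_W \circ \phi)$ on all of $U$, equivalently \emph{every} slice $S_c$ an integral manifold --- does not exist in general: such a chart forces the rank to be constant (equal to $k$) throughout $U$, whereas the rank of a Stefan--Sussmann integrable distribution is only lower semicontinuous and can jump upward off the central slice. What the adapted charts actually provide (compare the neatly adapted charts quoted in the paper's proof of Proposition \ref{neat foliations}) is weaker: the coordinate fields $\partial_{n-k+1},\dots,\partial_n$ are $D$-sections on $U$, so $D \supseteq \ker d(\pi_W\circ\phi)$ with equality precisely where the rank equals $k$, and the rank is constant along each slice. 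Relatedly, your countability lemma as stated (``any integral manifold, any such chart'') is false in the singular case: an integral manifold of dimension larger than $k$ can meet uncountably many slices, and the vanishing-differential argument breaks for it. Fortunately, your application never uses more than the weaker facts: you centre the chart at $p = f(q) \in I$ with $k = r(p) = \dim I$; since $Ti(T_{q'}I) = D_{q'}$ for all $q' \in I$, the rank of $D$ along $I$ is identically $\dim I = k$, so $D = \ker d(\pi_W\circ\phi)$ holds at every point of $I \cap U$, which is all the countability lemma needs; the slice your map actually lands in is the central one $S_0$ (as $h(q)=0$), which is a genuine integral manifold by rank-constancy along slices; and plaque openness requires nothing beyond equality of dimensions. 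So the proof is correct once the chart hypotheses are stated as Stefan--Sussmann actually delivers them; as written, the invoked normal form is too strong and simply fails to exist at points where the rank is not locally constant.
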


Another important feature of integrable distributions is the existence of maximal integral manifolds. Recall (e.g. \cite{Part1}), an integral manifold $I$ of a distribution $D$ on a manifold $M$ is \textbf{\emph{maximal}} if, for any integral manifold $H$ of $D$ with a point in common with $I$, $H$ is an open submanifold of $I$. This feature is the contents of \cite[Theorem 3.5.17]{RudolphSchmidt} which we express using our terminology as follows.

\begin{theorem}\label{boundarylessmaximal}
Let $M$ be a manifold and $D$ an integrable distribution on $M$. Then, through every point of $M$, there passes a unique maximal integral manifold of $D$.
\end{theorem}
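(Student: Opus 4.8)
The plan is to dispatch uniqueness first, since it is almost immediate, and then to construct the maximal integral manifold through a given point by hand. For uniqueness, suppose $I$ and $I'$ are both maximal integral manifolds of $D$ through a common point $p$. Because $I'$ shares $p$ with $I$ and $I$ is maximal, $I'$ is an open submanifold of $I$; symmetrically $I$ is an open submanifold of $I'$. Two immersed submanifolds each of which is an open submanifold of the other must coincide as sets and carry the same smooth structure, so $I = I'$. It therefore remains to prove existence. Fixing $p \in M$, I would define a relation on $M$ by declaring $x \sim y$ whenever there is a finite chain of integral manifolds $I_1, \dots, I_k$ of $D$ with $x \in I_1$, $y \in I_k$ and $I_j \cap I_{j+1} \neq \varnothing$ for each $j$. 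Integrability supplies an integral manifold through every point, so $\sim$ is reflexive; it is plainly symmetric and transitive by concatenating chains, hence an equivalence relation. Let $L$ be the equivalence class of $p$, which set-theoretically is the union of all integral manifolds chainable to $p$. I would equip $L$ with the structure whose charts are inherited from the integral manifolds $I \subseteq L$, declaring a set open in $L$ iff its intersection with every such $I$ is open in $I$.

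The crux is to check that these charts form a genuine smooth atlas: whenever two integral manifolds $I_1, I_2 \subseteq L$ meet, their intersection must be open in each and the identity must be a diffeomorphism between the two induced structures on the overlap. This is where Proposition \ref{boundarylessweakembedded} enters. First, using the local slice model for integral manifolds afforded by integrability (the adapted-chart structure underlying Proposition \ref{boundarylessweakembedded}), one shows that near a common point $q$ the connected integral manifolds $I_1$ and $I_2$ each occupy a single slice, so $I_1 \cap I_2$ is open in both. Then, because $I_2$ is weakly embedded, the smooth inclusion of this open overlap into $M$—whose image lies in $I_2$—factors smoothly through $I_2$, and symmetrically for $I_1$; hence the transition between the $I_1$- and $I_2$-structures is smooth both ways and the atlas is well defined.

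With the atlas in place, I would verify that $L$ is an integral manifold of $D$. Connectedness follows because the chaining relation makes $L$ path-connected through the connected integral manifolds composing it, and the condition $Ti|_q(T_qL) = D_q$ holds because $L$ agrees locally with an integral manifold through $q$. The topological niceties—Hausdorffness and second countability of $L$—also need attention, second countability being the familiar subtle point for leaves, which I would settle by showing each integral manifold meets the defining chart cover of $L$ in at most countably many plaques. Maximality is then built into the construction: any integral manifold $H$ of $D$ sharing a point with $L$ is chainable to $p$, so $H \subseteq L$, and by the overlap argument $H$ is open in $L$ with matching structure, hence an open submanifold of $L$.

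The main obstacle is the crux step: showing that overlapping integral manifolds coincide on relatively open sets with matching smooth structure. Tangency at a single point is not enough—two curves can share a tangent line and still diverge—so this genuinely requires the local slice structure provided by integrability together with the weak embeddedness of Proposition \ref{boundarylessweakembedded}. The secondary obstacle is confirming second countability of the assembled leaf $L$.
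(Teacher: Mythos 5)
A point of order first: the paper does not actually prove Theorem \ref{boundarylessmaximal}; it is quoted as background from Rudolph and Schmidt (their Theorem 3.5.17), so there is no internal proof to compare yours against, and your proposal can only be judged on its own merits. On those merits it is sound: this is the standard construction (chain-equivalence classes of integral manifolds, glued along overlaps, carrying the final topology), your uniqueness argument is correct, and you have isolated the two genuinely delicate points, namely compatibility of overlapping integral manifolds and second countability of the assembled leaf. Two caveats for a full write-up. First, the claim that near a common point the integral manifolds ``each occupy a single slice'' is false as literally stated --- a connected integral manifold can re-enter a chart in infinitely many plaques (an irrational line on the torus does) --- what is true, and what suffices, is that a sufficiently small neighbourhood of the common point $q$ in each $I_j$, taken in $I_j$'s own topology, lies in the single slice through $q$; since that slice is embedded in $M$, each such neighbourhood co-restricts to an injective immersion into a manifold of equal dimension, hence sits as an open submanifold of the slice, and openness plus smooth compatibility of $I_1 \cap I_2$ in both structures follows by applying this at every common point. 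Second, be explicit about what this step consumes: it is the Stefan--Sussmann local normal form (existence of charts adapted to $D$ at every point, which for non-constant-rank distributions is a substantial theorem in itself), not Proposition \ref{boundarylessweakembedded}. Weak embeddedness applies only to maps whose image lies inside the submanifold, and neither of $I_1$, $I_2$ is contained in the other, so it cannot by itself yield openness of the overlap; your parenthetical remark suggests you know this, but in a complete proof the adapted-chart theorem must be invoked as a named ingredient, both here and in the second-countability argument (cover $L$ by countably many charts adapted at points of $L$, using that $M$ is hereditarily Lindel\"of, and count the plaques reachable from a fixed one by finite chains).
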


We now address foliations. Let $n = \dim M$. A \textbf{\emph{foliation}} $\mathcal{F}$ is a partition of $M$ by weakly embedded submanifolds such that for every $p \in M$ there exists a chart $(U,\varphi)$ about $p$ satisfying the following.

\begin{enumerate}
  \item $\varphi(p) = 0$ and $\varphi(U) = (-\epsilon,\epsilon)^n$ for some $\epsilon > 0$.
  \item All $I \in \mathcal{F}$ are invariant under the flows of $\partial_1,...,\partial_R$ where $R$ denotes the dimension of the unique $H \in \mathcal{F}$ in the partition containing $p$.
\end{enumerate}

Given a foliation $\mathcal{F}$, an element $I \in \mathcal{F}$ is called a \textbf{\emph{leaf of $\bm{\mathcal{F}}$}}. The function $r : M \to \mathbb{N}_0$ given by $r(p) = \dim I$ where $I$ is the unique leaf of $\mathcal{F}$ containing $p$ is called the \textbf{\emph{dimension of $\bm{\mathcal{F}}$}}.

The following is Proposition 3.5.21 in Rudolf and Schmidt's book \cite{RudolphSchmidt} which establishes a correspondence between integrable distributions and foliations.

\begin{proposition}\label{dist and foliation correspondence}
Let $M$ be a manifold. The assignment of the family of maximal integral manifolds to an integrable distribution defines a bijection between integrable distributions and foliations on $M$. Integrable distributions of constant rank $r$ thereby correspond to foliations of constant dimension $r$.
\end{proposition}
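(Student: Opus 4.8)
The plan is to establish the bijection by showing the assignment is well-defined, injective, and surjective, leveraging the results already at hand. The forward map sends an integrable distribution $D$ to its family of maximal integral manifolds; the reverse map sends a foliation $\mathcal{F}$ to the distribution it tangentially spans, namely $D_p = T_pI$ where $I \in \mathcal{F}$ is the leaf through $p$ (identified via the inclusion's tangent map).

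First I would verify that the forward assignment actually lands in the set of foliations. By Theorem \ref{boundarylessmaximal}, through each point passes a unique maximal integral manifold, so these partition $M$; by Proposition \ref{boundarylessweakembedded} each is weakly embedded. The chart condition in the definition of foliation is the local structure one obtains from the Stefan--Sussmann/Frobenius-type normal form for integral manifolds: about each $p$ one selects $D$-sections whose values span $D_p$, and their commuting flows (after straightening) furnish a chart in which the plaques are exactly the slices and all maximal leaves are invariant under $\partial_1,\dots,\partial_R$. This is the substantive geometric input, but since it is precisely what underlies Theorem \ref{boundarylessmaximal} I would cite it rather than reprove it.

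Next I would check the reverse assignment produces an integrable distribution and is a genuine two-sided inverse. Given a foliation $\mathcal{F}$, the foliation chart shows that in coordinates the leaf tangent spaces are spanned by $\partial_1,\dots,\partial_R$, so $D$ as defined is locally spanned by smooth vector fields and hence is a distribution; each leaf is by construction an integral manifold, so $D$ is integrable. For the two compositions: starting from $D$, its maximal integral manifolds have tangent spaces equal to $D_p$ by definition of integral manifold, so we recover $D$; starting from $\mathcal{F}$, the maximal integral manifolds of the recovered distribution coincide with the leaves, using uniqueness of maximal integral manifolds (Theorem \ref{boundarylessmaximal}) together with the fact that each leaf is already a maximal integral manifold. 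This last coincidence is where I expect the main obstacle to lie: one must argue that a leaf is not merely \emph{an} integral manifold but a \emph{maximal} one, which follows because the foliation charts force any integral manifold meeting a leaf to be locally contained in it, and then a connectedness argument (integral manifolds are connected) promotes this to the global inclusion required by maximality.

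Finally, the constant-rank refinement is immediate: the rank function $r(p) = \dim D_p$ equals the dimension function $r(p) = \dim I$ of the corresponding foliation under the bijection, since $\dim D_p = \dim T_pI$ for the leaf $I$ through $p$. Hence $D$ has constant rank $r$ if and only if $\mathcal{F}$ has constant dimension $r$, completing the correspondence.
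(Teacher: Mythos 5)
Your proposal is correct in outline, but note first that the paper never proves Proposition \ref{dist and foliation correspondence}: it is quoted verbatim from Rudolph and Schmidt (their Proposition 3.5.21). The only in-paper comparison available is the proof of its boundary analogue, Proposition \ref{neat foliations}, and against that proof your architecture is essentially the same: the forward map is the partition into maximal integral manifolds (Theorem \ref{boundarylessmaximal} for the partition, Proposition \ref{boundarylessweakembedded} for weak embeddedness, adapted charts for the chart condition), the reverse map is $\mathcal{F} \mapsto D_{\mathcal{F}}$ with bump functions supplying global $D_{\mathcal{F}}$-sections, and the composition $D_{\mathcal{F}_D} = D$ is trivial. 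Two comments on where you differ. First, a small attribution slip: the existence of adapted charts and the invariance of \emph{all} maximal leaves under the coordinate flows is not "what underlies Theorem \ref{boundarylessmaximal}" but is the Stefan--Sussmann theorem itself (the paper's boundary proof cites condition S4 of Theorem \ref{Normal Stefan-Sussmann} for the chart and a separate absorption property --- maximal integral manifolds contain the integral curves of $D$-sections they meet --- for the invariance); you should cite both ingredients explicitly. Second, and more substantively, for the composition $\mathcal{F}_{D_{\mathcal{F}}} = \mathcal{F}$ you prove directly that each leaf is a maximal integral manifold via a chart-based plaque argument plus connectedness, whereas the paper takes the maximal integral manifold $L$ through $p$, notes that every leaf meeting $L$ is an open submanifold of $L$ by maximality (the role played by Lemma \ref{neatintsareweak} in the boundary case), and concludes $L$ equals the leaf through $p$ because the leaves meeting $L$ partition $L$ into disjoint open pieces and $L$ is connected; this route costs nothing beyond the maximality theorem already in hand, while yours re-does local Frobenius-type analysis. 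If you keep your route, two repairs are needed: (i) your claim that "in coordinates the leaf tangent spaces are spanned by $\partial_1,\dots,\partial_R$" is false in the non-constant-rank setting --- leaves through the chart other than the one at the centre may have dimension larger than $R$, their tangent spaces merely \emph{containing} that span --- so only the pointwise statement $(D_{\mathcal{F}})_p = \mathrm{span}\{\partial_1|_p,\dots,\partial_R|_p\}$ may be used; (ii) to get local containment of an integral manifold $H$ meeting a leaf $K$ at $x$, you must first observe that $\dim H = R$ forces $(D_{\mathcal{F}})_y = \mathrm{span}\{\partial_1|_y,\dots,\partial_R|_y\}$ at every $y \in H$ near $x$, so that $H$ is an integral manifold of the flat constant-rank distribution on the chart and the classical plaque argument applies. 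With those repairs both routes go through, and your constant-rank refinement is handled exactly as implicitly in the paper.
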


We will now make similar definitions in the case of normal distributions on manifolds with boundary and state the analogous results there.

\section{Neat integral manifolds, neat foliations, and the main results}
\label{sec:neat_main_res}

We first recall some definitions surrounding integral manifolds from \cite{Part1} as well as make some additional definitions which mirror the case without boundary. For the following, let $M$ be a manifold with boundary. The definition of distribution is unchanged as compared with \ref{sec:review_int_mflds_fol} except now $M$ has boundary.

Let $D$ be a distribution on $M$. A connected immersed submanifold $L$ with boundary of $M$ is called an \textbf{\emph{integral manifold with boundary of $\bm{D}$}} if for $p \in L$, $Tl|_p(T_pL) = D_p$ where $l : L \subset M$. We will say that $D$ is \textbf{\emph{integrable by boundaries}} if through every point of $M$, there passes an integral manifold with boundary of $D$.

An integral manifold with boundary $L$ of $D$ is said to be \textbf{\emph{neat}} if $\partial L \subset \partial M$. Accordingly, $D$ is said to be \textbf{\emph{neatly integrable by boundaries}} (or \textbf{\emph{neatly integrable}} for short) if through any point in $M$, there passes a neat integral manifold of $D$.

Our results are centered around normal distributions. Recall, the distribution $D$ is said to be \textbf{\emph{normal}} if for every $p \in \partial M$, there exists a vector in $D_p$ transverse to $\partial M$.

For a submanifold with boundary $S$ of $M$, we say that $S$ is \textbf{\emph{weakly embedded in $\bm{M}$}} if for any smooth map $f : N \to M$ between manifolds with boundary such that $f(N) \subset S$, the induced map $N \to S$ by $f$ is also smooth.

This details the level of ``regularity" for neat integral manifolds of neatly integrable normal distributions. The following is an analogue of Proposition \ref{boundarylessweakembedded}.

\begin{proposition}\label{normal dist weak embedd}
Let $M$ be a manifold with boundary and $D$ be a normal distribution on $M$ which is neatly integrable. Then, the neat integral manifolds are weakly embedded in $M$.
\end{proposition}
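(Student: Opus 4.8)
The plan is to adapt the proof of the boundaryless case (Proposition \ref{boundarylessweakembedded}) to the category of manifolds with boundary, exploiting the normality hypothesis to handle points on $\partial M$. Let $L$ be a neat integral manifold of $D$, and let $f : N \to M$ be a smooth map from a manifold with boundary $N$ with $f(N) \subset L$. I must show the induced set-map $\tilde{f} : N \to L$ is smooth. Smoothness is a local property on $N$, so I would fix $q \in N$ and produce a neighbourhood of $q$ on which $\tilde{f}$ is smooth. The classical argument proceeds by building, around each point $p = f(q) \in L$, a ``flat'' chart for $D$ (a chart in which the leaves are plaques, i.e. slices of the form where the last $n - R$ coordinates are constant), pulling back the plaque coordinates by $f$, and observing that continuity of $\tilde{f}$ together with the countability of plaques forces $\tilde{f}$ near $q$ to land in a single plaque, on which it is manifestly smooth.

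The first key step is therefore to establish the existence of a suitable flat chart for a normal distribution near a boundary point, and here is where the normality hypothesis does its work. Away from $\partial M$ the usual Stefan--Sussmann flat-box construction applies verbatim. At a point $p \in \partial M$, normality guarantees a vector in $D_p$ transverse to $\partial M$; the plan is to use the flow of a $D$-section realising this transverse vector (together with $D$-sections spanning the remaining leaf directions) to straighten $D$ into a half-space coordinate model in which $\partial M$ corresponds to one coordinate being zero and the neat leaf through $p$ appears as a plaque of a manifold-with-boundary whose boundary sits inside $\{x^{\text{(transverse)}} = 0\}$. I would import this chart from the local theory of \cite{Part1}, since it is precisely the normal/neat analogue of the flat chart and should be available there; the transverse vector ensures the straightened leaf meets $\partial M$ cleanly in its boundary rather than tangentially.

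With such a chart $(U,\varphi)$ in hand, the remaining argument is the standard plaque-counting one, now carried out in the manifold-with-boundary category. I would note that $L \cap U$ decomposes into at most countably many plaques (using second countability of $L$), set $V = f^{-1}(U)$, an open neighbourhood of $q$ in $N$, and observe that $\tilde{f}(V)$ is contained in the countable union of plaques of $L$ lying in $U$. Since $N$ is locally connected and each plaque is open in the slice-topology image while the plaques are pairwise disjoint, continuity of $\tilde{f}$ (which follows because the inclusion $L \subset M$ is continuous and injective, so $\tilde f = (l)^{-1}\circ f$ is continuous on the relevant subspace) confines $\tilde{f}(V')$ to a single plaque $P$ for some smaller connected neighbourhood $V'$ of $q$. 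On $P$ the plaque coordinates are smooth functions of the $\varphi$-coordinates, and $\varphi \circ f$ is smooth, so $\tilde{f}|_{V'}$ is smooth into $L$.

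The main obstacle I anticipate is the boundary bookkeeping in the two reductions that were transparent in the boundaryless setting: first, verifying that the neat flat chart genuinely realises the leaf as a plaque of a \emph{manifold with boundary} with $\partial L \subset \{x = 0\} \subset \partial M$, which is exactly where normality is indispensable and where one must be careful that $f$ may send boundary points of $N$ to interior points of $L$ or to $\partial L$; and second, the continuity step, since the topology of an immersed submanifold with boundary requires checking that the plaque neighbourhoods form a basis adapted to the half-space model near $\partial L$. I expect the cleanest route is to invoke the local straightening result from \cite{Part1} as a black box and devote the care to the plaque-connectedness argument, which is where the manifold-with-boundary category could introduce subtleties that are absent in Rudolph and Schmidt's treatment.
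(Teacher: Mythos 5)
Your overall skeleton (neatly adapted charts from \cite{Part1} at boundary points, plus a plaque-counting argument) is a viable \emph{direct} route, and it genuinely differs from the paper, which never works with plaques at all: the paper extends $D$ across a collar into an integrable distribution $\tilde{D}$ on a neighbourhood of the double $\mathbb{D}(M)$, quotes the boundaryless theory (Proposition \ref{boundarylessweakembedded}, Theorem \ref{boundarylessmaximal}) for $\tilde{D}$, and pulls the resulting weakly embedded leaves back to $M$ (Lemmas \ref{final work}, \ref{existsmax}, \ref{neatintsareweak}). However, your proposal has a genuine gap at its crucial step: you justify continuity of the induced map $\tilde{f} : N \to L$ by saying that the inclusion $l : L \subset M$ is continuous and injective, ``so $\tilde{f} = l^{-1} \circ f$ is continuous on the relevant subspace.'' This is false: the inverse of a continuous injection need not be continuous, and this failure is exactly the phenomenon that makes weak embeddedness a nontrivial property. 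The leaf topology of $L$ is in general strictly finer than the subspace topology of $\text{Pts}(L) \subset M$; for instance, take $M = T^2 \times [0,1)$ with $D$ spanned by an irrational-slope direction on $T^2$ together with $\partial_t$ --- a normal, neatly integrable distribution whose neat leaves are dense in $M$, so that $l^{-1}$ is discontinuous from the subspace topology. Since your confinement-to-a-single-plaque step takes continuity of $\tilde{f}$ as an input, and continuity of $\tilde{f}$ is part of what is being proved, the argument as written is circular precisely where the work has to happen.

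The standard repair (this is how the boundaryless proofs in \cite{Lee} and \cite{RudolphSchmidt} avoid the issue) is to never mention continuity of $\tilde{f}$ in the confinement step: instead, compose $f$ with the \emph{transverse} coordinates of the adapted chart, obtaining a continuous map $\pi \circ \varphi \circ f : V \to \mathbb{R}^{n-R}$ (continuity here only uses smoothness of $f$ into $M$). Its image on a connected neighbourhood $V'$ of $q$ is connected and, by your countability observation, contained in the countable set of transverse values of the slices meeting $L \cap U$; a connected countable subset of $\mathbb{R}^{n-R}$ is a point, so $f(V')$ lies in a single slice, and smoothness of $\tilde{f}|_{V'}$ then follows by reading off the slice coordinates. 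Two further points need care in your setting and deserve explicit proof rather than being absorbed into the chart: first, in the non-constant-rank case one must check that each component of $L \cap U$ lies in a single slice, which uses the rank-constancy along slices provided by the neatly adapted charts of Theorem \ref{Normal Stefan-Sussmann} together with the fact that $r \equiv \dim L$ on $L$; second, the statement concerns \emph{all} neat integral manifolds, not only maximal ones, so components of $L \cap U$ are open subsets of slices rather than full slices, and the compatibility of the two smooth structures on such a component must be checked (same-dimension bijective immersions onto open subsets are diffeomorphisms). With those repairs your direct proof goes through and is more self-contained than the paper's; what the paper's doubling argument buys is that all of this plaque bookkeeping, including the boundary case, is inherited wholesale from the established boundaryless theory.
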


Concerning ``maximality", we must slightly weaken the definition compared to the case without boundary. The reason for this is illustrated by Example \ref{necessaryweaken}. For an integral manifold $L$ with boundary of $D$ on $M$, $L$ is said to be \textbf{\emph{maximal}} if, for any integral manifold $K$ with boundary of $D$ with a point in common with $L$, $K$ is a submanifold with boundary of $L$.

One of the main results of this paper is the analogue of Theorem \ref{boundarylessmaximal}. The statement is as follows.

\begin{theorem}\label{normal dist maximal}
Let $M$ be a manifold with boundary and $D$ be a normal distribution on $M$ which is neatly integrable. Then, for each point of $M$, there passes a unique maximal integral manifold with boundary. Moreover, the maximal integral manifolds with boundary are neat.
\end{theorem}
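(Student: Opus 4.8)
The plan is to build the maximal integral manifold through a point $p \in M$ as the union of all neat integral manifolds with boundary passing through $p$, and to show this union carries a natural smooth structure making it a neat integral manifold with boundary of $D$. First I would fix $p$ and let $\mathcal{L}_p$ denote the collection of all neat integral manifolds with boundary of $D$ containing $p$; by neat integrability this collection is nonempty. Set $L = \bigcup_{K \in \mathcal{L}_p} K$ as a subset of $M$. The strategy mirrors the boundaryless case (Theorem \ref{boundarylessmaximal}): I would use Proposition \ref{normal dist weak embedd} to show that any two members of $\mathcal{L}_p$ overlap in a way that is compatible, so that their smooth structures glue. Concretely, if $K_1, K_2 \in \mathcal{L}_p$, then since each is weakly embedded, the inclusion $K_1 \cap K_2 \hookrightarrow K_2$ induced by the smooth inclusion $K_1 \subset M$ is smooth, and symmetrically, so the overlap is open in each and the two smooth structures agree there.

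The key steps, in order, are as follows. First, establish that $\mathcal{L}_p$ is closed under the equivalence generated by ``sharing a point'': any neat integral manifold meeting $L$ must in fact be contained in $L$, which follows from the transitivity afforded by weak embeddedness. Second, topologize $L$ by declaring a set open if its intersection with each $K \in \mathcal{L}_p$ is open, and equip it with the smooth structure for which each inclusion $K \hookrightarrow L$ is an open smooth embedding; here the weak embeddedness from Proposition \ref{normal dist weak embedd} guarantees the transition maps between overlapping charts from different $K$ are smooth. Third, verify that $L$ with this structure is connected (it is a union of connected sets through the common point $p$), is an immersed submanifold with boundary of $M$ with $Tl|_q(T_qL) = D_q$ at every $q$, and is neat, i.e. $\partial L \subset \partial M$ — the latter because each constituent $K$ is neat so boundary points of $L$ are boundary points of some $K$, hence lie in $\partial M$. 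Fourth, check maximality directly from the construction: any integral manifold with boundary $H$ (not necessarily neat) meeting $L$ at a point is contained in $L$ as a submanifold with boundary, which again uses weak embeddedness to see the inclusion $H \to L$ is a smooth embedding onto a submanifold with boundary.

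I would handle uniqueness by noting that any maximal integral manifold through $p$ must contain every neat integral manifold through $p$ (by the maximality definition applied to neat integrals), hence must contain $L$; conversely $L$ is maximal, so the two coincide as sets and, being weakly embedded, as smooth manifolds with boundary. The final clause — that the maximal integral manifolds are neat — is immediate once $L$ is shown neat, since $L$ is the maximal integral manifold through each of its points.

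The main obstacle I anticipate is the second and third steps: showing that the glued object $L$ is genuinely a smooth manifold with boundary of possibly non-constant dimension, and that its boundary behaves correctly. Unlike the boundaryless case, one must track where boundary points of the various $K$ sit relative to one another, and confirm that a point which is interior in one constituent cannot be a boundary point in another — this is where the normality hypothesis and the containment $\partial K \subset \partial M$ are essential, since a neat integral manifold can only acquire boundary at $\partial M$, so the boundary status of a point $q \in L$ is unambiguously determined by whether $q \in \partial M$. Establishing this compatibility of boundary structure, and thereby that the transition maps respect the half-space charts, is the technical heart of the argument; the rest is a faithful adaptation of the Stefan–Sussmann gluing construction recalled in Section \ref{sec:review_int_mflds_fol}.
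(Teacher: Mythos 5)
There is a genuine gap, and it sits exactly where the paper's definition of maximality differs from the boundaryless one. Maximality of $L$ requires that \emph{every} integral manifold with boundary $H$ of $D$ meeting $L$ — including non-neat ones, whose boundary points may lie in $\interior M$ as in Example \ref{necessaryweaken} — be a submanifold with boundary of $L$. Your step four asserts this containment "again uses weak embeddedness," but weak embeddedness only converts an already-known set containment $\text{Pts}(H) \subset \text{Pts}(L)$ into smoothness of the induced map $H \to L$; it cannot produce the containment itself. Since your $L$ is a union of \emph{neat} integrals, a boundary point $x \in \partial H$ with $x \in \interior M$ (an "artificial" boundary point) belongs to no obvious member of your family, and nothing in your construction places it in $L$. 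The missing idea is a flow argument: take an inward-pointing $u \in T_xH$, push it to $v \in D_x$, extend $v$ to a $D$-section $X$, and compare the integral curve of $X$ inside $H$ (defined on $[0,\epsilon)$, immediately entering $\interior H$) with the integral curve inside a neat integral manifold through $x$; uniqueness of integral curves in $M$ forces them to coincide, so that neat integral meets $\interior H$ and chains $x$ into $L$. This is precisely what the paper's Proposition \ref{larger} does (run inside the boundaryless double), and without it your argument only establishes maximality against neat competitors, which is strictly weaker than the theorem.

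Two further points. First, the same misuse of weak embeddedness occurs in your gluing step: to show two neat integrals $K_1,K_2$ through $q$ overlap in a set open in each, you cannot apply weak embeddedness to the inclusion $K_1 \to M$, because its image is not contained in $K_2$; what is actually needed is local uniqueness of integral manifolds in neatly adapted charts (Theorem \ref{Normal Stefan-Sussmann}, S4), which shows both $K_i$ are, near $q$, open subsets of the plaque through $q$ — and one also needs second countability of the glued union (countably many plaques per chart), a classical subtlety your sketch does not address. Second, there is a circularity hazard: in the paper, Proposition \ref{normal dist weak embedd} is itself proved from Lemma \ref{existsmax}, which is essentially the existence-plus-neatness half of the present theorem, so invoking it as an input requires first supplying an independent proof (say, directly from neatly adapted charts). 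By contrast, the paper sidesteps all gluing: it uses the collar from Theorem \ref{Normal Stefan-Sussmann} (S3) to extend $D$ to an integrable distribution $\tilde{D}$ on an open subset $\tilde{M}$ of the double $\dble$, invokes the boundaryless Theorem \ref{boundarylessmaximal} there, and pulls the maximal leaf back through $\tilde{\iota}$ (Lemma \ref{final work}), so that all manifold-structure and countability issues are inherited from the known boundaryless theory rather than rebuilt by hand.
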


Our notion of foliation by manifolds with boundary must be combinatorially modified as compared to the notion of foliation. This is so that the leaves in charts have their boundaries along the boundary $\partial \mathbb{H}^n$ of the half-space $\mathbb{H}^n$. This necessity is similar to the modification of ``adapted chart" found in \cite{Part1}.

Set $n = \dim M$. A \textbf{\emph{neat foliation with boundaries}} $\mathcal{F}$ is a partition of $M$ by weakly embedded submanifolds with boundary such that $\partial L \subset \partial M$ for all $L \in \mathcal{F}$ and for every $p \in M$ there exists a chart $(U,\varphi)$ about $p$ satisfying the following (where $R$ denotes the dimension of the unique $K \in \mathcal{F}$ containing $p$).

\begin{enumerate}
  \item $\varphi(p) = 0$ and $\varphi(U) = (-\epsilon,\epsilon)^n$ for some $\epsilon > 0$ if $p \in \interior M$ and $\varphi(U) = (-\epsilon,\epsilon)^n \cap \mathbb{H}^n$ for some $\epsilon > 0$ otherwise.
  \item For all $L \in \mathcal{F}$, all coordinate vector fields $\partial_i$ with $n-R+1 \leq i \leq n$, all integral curves of $\partial_i$ intersecting $L$ are contained by $L$.
\end{enumerate}

Theorem \ref{normal dist maximal} along with the Stefan-Sussmann theorem for normal distributions in \cite{Part1} give an analogue of Proposition \ref{dist and foliation correspondence} on the correspondence between integrable distributions and foliations. This is the second main result in this paper. The statement is as follows.

\begin{proposition}\label{neat foliations}
Let $M$ be a manifold with boundary. The assignment of the family of maximal integral manifolds with boundary to a neatly integrable normal distribution defines a bijection between neatly integrable normal distributions and neat foliations by boundaries on $M$. Integrable normal distributions of constant rank $r$ thereby correspond to neat foliations of constant dimension $r$.
\end{proposition}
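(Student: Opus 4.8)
The plan is to prove Proposition \ref{neat foliations} by showing the assignment is well-defined and bijective, mirroring the boundaryless proof of Proposition \ref{dist and foliation correspondence} but carefully respecting the boundary structure. First I would verify the assignment lands in neat foliations. Given a neatly integrable normal distribution $D$, Theorem \ref{normal dist maximal} guarantees a unique maximal integral manifold with boundary through each point, so the family $\mathcal{F}$ of these partitions $M$. Each leaf is weakly embedded by Proposition \ref{normal dist weak embedd} and neat since maximal integral manifolds are neat by Theorem \ref{normal dist maximal}, giving $\partial L \subset \partial M$. The substantive work is checking the two chart conditions in the definition of neat foliation. Here I would invoke the Stefan–Sussmann theorem for normal distributions from \cite{Part1} to produce, about each $p$, an adapted chart (with the half-space model when $p \in \partial M$) in which $D$ is spanned by the last $R$ coordinate vector fields $\partial_{n-R+1},\dots,\partial_n$; condition (1) on the chart image is the normalization of such a chart, and condition (2) follows because integral curves of these $\partial_i$ are integral manifolds of $D$, hence locally lie in a single leaf, and maximality propagates this containment to the whole leaf.

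Next I would establish injectivity. If two neatly integrable normal distributions $D, D'$ yield the same foliation $\mathcal{F}$, then at each $p$ the common leaf $L$ through $p$ satisfies $D_p = T_pL = D'_p$ (identifying tangent spaces via the inclusion), since a leaf is simultaneously an integral manifold of both distributions and integral manifolds recover the distribution pointwise as $Tl|_p(T_pL) = D_p$. Thus $D = D'$ and the map is injective.

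For surjectivity I would start from an arbitrary neat foliation $\mathcal{F}$ by manifolds with boundary and reconstruct a distribution. Define $D_p = T_pL$ (pushed forward by the leaf inclusion) where $L$ is the leaf through $p$; the chart conditions ensure $D$ is a genuine distribution — condition (2) guarantees that the coordinate fields $\partial_{n-R+1},\dots,\partial_n$ are $D$-sections locally spanning $D$, so every vector in $D_p$ extends to a $D$-section, and $D$ is normal because, at a boundary point, one of these spanning fields (arising from the half-space chart) is transverse to $\partial M$. The leaves are then integral manifolds with boundary of $D$, so $D$ is neatly integrable, and one must check that the maximal integral manifolds of this $D$ are precisely the leaves of $\mathcal{F}$. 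This last verification — that the assignment recovers $\mathcal{F}$ — uses maximality of leaves together with the uniqueness in Theorem \ref{normal dist maximal}: each leaf is an integral manifold contained in a unique maximal one, and the weak-embeddedness together with the partition property forces equality. The constant-rank addendum follows immediately, since constant rank $r$ of $D$ corresponds to every leaf having the same dimension $r$.

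I expect the main obstacle to be the surjectivity step, specifically verifying that the reconstructed $D$ is normal and that the leaves of an abstractly given neat foliation coincide with the maximal integral manifolds of $D$. The delicate point is the interplay at the boundary: one must confirm that the half-space chart condition (1) combined with $\partial L \subset \partial M$ forces the spanning coordinate field transverse to $\partial M$ to actually lie in $D$, so that normality is genuinely inherited rather than merely assumed. Establishing that the partition by leaves agrees with the partition by maximal integral manifolds will require careful use of uniqueness (Theorem \ref{normal dist maximal}) and the weak-embedding characteristic property (Proposition \ref{normal dist weak embedd}) to rule out a leaf being strictly contained in a larger maximal integral manifold.
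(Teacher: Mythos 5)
Your overall architecture is the same as the paper's: form the partition by maximal integral manifolds with boundary (Theorem \ref{normal dist maximal}, Proposition \ref{normal dist weak embedd}), verify the chart conditions using the neatly adapted charts from Theorem \ref{Normal Stefan-Sussmann} together with the maximality argument for integral curves, reconstruct $D_{\mathcal{F}}$ as the tangents to leaves for the converse direction, and check that the two constructions invert one another. The well-definedness and reconstruction steps of your plan are sound and match the paper essentially verbatim; your explicit injectivity check is correct but trivial, and the paper leaves it implicit, since $D$ is recovered pointwise from the tangent spaces of the leaves of $\mathcal{F}_D$.

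However, the mechanism you propose for the step you yourself flag as the main obstacle --- showing that each leaf of an abstractly given neat foliation $\mathcal{F}$ equals the maximal integral manifold of $D_{\mathcal{F}}$ containing it --- is insufficient as stated. You claim that ``the weak-embeddedness together with the partition property forces equality,'' but weak embeddedness only makes a leaf $K$ an immersed codimension-$0$ submanifold with boundary of the maximal integral manifold $L$, and Example \ref{necessaryweaken} shows precisely that such a containment need not be open (nor an equality): boundaries permit codimension-$0$ integral manifolds with boundary that sit strictly and non-openly inside others, so the partition property alone cannot conclude. The missing idea is Lemma \ref{neatintsareweak}: because both $K$ and $L$ are \emph{neat}, Proposition \ref{interior} gives $\partial K = \partial M \cap K$ and $\partial L = \partial M \cap L$, hence $\partial K \subset \partial L$, and then the codimension-$0$ result (Proposition \ref{codim 0 bdry}) upgrades the immersed inclusion to an open one. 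With openness in hand, the leaves of $\mathcal{F}$ meeting $L$ partition the connected manifold $L$ into open submanifolds with boundary, so there is exactly one such leaf and it equals $L$. In short, it is neatness of the leaves --- not weak embeddedness or the uniqueness clause of Theorem \ref{normal dist maximal} --- that rules out a leaf being strictly contained in a maximal integral manifold, and connectedness of $L$ that finishes the argument; without this your surjectivity step does not close.
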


We will now prove the main results.

\section{Proofs of the main results}
\label{sec:Proof}

Our terminology for the kinds of submanifolds with boundary is in accordance with Lee's book \cite{Lee} and Ruldolph and Schmidt's book \cite{RudolphSchmidt}. In addition to this, we establish some systematic notation for use in the proof.

Let $A,B,M,N$ be manifolds with boundary and $f : M \to N$ be a map between manifolds with boundary. 
\begin{enumerate}
  \item If $A \subset M$, the restriction is denoted $f|_A : A \to N$ as usual.
  \item If $f(M) \subset B$, we will write the induced map as $f|^B : M \to B$. We call $f|^B$ the \textbf{\emph{co-restriction of $\bm{f}$ to $\bm{B}$}} if $B \subset N$ and call it the \textbf{\emph{co-extension of $\bm{f}$ to $\bm{B}$}} if $B \supset Y$. If $f(A) \subset B$, we define $f|_A^B = (f|_A)^B$ for succinctness.
  \item If $A$ is a submanifold with boundary in $M$ and $f$ is an injective immersion and $B$ is the unique submanifold with boundary in $N$ such that $f|_A^B$ is a diffeomorphism, we denote $f(A) = B$.
  \item We will denote by $\text{Pts}(A)$ the set of points of $A$ when it is necessary for distinction.
\end{enumerate}

The main step in establishing the results of this paper is to use the Stefan-Sussmann theorem in \cite{Part1} to extend the neatly integrable normal distribution into an integrable distribution on a neighbourhood of the (boundaryless) double (see definition \ref{collar extension}). We then apply the established theory without boundary to this distribution. We will now construct this distribution.

\subsection{Extension into the double}

In view the Stefan-Sussmann theorem in \cite{Part1}, we formulate the extension for collared normal distributions.

\begin{definition}[Collar extension]\label{collar extension}
Let $M$ be a manifold with (nonempty) boundary and $D$ be a normal distribution on $M$ with a collar $\Sigma : \partial M \times [0,1) \to M$.

Set $M \sqcup M = \{-1,1\} \times M$ and write $\iota_{-1},\iota_{1} : M \to M \sqcup M$ for the natural embeddings. Denote by $\dble$ the boundaryless double of $M$ where $\Pi : M \sqcup M \to \dble$ is the quotient map (see \cite[Theorem 9.29 and Example 9.32]{Lee}). The map $\Sigma_D : \partial M \times (-1,1) \to \dble$ given by $\Sigma_D(p,t) = \Pi(\iota_{\text{sgn}(t)}(\Sigma(p,|t|)))$ is an embedding. The map $\iota_D = \Pi \circ \iota_1$ is also an embedding.

Let $V_D = \Sigma_D(\partial M \times (-1,1))$ (an open submanifold of $\dble$ containing $\iota_D(\partial M)$ in $\dble$) and set $\sigma_D = \Sigma_D|^{V_D}$ (a diffeomorphism). We then obtain the open submanifold $\tilde{M} = \iota_D(\interior M) \cup V_D$ of $\dble$ containing $\iota_D(M)$. We hence obtain the embeddings $\tilde{\Sigma} = \Sigma_D|^{\tilde{M}}$ and $\tilde{\iota} = \iota_D|^{\tilde{M}}$. Define the following subsets of $T\tilde{M}$.
\begin{equation}\label{doubledist}
\begin{split}
\tilde{D}_1 &\coloneqq T\tilde{\iota}(D)\\
\tilde{D}_2 &\coloneqq \cup_{(p,t) \in \partial M \times (-1,1)}T\tilde{\Sigma}(\cdot,t)|_p((\jmath^*D)_p) + T\tilde{\Sigma}(p,\cdot )|_t(T_t(-1,1))\\
\tilde{D} &\coloneqq \tilde{D}_1 \cup \tilde{D}_2. 
\end{split}
\end{equation}
We call $\tilde{D}$ the \textbf{\emph{collar extension of $\bm{D}$}}.
\end{definition}

We will now develop the fundamental properties of the collar extension which give it its name. To this end, let $M$ be manifold with (nonempty) boundary and $D$ be a normal distribution on $M$ with a collar $\Sigma : \partial M \times [0,1) \to M$ and $\tilde{D}$ be the collar extension.

\begin{lemma}\label{doubleextend}
For all $(p,t) \in \partial M \times (-1,1)$, $\tilde{D}_{\tilde{\Sigma}(p,t)} = (\tilde{D}_2)_{\tilde{\Sigma}(p,t)}$ and, if $t \geq 0$, $(\tilde{D}_1)_{\tilde{\Sigma}(p,t)} = (\tilde{D}_2)_{\tilde{\Sigma}(p,t)}$. For $q \in \tilde{\iota}(M)$, $\tilde{D}_q = (\tilde{D}_1)_q$. 
\end{lemma}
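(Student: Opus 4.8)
The plan is to unwind the definitions in \eqref{doubledist} and check the three asserted equalities pointwise, treating the interior and collar portions of $\tilde{M}$ separately. The key observation to establish first is that $\tilde{M}$ decomposes as the union of the open set $\tilde{\iota}(\interior M)$ and the open collar neighbourhood $V_D = \tilde{\Sigma}(\partial M \times (-1,1))$, and these overlap precisely on $\tilde{\Sigma}(\partial M \times (0,1)) = \tilde{\iota}(\interior M) \cap V_D$ (the image of the open collar into the interior of the first copy of $M$). Since $\tilde{D}_1$ is defined only via $T\tilde{\iota}$ and $\tilde{\iota}$ is an embedding with image $\tilde{\iota}(M)$, the fibre $(\tilde{D}_1)_q$ is nonempty exactly when $q \in \tilde{\iota}(M)$; dually, $\tilde{D}_2$ is supported on $V_D$. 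This bookkeeping of supports is what drives all three claims.

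For the first claim, fix $(p,t) \in \partial M \times (-1,1)$ and write $q = \tilde{\Sigma}(p,t)$. If $t < 0$ then $q \notin \tilde{\iota}(M)$, so $(\tilde{D}_1)_q = \varnothing$ and hence $\tilde{D}_q = (\tilde{D}_2)_q$ trivially. If $t \geq 0$ then $q$ lies in the overlap, and I would show directly that the two fibres coincide. This is the heart of the lemma: I must verify $(\tilde{D}_1)_q = (\tilde{D}_2)_q$ for $t \geq 0$. The containment that requires care is $(\tilde{D}_1)_q \subseteq (\tilde{D}_2)_q$, which amounts to showing that $T\tilde{\iota}(D_r)$, where $r = \Sigma(p,t)$ is the corresponding point of $M$, is captured by the collar formula. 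Here I would use that $D$ is \textbf{normal} and that $\Sigma$ is a collar compatible with $D$: the collar direction $\partial_t$ maps under $T\tilde{\Sigma}(p,\cdot)$ to a vector transverse to $\tilde{\iota}(\partial M)$ that lies in $\tilde{D}_2$, while the tangential part is handled by the pullback $\jmath^* D$ (the restriction of $D$ to $\partial M$). Decomposing an arbitrary $v \in D_r$ into its component along the collar flow and its component tangent to the $\partial M$-slice, and matching each piece against the two summands in the definition of $\tilde{D}_2$, gives the inclusion; the reverse inclusion follows because each summand of $\tilde{D}_2$ is by construction the image under $T\tilde{\iota}$ of a vector in $D_r$, using that $\tilde{\Sigma}(\cdot,t)$ and $\tilde{\iota}$ agree on the relevant slice for $t \geq 0$. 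Once $(\tilde{D}_1)_q = (\tilde{D}_2)_q$ is known for $t\geq 0$, the identity $\tilde{D}_q = (\tilde{D}_2)_q = (\tilde{D}_1)_q$ is immediate from $\tilde{D} = \tilde{D}_1 \cup \tilde{D}_2$.

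For the final claim, take $q \in \tilde{\iota}(M)$ and show $\tilde{D}_q = (\tilde{D}_1)_q$. The nontrivial content is that $(\tilde{D}_2)_q \subseteq (\tilde{D}_1)_q$ whenever both are nonempty, i.e. whenever $q \in \tilde{\iota}(M) \cap V_D = \tilde{\Sigma}(\partial M \times [0,1))$; but for such $q$ we are in the $t \geq 0$ regime already handled, so $(\tilde{D}_2)_q = (\tilde{D}_1)_q$ and the union collapses to $(\tilde{D}_1)_q$. For $q \in \tilde{\iota}(\interior M) \setminus V_D$ the fibre $(\tilde{D}_2)_q$ is empty and $\tilde{D}_q = (\tilde{D}_1)_q$ trivially.

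The main obstacle I anticipate is the $t \geq 0$ fibre equality $(\tilde{D}_1)_q = (\tilde{D}_2)_q$, and specifically keeping the identifications straight across the double: I must be careful that on the overlap $\tilde{\Sigma}(p,t)$ with $t > 0$ genuinely equals $\tilde{\iota}(\Sigma(p,t))$ as points of $\dble$, so that the tangent maps $T\tilde{\Sigma}(\cdot,t)|_p$ and $T\tilde{\iota}$ are comparable and the normality of $D$ supplies exactly the transverse collar direction needed to reconcile the two descriptions of the fibre. The remaining verifications are routine support-counting, but this compatibility on the overlap is where the definition of the collar extension, and the hypothesis that $D$ is normal, must be used in an essential way.
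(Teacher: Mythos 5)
Your proposal is correct and follows essentially the same route as the paper: the $t<0$ and $q\notin V_D$ cases are handled by the same support bookkeeping, and the key $t\geq 0$ fibre equality $(\tilde{D}_1)_{\tilde{\Sigma}(p,t)}=(\tilde{D}_2)_{\tilde{\Sigma}(p,t)}$ is obtained, exactly as in the paper, from the collar-compatibility decomposition $D_{\Sigma(p,t)} = T\Sigma(\cdot,t)|_p((\jmath^*D)_p) + T\Sigma(p,\cdot)|_t(T_t[0,1))$ together with the identity $\tilde{\Sigma}\circ\iota = \tilde{\iota}\circ\Sigma$ on $\partial M\times[0,1)$. The only cosmetic difference is that you argue by two inclusions while the paper pushes $T\tilde{\iota}$ through the decomposition in a single chain of equalities; also note that the ingredient doing the work is the collaredness of $D$ (normality only guarantees such a collar can exist), a distinction your write-up slightly blurs.
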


\begin{proof}
Set $\iota : \partial M \times [0,1) \subset \partial M \times (-1,1)$. We have the relation $\tilde{\Sigma} \circ \iota = \tilde{\iota} \circ \Sigma$. Hence, for $(p,t) \in \partial M \times [0,1)$, we have,
\begin{equation*}
\begin{split}
(\tilde{D}_1)_{\tilde{\Sigma}(p,t)} &= T\tilde{\iota}(D_{\Sigma(p,t)})\\
&= T\tilde{\iota}(T\Sigma(\cdot,t)|_p((\jmath^*D)_p) + T\Sigma(p,\cdot )|_t(T_t[0,1)))\\
&= T(\tilde{\iota} \circ \Sigma)(\cdot,t)|_p((\jmath^*D)_p) + T(\tilde{\iota} \circ \Sigma)(p,\cdot )|_t(T_t[0,1))\\
&= T(\tilde{\Sigma} \circ \iota)(\cdot,t)|_p((\jmath^*D)_p) + T(\tilde{\Sigma} \circ \iota)(p,\cdot )|_t(T_t[0,1))\\
&= T\tilde{\Sigma}(\cdot,t)|_p((\jmath^*D)_p) + T\tilde{\Sigma}(p,\cdot )|_t(T_t(-1,1))\\
&= (\tilde{D}_2)_{\tilde{\Sigma}(p,t)}.
\end{split}
\end{equation*}

With this, for $(p,t) \in \partial M \times (-1,1)$, if $t \geq 0$, then we have from the above that,
\begin{equation*}
\tilde{D}_{\tilde{\Sigma}(p,t)} = (\tilde{D}_1)_{\tilde{\Sigma}(p,t)} \cup (\tilde{D}_2)_{\tilde{\Sigma}(p,t)} = (\tilde{D}_2)_{\tilde{\Sigma}(p,t)} \cup (\tilde{D}_2)_{\tilde{\Sigma}(p,t)} = (\tilde{D}_2)_{\tilde{\Sigma}(p,t)}.
\end{equation*}

If instead $t < 0$, then $\tilde{\Sigma}(p,t) \notin \tilde{\iota}(M)$ so that $(\tilde{D}_1)_{\tilde{\Sigma}(p,t)} = \emptyset$ and hence,
\begin{equation*}
\tilde{D}_{\tilde{\Sigma}(p,t)} = (\tilde{D}_1)_{\tilde{\Sigma}(p,t)} \cup (\tilde{D}_2)_{\tilde{\Sigma}(p,t)} = \emptyset \cup (\tilde{D}_2)_{\tilde{\Sigma}(p,t)} = (\tilde{D}_2)_{\tilde{\Sigma}(p,t)}.
\end{equation*}

Now, for $q \in \tilde{\iota}(M)$, if $q \in V_D$, then $q = \Sigma(p,t)$ for some $(p,t) \in \partial M \times (-1,1)$ with $t \geq 0$. Hence, from the above, we get that $\tilde{D}_q = (\tilde{D}_2)_q = (\tilde{D}_1)_q$. If instead $q \notin V_D$, then $(\tilde{D}_2)_q = \emptyset$ so that,
\begin{equation*}
\tilde{D}_{q} = (\tilde{D}_1)_{q} \cup (\tilde{D}_2)_{q} = (\tilde{D}_1)_{q} \cup \emptyset = (\tilde{D}_1)_{q}.
\end{equation*}
\end{proof}

\begin{remark}\label{distislocalproperty}
As mentioned in \cite{Part1}, we have the following. Let $M$ be a manifold with boundary and let $D \subset TM$ be a subset. If for all $p \in M$, there exists an open neighbourhood $U$ of $p$ for which $\iota^*D$ is a distribution on $U$ ($\iota : U \subset M$), then $D$ is a distribution on $M$.
\end{remark}

\begin{lemma}
The subset $\tilde{D}$ is a distribution on $\tilde{M}$.
\end{lemma}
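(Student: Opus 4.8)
The plan is to localise the claim and then transport already-understood distributions across the diffeomorphisms built into the collar extension. By Remark~\ref{distislocalproperty}, it suffices to produce, for each $q \in \tilde{M}$, an open neighbourhood $U$ of $q$ (with inclusion $\iota : U \subset \tilde{M}$) on which $\iota^*\tilde{D}$ is a distribution. Since $\tilde{M} = \tilde{\iota}(\interior M) \cup V_D$, and both $\tilde{\iota}(\interior M)$ (the image of the interior, which is open in $\dble$) and $V_D$ (open by construction) are open in $\tilde{M}$, these two sets form an open cover and I would treat each separately.

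On $\tilde{\iota}(\interior M)$, Lemma~\ref{doubleextend} gives $\tilde{D} = \tilde{D}_1 = T\tilde{\iota}(D)$, so the restriction of $\tilde{D}$ to this set is the image of the distribution $D|_{\interior M}$ under the diffeomorphism $\tilde{\iota}|_{\interior M} : \interior M \to \tilde{\iota}(\interior M)$. Since a diffeomorphism carries a distribution to a distribution (fibres map to subspaces, and the pushforward of a $D$-section through a vector $v$ is a section through $T\tilde{\iota}(v)$), this case is immediate. On $V_D$, Lemma~\ref{doubleextend} gives $\tilde{D} = \tilde{D}_2$. Unwinding \eqref{doubledist} and identifying $T_{(p,t)}(\partial M \times (-1,1)) = T_p\partial M \oplus T_t(-1,1)$, one reads off $(\tilde{D}_2)_{\tilde{\Sigma}(p,t)} = T\tilde{\Sigma}|_{(p,t)}\big((\jmath^*D)_p \oplus T_t(-1,1)\big)$, that is, $\tilde{D}_2 = T\tilde{\Sigma}(\hat{D})$ for the subset $\hat{D} \subset T(\partial M \times (-1,1))$ with fibres $\hat{D}_{(p,t)} = (\jmath^*D)_p \oplus T_t(-1,1)$. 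As $\tilde{\Sigma}$ co-restricts to the diffeomorphism $\sigma_D$ onto $V_D$, it again remains only to show $\hat{D}$ is a distribution on $\partial M \times (-1,1)$ and push it forward.

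For that last step there are two ingredients: that $\jmath^*D$ is a distribution on $\partial M$, and that adjoining the full tangent line $T(-1,1)$ preserves the distribution property. The latter, and hence the spanning condition for $\hat{D}$, is routine: given $(v, c\,\partial_t|_t) \in \hat{D}_{(p,t)}$, one takes a $\jmath^*D$-section $Y$ through $v$ and forms $Y + c\,\partial_t$. The genuine content is that $\jmath^*D$ is a distribution on $\partial M$, and I expect this to be the main obstacle. For a general distribution the pullback to a submanifold need not be a distribution (the spanning-by-sections condition can fail even though the fibres are subspaces), and it is precisely normality of $D$ — a vector of $D$ transverse to $\partial M$ at each boundary point — that rescues it. I would invoke the corresponding result of \cite{Part1} for this fact, and then assemble the two cases via Remark~\ref{distislocalproperty} to conclude that $\tilde{D}$ is a distribution on $\tilde{M}$.
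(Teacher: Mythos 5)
Your proof is correct and takes essentially the same approach as the paper: cover $\tilde{M}$ by the two open sets $\tilde{\iota}(\interior M)$ and $V_D$, use Lemma \ref{doubleextend} to identify $\tilde{D}$ there with $\tilde{D}_1$ and $\tilde{D}_2$ respectively, and conclude via Remark \ref{distislocalproperty}. The paper's proof is merely terser, leaving implicit the details you spell out — the diffeomorphism pushforward, the product-with-$T(-1,1)$ construction, and the fact from \cite{Part1} that normality of $D$ makes $\jmath^*D$ a distribution on $\partial M$.
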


\begin{proof}
From Lemma \ref{doubleextend}, we have that $\tilde{D}_q = (\tilde{D}_1)_q$ for all $q \in \tilde{\iota}(\interior M)$ which shows that $\tilde{D}$ is a distribution when pulled back to $\tilde{\iota}(\interior M)$. Likewise, from Lemma \ref{doubleextend}, $(\tilde{D})_q = (\tilde{D}_2)_q$ for all $q \in V_D$ which shows that $\tilde{D}$ is a distribution when pulled back to $V_D$. Hence, from Remark \ref{distislocalproperty}$, \tilde{D}$ is a distribution.
\end{proof}

\subsection{Preliminary observations}

To establish these observations, we will need the vector on the boundary trichotomy from \cite[Proposition 5.41]{Lee}. 

Let $M$ be a manifold with boundary. Let $v \in TM$ be a vector on the boundary $\partial M$. If $v$ is transverse to $\partial M$, following Lee \cite{Lee}, $v$ is said to be:
\begin{enumerate}
    \item \textbf{\emph{inward-pointing}} if there exists a curve $\gamma : [0,\epsilon) \to M$ for some $\epsilon > 0$ such that $\gamma'(0) = v$;
    \item \textbf{\emph{outward-pointing}} if there exists a curve $\gamma : (-\epsilon,0] \to M$ for some $\epsilon > 0$ such that $\gamma'(0) = v$.
\end{enumerate}

The following is Proposition 5.41 from Lee's book \cite{Lee}.

\begin{proposition}\label{trichotomy}
For $p \in \partial M$, $T_pM$ is partitioned by the set of tangent vectors, the set of inward-pointing vectors and the set of outward-pointing vectors.
\end{proposition}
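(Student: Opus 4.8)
The plan is to prove Proposition~\ref{trichotomy}, which asserts that for $p \in \partial M$, the tangent space $T_pM$ is partitioned into the tangent vectors (those lying in $T_p(\partial M)$), the inward-pointing vectors, and the outward-pointing vectors. Since this is Proposition~5.41 from Lee's book, I would reconstruct the standard proof working entirely in a boundary chart. First I would fix a smooth chart $(U,\varphi)$ about $p$ with $\varphi(U) \subset \mathbb{H}^n = \{x^n \geq 0\}$ and $\varphi(p)=0$, which identifies $T_pM$ with $\mathbb{R}^n$ via the coordinate basis $\partial_1,\dots,\partial_n$. Writing a vector as $v = \sum_i v^i \partial_i$, the plan is to show that the sign of the last component $v^n$ (relative to the outward direction $x^n \searrow 0$) governs which of the three classes $v$ belongs to: $v^n = 0$ corresponds to tangency to $\partial M$, $v^n > 0$ to inward-pointing, and $v^n < 0$ to outward-pointing.

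The heart of the argument is to establish the three inclusions and their mutual exclusivity. For the tangency case I would note that $T_p(\partial M)$ is precisely the span of $\partial_1,\dots,\partial_{n-1}$, i.e.\ exactly the vectors with $v^n = 0$. For the inward case, given $v$ with $v^n > 0$, I would exhibit the required curve by pushing forward a straight-line path: define $\gamma(s) = \varphi^{-1}(s\,\varphi_*(v))$ for $s \in [0,\epsilon)$, and check that $\gamma$ lands in $U$ (hence in $M$) for small $\epsilon$ because the last coordinate $s\,v^n \geq 0$ keeps the image inside $\mathbb{H}^n$, while $\gamma'(0) = v$ by construction. The outward case is symmetric, using $s \in (-\epsilon,0]$. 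Conversely, to see these characterizations are sharp I would argue that if a curve $\gamma:[0,\epsilon)\to M$ has $\gamma'(0)=v$, then the last coordinate $(\varphi\circ\gamma)^n(s) \geq 0$ is a smooth function vanishing at $s=0$, so its one-sided derivative $v^n$ is $\geq 0$; combined with transversality ($v \notin T_p\partial M$, i.e.\ $v^n \neq 0$) this forces $v^n > 0$, pinning inward-pointing vectors to exactly the positive half. The partition then follows from the trichotomy $v^n > 0$, $v^n = 0$, or $v^n < 0$ on the real number $v^n$.

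The main obstacle, and the only genuinely non-trivial point, is verifying that these notions are well-defined independent of the chosen boundary chart; this is where one must invoke that transition maps between boundary charts preserve $\mathbb{H}^n$ and hence restrict to diffeomorphisms of $\partial\mathbb{H}^n$, so that the sign of $v^n$ is chart-independent. I would handle this by observing that the differential of a transition map at a boundary point carries $T_p\partial M$ to $T_p\partial M$ and maps the open inward half-space to itself (since the transition map sends interior points to interior points and boundary points to boundary points), which preserves the sign of the outward normal component. Once chart-independence is secured, the curve constructions and the one-sided derivative estimate are routine, and the three sets are manifestly disjoint and exhaustive. Since this is a cited result, I would keep the exposition brief and defer to Lee~\cite{Lee} for the full verification of the invariance claim.
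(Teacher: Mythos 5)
The paper gives no proof of this proposition at all: it is quoted verbatim as Proposition 5.41 of Lee's book, and your reconstruction is precisely the standard argument behind that citation, carried out correctly (the straight-line curve $s \mapsto \varphi^{-1}(s\,\varphi_*(v))$ for the two curve constructions, and the one-sided derivative estimate for the converse). One clarification on your framing: since the definitions of inward- and outward-pointing in force here are curve-based and hence chart-free, the chart-independence you call ``the main obstacle'' is not actually a prerequisite --- your fixed-chart argument already yields the full partition ($v^n = 0$ iff $v \in T_p\partial M$; $v^n > 0$ iff inward-pointing, since the curve construction gives one direction and the estimate $v^n \geq 0$ plus transversality gives the other; symmetrically $v^n < 0$ iff outward-pointing), so the invariance of the sign of $v^n$ under boundary-chart transitions is a corollary of these equivalences rather than something that must be verified first, and deferring it to Lee costs you nothing.
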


Our first observation is the compatibility between the notions of ``maximal". This is established by Proposition \ref{trichotomy} and Proposition \ref{weakly embedded compatibility} in the appendix.

\begin{proposition}\label{larger}
Let $D$ be an integrable distribution on a manifold $M$. Let $L$ be a maximal integral manifold of $D$. Then, $L$ is a maximal integral manifold with boundary of $D$.
\end{proposition}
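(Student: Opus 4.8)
The plan is to reduce everything to the local plaque structure of the foliation $\mathcal{F}$ associated to $D$ by Proposition \ref{dist and foliation correspondence}, for which $L$ is a leaf, and then to transfer the analysis into $L$ by way of its weak embeddedness. Fix an integral manifold with boundary $K$ of $D$ sharing a point with $L$, write $l : K \subset M$ and $i : L \subset M$ for the inclusions. Since $Tl|_p$ is a linear isomorphism onto $D_{l(p)}$, the rank of $D$ equals $\dim K$ along $l(K)$ and equals $\dim L$ along $L$; as $K$ and $L$ are connected and meet, $\dim K = \dim L =: r$. Note also that $L$, having empty boundary, is trivially an integral manifold with boundary of $D$, so the statement to prove is exactly that each such $K$ is a submanifold with boundary of $L$.

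First I would establish the set-theoretic containment $\text{Pts}(K) \subset \text{Pts}(L)$ by an open--closed argument on the connected space $K$. Let $W = \{p \in K : l(p) \in \text{Pts}(L)\}$, which is nonempty. Around any $p \in K$ choose a chart $(U,\varphi)$ centered at $l(p)$ as in the definition of foliation; since the rank of $D$ along $l(K)$ is $r$, the centered dimension is $R = r$ and $\partial_1,\dots,\partial_r$ span $D$ at every point of $l(K) \cap U$. Tangency of $K$ to $D$ then forces the last $n-r$ coordinates of $\varphi \circ l$ to be locally constant, so a small connected neighborhood $K_p$ of $p$ in $K$ is carried by $l$ into a single $r$-slice $S_p = \{x_{r+1} = c_{r+1}, \dots, x_n = c_n\}$. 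Because $L$ is invariant under the flows of $\partial_1,\dots,\partial_r$, any slice meeting $L$ lies entirely in $L$. If $p \in W$ then $S_p \subset L$ and hence $K_p \subset W$, giving openness; if $p \in \overline{W}$, then some $p' \in W \cap K_p$ places $l(p')$ in $S_p \cap L$, whence $S_p \subset L$ and $l(p) \in L$, giving closedness. Therefore $W = K$.

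With $\text{Pts}(K) \subset \text{Pts}(L)$ in hand and $L$ weakly embedded by Proposition \ref{boundarylessweakembedded}, I would invoke the compatibility statement Proposition \ref{weakly embedded compatibility} --- in the version permitting the domain $K$ to be a manifold with boundary --- to co-restrict $l$ to a smooth map $\hat{l} = l|^L : K \to L$ with $l = i \circ \hat{l}$. Injectivity of $l$, together with injectivity of $Tl|_p = Ti \circ T\hat{l}|_p$, makes $\hat{l}$ an injective immersion, and since $\dim K = \dim L = r$ it is a local diffeomorphism at every interior point of $K$; thus $\hat{l}$ restricts to an open embedding of $\interior K$ onto an open subset of $L$.

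The remaining and most delicate point is the behaviour at $\partial K$, where $K$ carries boundary although $L$ does not. For $p \in \partial K$ the isomorphism $T\hat{l}|_p : T_pK \to T_{\hat{l}(p)}L$ transports the tangent/inward/outward trichotomy of Proposition \ref{trichotomy} for $K$ at $p$ onto $T_{\hat{l}(p)}L$, organized relative to $\hat{l}(\partial K)$; combined with the inverse function theorem for manifolds with boundary this exhibits $\hat{l}$ as a diffeomorphism from a half-ball neighborhood of $p$ onto a half-space slice of a coordinate ball in $L$, with $\hat{l}(\partial K)$ as the separating hypersurface. Together with the interior case and the injectivity of $\hat{l}$, this realizes $\hat{l}(K)$ as an embedded submanifold with boundary of $L$, which is precisely maximality of $L$ in the category of manifolds with boundary. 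I expect this last step --- certifying the half-space model at $\partial K$ inside the boundaryless leaf $L$, where the trichotomy does the essential work --- to be the main obstacle, the earlier containment being a routine plaque argument.
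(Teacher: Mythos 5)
Your argument is correct up to and including the co-restriction step, and at that point the proof is actually finished; it also reaches the conclusion by a genuinely different route than the paper. Where the paper argues dynamically --- at a point $p \in \partial K$ it extends an inward-pointing vector to a $D$-section, compares integral curves in $K$ and in a maximal (boundaryless) integral manifold $I$ through $p$ to show $\interior K$ meets $I$, and then uses Theorem \ref{boundarylessmaximal} to conclude $K \subset I = L$ --- you argue statically through the foliation $\mathcal{F}$ of Proposition \ref{dist and foliation correspondence}: in a foliation chart centred on $l(K)$ the fields $\partial_1,\dots,\partial_r$ span $D$ along $l(K)$, tangency pins $K$ locally to a single slice, flow-invariance makes any slice meeting $L$ lie in $L$, and an open--closed argument on the connected space $K$ gives $\text{Pts}(K) \subset \text{Pts}(L)$. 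This plaque argument is sound (it handles interior and boundary points of $K$ uniformly, since a connected manifold with boundary on which a function has vanishing differential is still a domain of constancy), and it trades the paper's use of the trichotomy and uniqueness of integral curves for the foliation correspondence, which the paper also has on hand. Both routes then coincide: Proposition \ref{boundarylessweakembedded} plus Proposition \ref{weakly embedded compatibility} make $\hat{l} = l|^L : K \to L$ a smooth injective immersion, i.e.\ $K$ is an immersed submanifold with boundary of $L$, which is exactly what the definition of maximal integral manifold with boundary demands.

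Your final step, however, is both unnecessary and false, and you have mislabelled it as ``the main obstacle.'' It is unnecessary because maximality here only requires $K$ to be a submanifold with boundary of $L$ in the immersed sense --- this is precisely the ``slight weakening'' relative to the boundaryless case that the paper announces before stating the definition, and the paper's own proof of this proposition stops at ``$K$ is immersed in $L$.'' It is false because the strengthening you aim for --- that $\hat{l}$ realizes $K$ as an \emph{embedded} submanifold with boundary of $L$ --- is exactly what Example \ref{necessaryweaken} rules out: take $M = L = \mathbb{R}^2$, $D = T\mathbb{R}^2$, and $K = P((0,\delta)\times[0,2\pi))$ with the structure transported by polar coordinates. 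Then $K$ is a connected integral manifold with boundary of $D$ meeting $L$, your local half-ball models at points of $\partial K$ do exist, and $\hat{l}$ is an injective immersion, yet $K$ is not embedded (indeed not even weakly embedded) in $L$, because its manifold topology along the slit $\partial K$ is strictly finer than the subspace topology. The gap is the classical one: injectivity together with local embeddings does not yield a global embedding, and here it genuinely cannot. Delete the last paragraph and your proof stands.
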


\begin{proof}
We will systematically use Theorem \ref{boundarylessmaximal} on the existence of maximal integral manifolds (of $D$).

First, let $K$ be an integral manifold with boundary. Let $p \in \partial K$. Then, take a maximal integral manifold $I$ with $p \in I$. Then, we claim that $\interior K$ and $I$ have a common point.

Indeed, consider an inward-pointing vector $u \in T_pK$. Then, $v = Tl|_p(u) \in D_p$ (where $l : K \subset M$). Take a $D$-section $X$ with $X|_p = v$. Then, take the $l$-related vector field $Y$ on $K$ and $i$-related vector field $Z$ on $I$ (where $i : I \subset M$).

Then, take integral curves $c : [0,\epsilon) \to K$ of $Y$ and $d : (-\epsilon,\epsilon) \to I$ of $Z$ both starting at $p$ (for some $\epsilon>0$). The curves $c|^M$ and $d|^M$ are both integral curves of $X$ on $M$ with $c|^M(0)=d|^M(0)$. Thus, $c|^M = d|^M$ on $[0,\epsilon)$. In particular, since $u$ is inward-pointing, we cannot have $c(t) \in \partial K$ for all $t \in [0,\epsilon)$. That is, there exists a $t \in [0,\epsilon)$ with $c(t) \in \interior K$. With such a $t \in [0,\epsilon)$, we get that,
\begin{equation*}
\interior K \ni c(t) = c|^M(t) = d^M|(t) = d(t) \in I
\end{equation*}
So, $\interior K$ and $I$ have a common point, as claimed. 

Hence, we have by maximality that $\interior K \subset I$ and hence (since $p \in I$) $\interior K \cup \{p\} \subset I$. Hence, for all $p \in \partial K$, there exists a maximal integral manifold $I$ with $\interior K \cup \{p\} \subset I$. However, by maximality, this implies there exists a maximal integral manifold $I$ such that $K \subset I$.

With this, let $K$ be an integral manifold with boundary with a point $p$ in common with $L$. Then, by the above, we may take a maximal integral manifold $I$ with $K \subset I$. Then, since $L$ and $I$ have a common point, by maximality, $I = L$ so that $K \subset L$. Now, from Proposition \ref{weakly embedded compatibility}, since $L$ is weakly embedded in $M$, $L$ is a weakly embedded manifold with boundary in $M$. From this, since $K \subset L$, and $L$ is a weakly embedded submanifold with boundary in $M$, $K$ is immersed in $L$.
\end{proof}

Our two remaining observations are from \cite{Part1}. The first is \cite[Proposition 5.4]{Part1}.

\begin{proposition}\label{interior}
Let $D$ be a normal distribution on a manifold with boundary $M$. If $K$ is an integral manifold with boundary of $D$, then $\interior K \cap \partial M = \emptyset$.
\end{proposition}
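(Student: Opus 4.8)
The plan is to argue by contradiction using the vector trichotomy (Proposition \ref{trichotomy}). Suppose, for a contradiction, that some $p \in \interior K$ has image $q = l(p) \in \partial M$, where $l : K \subset M$ is the inclusion. Since $D$ is normal, there is a vector $v \in D_q$ transverse to $\partial M$. Because $l$ is an immersion and $K$ is an integral manifold with boundary, the linear map $Tl|_p : T_pK \to D_q$ is an isomorphism (it is injective as $l$ is an immersion and onto $D_q$ by the integral condition $Tl|_p(T_pK) = D_q$). Hence there is a unique $u \in T_pK$ with $Tl|_p(u) = v$.

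The key move is to exploit that $p$ is an \emph{interior} point of $K$. Such a point has a neighbourhood in $K$ modelled on an open subset of $\mathbb{R}^{\dim K}$, so every tangent vector at $p$ is the velocity of a two-sided curve. I would therefore choose $\gamma : (-\epsilon,\epsilon) \to K$ with $\gamma(0) = p$ and $\gamma'(0) = u$. Pushing this forward, $c = l \circ \gamma : (-\epsilon,\epsilon) \to M$ is a two-sided curve in $M$ with $c(0) = q \in \partial M$ and $c'(0) = Tl|_p(u) = v$.

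Finally I would read off the contradiction. The restriction $c|_{[0,\epsilon)}$ exhibits $v$ as inward-pointing, while $c|_{(-\epsilon,0]}$ exhibits $v$ as outward-pointing. But Proposition \ref{trichotomy} partitions $T_qM$ so that the inward-pointing and outward-pointing classes are disjoint, and $v$ is transverse, so $v$ cannot belong to both. This contradiction shows no such $p$ can exist, giving $\interior K \cap \partial M = \emptyset$.

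The conceptual crux, rather than any single hard calculation, is the second step: recognizing that being an interior point of $K$ is exactly what licenses a two-sided curve, and that this two-sidedness is precisely what clashes with the transversality supplied by normality. The remaining ingredients --- normality producing a transverse vector, and the integral condition making $Tl|_p$ an isomorphism so that the transverse vector can be pulled back into $T_pK$ --- are essentially bookkeeping; the substance is that the ``interior'' and ``normal'' hypotheses are rendered incompatible through the trichotomy.
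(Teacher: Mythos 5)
Your proof is correct: normality supplies the transverse vector $v \in D_q$, the integral condition and injectivity of $Tl|_p$ pull it back to $u \in T_pK$, interiority of $p$ licenses a two-sided curve with velocity $u$, and its push-forward exhibits $v$ as simultaneously inward- and outward-pointing, contradicting the partition of Proposition \ref{trichotomy}. For reference, this paper does not reprove the statement but imports it from \cite[Proposition 5.4]{Part1}, so there is no internal proof to compare against; however, your argument is exactly the technique the paper itself uses in analogous places --- see the proof of Proposition \ref{interiorinc} in the appendix, which likewise combines two-sided curves through interior points with the trichotomy.
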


The second observation is the Stefan-Sussmann theorem for normal distributions \cite[Theorem 4.1]{Part1}.

\begin{theorem}\label{Normal Stefan-Sussmann}
Let $M$ be a manifold with boundary $\partial M \neq \emptyset$ and $D$ be a normal distribution of rank $r$ on $M$. Denote by $\imath : \interior M \subset M$ and $\jmath : \partial M \subset M$ the inclusions of the interior and the boundary into $M$. Then, the pullback $\imath^*D$ is a distribution of rank $r|_{\interior M}$ on $\interior M$ and $\jmath^*D$ is a distribution of rank $r|_{\partial M}-1$ on $\partial M$. Moreover, the following are equivalent.
\begin{enumerate}
    \item[S1.] $D$ is neatly integrable by boundaries.
    \item[S2.] $\imath^*D$ is integrable and $D$ is precollared.
    \item[S3.] $\imath^*D$ and $\jmath^*D$ are integrable and $D$ is collared.
    \item[S4.] For every $p \in M$, there exists a chart neatly adapted to $D$ at $p$.
\end{enumerate}
\end{theorem}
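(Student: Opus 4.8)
The plan is to establish the two rank statements directly from normality, and then prove the equivalence of S1--S4 by a cycle of implications in which the interior distribution $\imath^*D$ carries the bulk of the integrability content while a collar supplies the missing transverse direction at the boundary.

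First I would verify the rank claims. Since $\imath$ is an open inclusion, $(\imath^*D)_p = D_p$ for every $p \in \interior M$, so $\imath^*D$ is at once a distribution of rank $r|_{\interior M}$. For the boundary, $(\jmath^*D)_p = (T\jmath|_p)^{-1}(D_p) = D_p \cap T_p(\partial M)$; normality provides a vector of $D_p$ transverse to $\partial M$, so $D_p \not\subset T_p(\partial M)$ and hence $\dim(D_p \cap T_p(\partial M)) = \dim D_p - 1 = r(p)-1$. That $\jmath^*D$ is genuinely a distribution—every $v \in (\jmath^*D)_p$ being the value of a $\jmath^*D$-section—I would obtain by taking a $D$-section through $v$, restricting it to a neighbourhood of $\partial M$, and using Proposition \ref{trichotomy} together with a fixed transverse field in $D$ to subtract off the transverse component, leaving a field tangent to $\partial M$ valued in $\jmath^*D$ and equal to $v$ at $p$.

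For the equivalences I would run S4 $\Rightarrow$ S1 $\Rightarrow$ S3 $\Rightarrow$ S2 $\Rightarrow$ S4, though the precise routing is flexible. The implication S4 $\Rightarrow$ S1 is immediate: the slices of a neatly adapted chart are neat integral manifolds with boundary through every point. For S1 $\Rightarrow$ S3, Proposition \ref{interior} shows the interior of each neat integral manifold is an integral manifold of $\imath^*D$, so $\imath^*D$ is integrable; the boundary $\partial L$ of a neat integral manifold through a boundary point $p$ lies in $\partial M$, is tangent to $D_p \cap T_p(\partial M) = (\jmath^*D)_p$, and has the correct dimension by the rank count, hence is an integral manifold of $\jmath^*D$, so $\jmath^*D$ is integrable; and flowing an inward-pointing transverse $D$-section off $\partial M$ yields a collar $\Sigma$, along which invariance of $D$ under the flows of its sections (a consequence of integrability) forces the product form $T\Sigma(\cdot,t)((\jmath^*D)_p) + \mathbb{R}\,\partial_t$ matching $\tilde{D}_2$ of Definition \ref{collar extension}, i.e.\ $D$ is collared. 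The step S3 $\Rightarrow$ S2 is then formal once one checks that collared distributions are in particular precollared.

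The main obstacle is S2 $\Rightarrow$ S4, the manufacture of a single chart whose slices are integral manifolds of $D$ with boundary in $\partial M$ and whose image is the half-space model $(-\epsilon,\epsilon)^n \cap \mathbb{H}^n$. Here I would take a boundaryless foliation chart for $\imath^*D$ on an interior slice transverse to the collar, furnished by the correspondence of Proposition \ref{dist and foliation correspondence}, and extend it across the precollar using the collar parameter $t$ as the last, $\mathbb{H}^n$-defining coordinate. The delicate points are (i) choosing the interior adapted chart so that it varies compatibly along the collar, so that the flow-box directions $\partial_{n-R+1},\dots,\partial_n$ span $D$ throughout and not merely on one slice—precisely where the precollar's product structure is consumed—and (ii) arranging the boundary slice $\{x^n=0\}$ to be an adapted chart for $\jmath^*D$ so that the leaf boundaries land in $\partial\mathbb{H}^n$. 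The feature that blocks a direct constant-rank Frobenius argument, and forces the Stefan--Sussmann style construction, is that the number $R$ of flow directions may jump as one passes between $\partial M$ and $\interior M$, reflecting the drop of $\jmath^*D$ in rank by one.
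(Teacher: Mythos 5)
The first thing to note is that this paper does not prove Theorem \ref{Normal Stefan-Sussmann} at all: it is quoted from the companion paper as \cite[Theorem 4.1]{Part1} (``The second observation is the Stefan-Sussmann theorem for normal distributions''), and the notions ``precollared'', ``collared'' and ``neatly adapted chart'' appearing in S2--S4 are not even defined here --- the paper explicitly defers them to \cite{Part1}. So there is no proof in this paper to compare yours against, and your proposal has to be judged on its own. On those terms, your rank computations are correct (the codimension-one intersection $D_p \cap T_p\partial M$, and subtracting the transverse component of a $D$-section against a locally transverse $D$-section to manufacture $\jmath^*D$-sections, are exactly the right moves), and S4 $\Rightarrow$ S1, S1 $\Rightarrow$ S3 are sound in outline, modulo details you should not wave at: $\partial L$ need not be connected (take the component through $p$); the inward-pointing transverse $D$-section must be built globally along $\partial M$ (pointwise normality plus a partition of unity, using that such vectors form a convex cone); and invariance of $D$ under flows of its sections requires the related-vector-field/uniqueness-of-integral-curves argument (as in the proof of Proposition \ref{larger}), not an appeal to ``a consequence of integrability''.

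The genuine gap is S2 $\Rightarrow$ S4. What you have written there is a description of what a proof would have to accomplish, not a proof: the two ``delicate points'' you flag --- making the interior adapted chart vary compatibly along the collar, and forcing the leaf boundaries into $\partial \mathbb{H}^n$ --- are the entire content of the implication, and they are left unresolved. Note also that S2 assumes integrability only of $\imath^*D$; integrability of $\jmath^*D$ is a conclusion, not a hypothesis, so your point (ii), which wants the boundary slice to be adapted to $\jmath^*D$, presupposes part of what must be shown. Since the rank is not constant, no Frobenius flow-box exists, and the construction must be a Stefan--Sussmann-type argument (composing flows of finitely many $D$-sections, with the collar parameter as the distinguished transverse coordinate); you acknowledge this but do not carry it out. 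Finally, because you never pin down ``precollared'' and ``collared'', the steps S3 $\Rightarrow$ S2 and S2 $\Rightarrow$ S4 cannot be checked even in principle, and the danger is real: under the weakest plausible reading of ``precollared'' (the collar direction lies in $D$), S2 $\Rightarrow$ S1 is simply false. On $\mathbb{H}^2$ the normal distribution spanned by $\partial_y$ and $y\,\partial_x$ has $\imath^*D = T(\interior \mathbb{H}^2)$ integrable and contains the collar direction $\partial_y$, yet admits no neat integral manifold through any boundary point, since the rank jumps from $1$ to $2$ off the boundary while rank is constant along any integral manifold. So the precise definition from \cite{Part1} is doing essential work, and a self-contained proof must begin by stating it.
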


Recall from the prior paper \cite{Part1} that for a smooth map $F : M \to N$ between manifolds with boundary and a distribution $D$ on $N$, the subset $F^*D \coloneqq TF^{-1}(D)$ of $TM$ is called \textbf{\emph{the pullback of $\bm{D}$ on to $\bm{M}$}}. For the interested reader, the remaining definitions used in Theorem \ref{Normal Stefan-Sussmann} are contained in \cite{Part1}.

\subsection{Application of the extension}

For the following, let $M$ be a manifold with (nonempty) boundary and $D$ be a collared distribution with collar $\Sigma : \partial M \times [0,1) \to M$. We will use the notation of the collar extension from definition \ref{collar extension}. Assume further that both $\imath^*D$ and $\jmath^*D$ are integrable distributions.

\begin{lemma}\label{integralsforthedouble}
If $L$ is an integral manifold with boundary of $D$, then $\tilde{\iota}(L)$ is an integral manifold with boundary of $\tilde{D}$. If $J$ is an integral manifold of $\jmath^*D$, then $\tilde{\Sigma}(J \times (-1,1))$ is an integral manifold of $\tilde{D}$.
\end{lemma}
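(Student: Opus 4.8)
The plan is to treat both assertions as bookkeeping consequences of Lemma \ref{doubleextend}: the substantive content (that $\tilde{D}$ collapses to $\tilde{D}_1$ on $\tilde{\iota}(M)$ and to $\tilde{D}_2$ along the collar) is already in hand, so each assertion reduces to exhibiting the correct immersed submanifold and then matching its tangent spaces against the appropriate fibre of $\tilde{D}$. In both cases the relevant submanifold is obtained by post-composing an injective immersion with an embedding, so it is automatically a connected immersed submanifold (with boundary in the first case, without in the second) by the convention for $f(A)$ recorded in item (3) of the notational preamble; connectedness is inherited since $L$ is connected and $J \times (-1,1)$ is a product of connected spaces.

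For the first assertion, write $l : L \subset M$ for the inclusion and recall that $\tilde{\iota}$ is an embedding, so $\tilde{\iota} \circ l$ is an injective immersion with image the submanifold with boundary $\tilde{\iota}(L)$, and the co-restriction $(\tilde{\iota} \circ l)|^{\tilde{\iota}(L)}$ is a diffeomorphism. First I would use this diffeomorphism to identify, at $q = \tilde{\iota}(p)$ with $p \in L$, the image of the tangent space of $\tilde{\iota}(L)$ inside $T_q \tilde{M}$ with $T\tilde{\iota}(Tl|_p(T_p L))$. Since $L$ is an integral manifold with boundary of $D$ we have $Tl|_p(T_p L) = D_p$, whence this space equals $T\tilde{\iota}(D_p) = (\tilde{D}_1)_q$. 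Because $q \in \tilde{\iota}(M)$, Lemma \ref{doubleextend} gives $\tilde{D}_q = (\tilde{D}_1)_q$, and the integral-manifold-with-boundary condition for $\tilde{\iota}(L)$ follows.

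For the second assertion, set $\psi = \tilde{\Sigma} \circ (j \times \mathrm{id})$ where $j : J \subset \partial M$ is the inclusion; since $\tilde{\Sigma}$ is an embedding and $j \times \mathrm{id}$ is an injective immersion, $\psi$ is an injective immersion with image $\tilde{\Sigma}(J \times (-1,1))$, a connected immersed submanifold without boundary, as $J$ and $(-1,1)$ are boundaryless. At a point $q = \tilde{\Sigma}(p,t)$ with $p = j(x)$, I would compute the tangent space of the image as $T\psi(T_x J \oplus T_t(-1,1))$ and split the product, using that $J$ integrates $\jmath^*D$, to get $T(j \times \mathrm{id})(T_x J \oplus T_t(-1,1)) = Tj(T_x J) \oplus T_t(-1,1) = (\jmath^*D)_p \oplus T_t(-1,1)$. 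Applying $T\tilde{\Sigma}$ and decomposing it into its two partial tangent maps reproduces exactly $T\tilde{\Sigma}(\cdot,t)|_p((\jmath^*D)_p) + T\tilde{\Sigma}(p,\cdot)|_t(T_t(-1,1)) = (\tilde{D}_2)_{\tilde{\Sigma}(p,t)}$, which equals $\tilde{D}_q$ by Lemma \ref{doubleextend}.

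The only genuinely delicate step is the bookkeeping in this last computation: one must verify that $T\tilde{\Sigma}$ splits along the product $\partial M \times (-1,1)$ into precisely the two partial tangent maps appearing in the definition of $\tilde{D}_2$, so that the image tangent space and the fibre $(\tilde{D}_2)_{\tilde{\Sigma}(p,t)}$ coincide rather than merely containing one another. Everything else, namely the submanifold structures, connectedness, and the final identification via Lemma \ref{doubleextend}, is routine once this decomposition is checked.
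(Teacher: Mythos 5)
Your proposal is correct and takes essentially the same route as the paper: both assertions are settled by computing the pushed-forward tangent space of the candidate submanifold and then closing with Lemma \ref{doubleextend}, and for the second assertion both arguments decompose the tangent space of the product $J \times (-1,1)$, use the integral-manifold property of $J$ to identify the $\partial M$-factor with $(\jmath^*D)_p$, and split $T\tilde{\Sigma}$ into its two partial tangent maps to land exactly on $(\tilde{D}_2)_{\tilde{\Sigma}(p,t)}$. The only difference is organizational: you phrase the splitting via $j \times \mathrm{id}$ and direct sums, while the paper routes it through the inclusion $k : J \times (-1,1) \subset \partial M \times (-1,1)$, the projections $\pi_1,\pi_2,\tau_1,\tau_2$, and a reconstruction by partial identity maps — the same linear algebra either way.
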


\begin{proof}
Assume $L$ is an integral manifold with boundary of $D$. Then, $\tilde{\iota}(L)$ is a connected immersed submanifold with boundary. Set $\tilde{l} : \tilde{\iota}(L) \subset \tilde{M}$ and $l : L \subset M$. Then, for $q \in \tilde{\iota}(L)$, writing $q = \tilde{\iota}(p)$ for some $q \in L$, we have, 
\begin{equation*}
T\tilde{l}|_q(T_q\tilde{\iota}(L)) = T\tilde{\iota}|_p(Tl|_p(T_pL)) = T\tilde{\iota}|_p(D_p) = (T\tilde{\iota}(D))_q = (\tilde{D}_1)_q.
\end{equation*}
From Lemma \ref{doubleextend}, since $q \in \tilde{\iota}(M)$, $\tilde{D}_{q} = (\tilde{D}_1)_{q}$ and hence $T\tilde{l}|_q(T_q\tilde{\iota}(L)) = \tilde{D}_{q}$. In summary, $\tilde{\iota}(L)$ is an integral manifold with boundary of $\tilde{D}$.

Assume $J$ is an integral manifold of $\jmath^*D$. Then $\tilde{\Sigma}(J \times (-1,1))$ is a connected immersed submanifold of $\tilde{M}$. Setting $\tilde{l} : \tilde{\Sigma}(J \times (-1,1)) \subset \tilde{M}$ and $k : J \times (-1,1) \subset \partial M \times (-1,1)$, we have, for $q \in \Sigma(J \times (-1,1))$, writing $q = \Sigma(p,t)$ for some $(p,t) \in \partial M \times (-1,1)$ that,
\begin{equation*}
T\tilde{l}|_q(T_q\Sigma(J \times (-1,1))) = T\Sigma|_{(p,t)}(Tk|_{(p,t)}(T_{(p,t)}(J \times (-1,1)))).
\end{equation*}
We now argue as in \cite[Lemma 5.7]{Part1}. Consider the projections $\pi_1,\pi_2$ of $\partial M \times (-1,1)$ (onto the indicated factors) and likewise $\tau_1,\tau_2$ on $J \times (-1,1)$. With $j : J \subset \partial M$, we obtain the following linear isomorphisms and relations.
\begin{align*}
\left(T\pi_1|_{(p,t)}, T\pi_2|_{(p,t)}\right) &: T_{(p,t)}(\partial M \times (-1,1)) \to T_p\partial M \times T_t(-1,1),\\
\left(T\tau_1|_{(p,t)}, T\tau_2|_{(p,t)}\right) &: T_{(p,t)}(J \times (-1,1)) \to T_p\partial L \times T_t(-1,1),\\
\pi_1 \circ k &= j \circ \tau_1,\\
\pi_2 \circ k &= \tau_2.
\end{align*}
With this, we get, 
\begin{align*}
T\pi_1|_{(p,t)}(Tk|_{(p,t)}(T_{(p,t)}(J \times (-1,1)))) &= Tj|_p(T_pJ) = (\jmath^*D)_p,\\
T\pi_2|_{(p,t)}(Tk|_{(p,t)}(T_{(p,t)}(J \times (-1,1)))) &= T_t(-1,1).
\end{align*}
Using again the fact that $\left(T\pi_1|_{(p,t)}, T\pi_2|_{(p,t)}\right)$ is a linear isomorphism, we see that for any subset  $A \subset T_{(p,t)}(\partial M \times [0,1))$,
\begin{equation*}
A = T\text{Id}(\cdot,t)|_p(T\pi_1|_{(p,t)}(A)) + T\text{Id}(p,\cdot)|_t(T\pi_2|_{(p,t)}(A)).
\end{equation*}
Hence, since $q = \tilde{\sigma}(p,t) \in V_D$, by Lemma \ref{doubleextend},
\begin{equation*}
\begin{split}
T\tilde{l}|_q(T_q\Sigma(J \times (-1,1))) &= T\Sigma|_{(p,t)}(Tk|_{(p,t)}(T_{(p,t)}(J \times (-1,1))))\\
&= T\Sigma|_{(p,t)}(T\text{Id}(\cdot,t)|_p((\jmath^*D)_p) + T\text{Id}(p,\cdot)|_t(T_t(-1,1)))\\
&= T\Sigma(\cdot,t)|_p((\jmath^*D)_p) + T\Sigma(p,\cdot)|_t(T_t(-1,1))\\
&= (D_2)_{\tilde{\Sigma}(p,t)}\\
&= \tilde{D}_{q}.
\end{split}
\end{equation*}
In summary, $\Sigma(J \times (-1,1))$ is an integral manifold of $\tilde{D}$.
\end{proof}

\begin{lemma}
The distribution $\tilde{D}$ is integrable. 
\end{lemma}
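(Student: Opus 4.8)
The plan is to verify integrability of $\tilde{D}$ pointwise, exhibiting through each point of $\tilde{M}$ an integral manifold of $\tilde{D}$ assembled from the two families produced in Lemma \ref{integralsforthedouble}. The organising observation is that, by construction, $\tilde{M} = \tilde{\iota}(\interior M) \cup V_D$, so every $q \in \tilde{M}$ lies in at least one of these two pieces and it suffices to treat each case. I would keep in mind throughout that $\tilde{D}$ is a distribution on the boundaryless manifold $\tilde{M}$, so the integral manifolds sought are ordinary (boundaryless) ones; the objects supplied by Lemma \ref{integralsforthedouble} will indeed be boundaryless precisely because $\interior M$, $\partial M$ and $(-1,1)$ are all boundaryless and $\tilde{\iota}$, $\tilde{\Sigma}$ are embeddings.

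For a point $q \in V_D$, I would write $q = \tilde{\Sigma}(p,t)$ for some $(p,t) \in \partial M \times (-1,1)$; the assumed integrability of $\jmath^*D$ yields an integral manifold $J$ of $\jmath^*D$ with $p \in J$, and the second half of Lemma \ref{integralsforthedouble} makes $\tilde{\Sigma}(J \times (-1,1))$ an integral manifold of $\tilde{D}$, which contains $q$ since $(p,t) \in J \times (-1,1)$. For a point $q \in \tilde{\iota}(\interior M)$, I would write $q = \tilde{\iota}(p)$ with $p \in \interior M$ and take, by the assumed integrability of $\imath^*D$, an integral manifold $I$ of $\imath^*D$ through $p$. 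The point requiring care is that Lemma \ref{integralsforthedouble} is phrased for integral manifolds with boundary of $D$, not for integral manifolds of $\imath^*D$, so I would first reconcile these: since $\imath : \interior M \subset M$ is an open submanifold inclusion, $T\imath|_{p'}$ is a linear isomorphism carrying $(\imath^*D)_{p'} = (T\imath|_{p'})^{-1}(D_{p'})$ onto $D_{p'}$, whence $I$, regarded as a connected immersed submanifold of $M$ with empty boundary and inclusion $l : I \subset M$, satisfies $Tl|_{p'}(T_{p'}I) = D_{p'}$ for every $p' \in I$. Thus $I$ is an integral manifold with boundary of $D$ (with empty boundary, so trivially $\partial I \subset \partial M$), and the first half of Lemma \ref{integralsforthedouble} makes $\tilde{\iota}(I)$ an integral manifold of $\tilde{D}$ containing $q = \tilde{\iota}(p)$.

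Combining the two cases, an integral manifold of $\tilde{D}$ passes through every point of $\tilde{M}$, which is exactly the assertion that $\tilde{D}$ is integrable. The only step demanding genuine attention is the identification in the interior case of integral manifolds of $\imath^*D$ with boundaryless integral manifolds with boundary of $D$, carried out through the isomorphism $T\imath|_{p'}$; the remaining steps are direct appeals to Lemma \ref{integralsforthedouble} together with the standing integrability hypotheses on $\imath^*D$ and $\jmath^*D$.
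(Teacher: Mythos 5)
Your proposal is correct and follows exactly the paper's argument: decompose $\tilde{M} = \tilde{\iota}(\interior M) \cup V_D$, and in each case feed an integral manifold of $\imath^*D$ or $\jmath^*D$ through the corresponding half of Lemma \ref{integralsforthedouble}. The one point where you go beyond the paper is in explicitly verifying that a (boundaryless) integral manifold of $\imath^*D$ qualifies as an integral manifold with boundary of $D$ so that Lemma \ref{integralsforthedouble} applies; the paper leaves this identification implicit, and spelling it out is a harmless refinement rather than a different route.
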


\begin{proof}
Let $x \in \tilde{M}$. If $x \in \tilde{\iota}(\interior M)$, then $x = \tilde{\iota}(p)$ for some $p \in \interior M$. Then, taking an integral manifold $I$ of $\imath^*D$ with $p \in I$, from Lemma \ref{integralsforthedouble} we have that $\tilde{\iota}(I)$ is an integral manifold of $\tilde{D}$ where $x \in \tilde{\iota}(I)$. 

Otherwise, $x \in V_D$ so that $x = \tilde{\Sigma}(p,t)$ for some $(p,t) \in \partial M \times (-1,1)$. Then, taking an integral manifold $J$ of $\jmath^*D$, from Lemma \ref{integralsforthedouble} we have that $\tilde{\Sigma}(J \times (-1,1))$ is an integral manifold of $\tilde{D}$ where $x \in \tilde{\Sigma}(J \times (-1,1))$.
\end{proof}

\begin{lemma}\label{final work}
If $\tilde{L}$ is an integral manifold of $\tilde{D}$, then the subset $\tilde{\iota}^{-1}(\tilde{L})$ of $M$ can be given the structure of manifold with boundary $\tilde{\iota}^*\tilde{L}$ whereby $\tilde{\iota}|_{\tilde{\iota}^*\tilde{L}}^{\tilde{L}}$ is an embedding. Moreover, $\tilde{\iota}^*\tilde{L}$ is an weakly embedded submanifold with boundary of $M$ which satisfies $\partial( \tilde{\iota}^*\tilde{L} )\subset \partial M$ and $T_pl(T_p\tilde{\iota}^*\tilde{L}) = D_p$ for $p \in \tilde{\iota}^*\tilde{L}$, where $l : \tilde{\iota}^*\tilde{L} \subset M$.
\end{lemma}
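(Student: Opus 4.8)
The plan is to realize $\tilde{\iota}^{-1}(\tilde{L})$ as the pullback along the embedding $\tilde{\iota} : M \to \tilde{M}$ of the intersection $S := \tilde{L} \cap \tilde{\iota}(M)$, and to show that $S$ is a submanifold with boundary of $\tilde{L}$ whose boundary lies on the seam $\tilde{\iota}(\partial M)$. Once $S$ is known to be such a submanifold with boundary of $\tilde{L}$, I define $\tilde{\iota}^*\tilde{L}$ to be the unique smooth structure on $\tilde{\iota}^{-1}(\tilde{L}) = \tilde{\iota}^{-1}(S)$ for which $\tilde{\iota}|_{\tilde{\iota}^*\tilde{L}}^{\tilde{L}}$ is a diffeomorphism onto $S$; this is legitimate because $\tilde{\iota}$ is an embedding. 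Throughout I will use that, $\tilde{D}$ being integrable, $\tilde{L}$ is weakly embedded in $\tilde{M}$ by Proposition \ref{boundarylessweakembedded} and is contained in a unique maximal integral manifold by Theorem \ref{boundarylessmaximal}; I write $\lambda : \tilde{L} \subset \tilde{M}$ for the inclusion.

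The heart of the argument, and the step I expect to be the main obstacle, is the local description of $S$ inside $\tilde{L}$ near the seam. At a point $\tilde{\iota}(p)$ with $p \in \interior M$ there is nothing to do: $\tilde{\iota}(\interior M)$ is open in $\tilde{M}$, so near such a point $S$ coincides with the open subset $\tilde{L} \cap \tilde{\iota}(\interior M)$ of $\tilde{L}$, a boundaryless chart. At a point $\tilde{\iota}(p)$ with $p \in \partial M$ I pass to the collar coordinates $\tilde{\Sigma} : \partial M \times (-1,1) \to V_D$, in which the seam $\tilde{\iota}(\partial M)$ is $\{t = 0\}$ and $\tilde{\iota}(M)$ is $\{t \geq 0\}$. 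By Lemma \ref{doubleextend}, on $V_D$ the distribution $\tilde{D}$ equals $\tilde{D}_2$, which contains the coordinate direction $T\tilde{\Sigma}(p,\cdot)|_t(T_t(-1,1))$; hence the vector field $\partial_t$ (pushed forward by $\tilde{\Sigma}$) is everywhere tangent to the integral manifold $\tilde{L}$ on $V_D$ and is transverse to the seam. Straightening $\partial_t$ by its flow produces a chart of $\tilde{L}$ about $\tilde{\iota}(p)$ in which $t$ is a coordinate and $S = \tilde{L} \cap \{t \geq 0\}$ is the corresponding half-space; equivalently, using Lemma \ref{integralsforthedouble} together with the uniqueness in Theorem \ref{boundarylessmaximal}, $\tilde{L}$ agrees near $\tilde{\iota}(p)$ with a product $\tilde{\Sigma}(J \times (-1,1))$ for a piece $J$ of a $\jmath^*D$-integral manifold, and intersecting with $\{t \geq 0\}$ yields $\tilde{\Sigma}(J \times [0,1))$. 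Either way $S$ acquires a boundary chart at $\tilde{\iota}(p)$. Since all these charts are charts of the single smooth manifold $\tilde{L}$, they are mutually compatible, so $S$ is a smooth submanifold with boundary of $\tilde{L}$ with $\partial S = S \cap \tilde{\iota}(\partial M)$.

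It remains to transfer this structure to $M$ and verify the asserted properties. Pulling back along $\tilde{\iota}$ as above yields $\tilde{\iota}^*\tilde{L}$ with $\tilde{\iota}|^{\tilde{L}}$ an embedding, and $\partial(\tilde{\iota}^*\tilde{L}) = \tilde{\iota}^{-1}(\partial S) \subset \tilde{\iota}^{-1}(\tilde{\iota}(\partial M)) = \partial M$, which is the neatness claim. For the tangent-space identity, let $p \in \tilde{\iota}^*\tilde{L}$ and write $e : \tilde{\iota}^*\tilde{L} \to \tilde{L}$ for the diffeomorphism onto $S$, so that $\tilde{\iota} \circ l = \lambda \circ e$. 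Applying tangent maps and using that $\tilde{L}$ is an integral manifold of $\tilde{D}$ gives $T\tilde{\iota}(T_p l(T_p \tilde{\iota}^*\tilde{L})) = T\lambda(T_{e(p)}\tilde{L}) = \tilde{D}_{\tilde{\iota}(p)}$, and by Lemma \ref{doubleextend} (since $\tilde{\iota}(p) \in \tilde{\iota}(M)$) this equals $(\tilde{D}_1)_{\tilde{\iota}(p)} = T\tilde{\iota}(D_p)$; injectivity of $T\tilde{\iota}$ then gives $T_p l(T_p \tilde{\iota}^*\tilde{L}) = D_p$. Finally, for weak embeddedness let $f : N \to M$ be smooth between manifolds with boundary with $f(N) \subset \tilde{\iota}^*\tilde{L}$. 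Then $\tilde{\iota} \circ f : N \to \tilde{M}$ is smooth with image in $\tilde{L}$, so the induced map $N \to \tilde{L}$ is smooth because $\tilde{L}$ is weakly embedded in $\tilde{M}$; since $S$ is an embedded, hence weakly embedded, submanifold with boundary of $\tilde{L}$, this map co-restricts smoothly to $S$, and composing with $e^{-1}$ gives the induced map $N \to \tilde{\iota}^*\tilde{L}$, which is therefore smooth. The smoothness of the co-restriction step is exactly the content supplied by Proposition \ref{weakly embedded compatibility} in the appendix.
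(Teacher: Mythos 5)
Your proposal is correct and takes essentially the same route as the paper: split $\tilde{L} \cap \tilde{\iota}(M)$ into the interior part (open in $\tilde{L}$) and the collar part, use Lemma \ref{doubleextend} to see that the collar $t$-direction is tangent to $\tilde{L}$ on $V_D$, conclude that $\tilde{L} \cap \tilde{\iota}(M)$ is an embedded codimension-$0$ submanifold with boundary of $\tilde{L}$ with boundary on the seam, transfer the structure through the embedding $\tilde{\iota}$, and verify neatness, the tangency identity and weak embeddedness exactly as you do (via Proposition \ref{weakly embedded compatibility} and the factorization through the embedding). The only cosmetic difference is the local normal form on the collar: the paper observes that the $t$-coordinate is a regular function on $V_D \cap \tilde{L}$ and invokes Lee's sublevel-set regular-domain result (Proposition 5.47), whereas you straighten the flow of $\partial_t$ (or appeal to the product integral manifolds of Lemma \ref{integralsforthedouble}); these are interchangeable implementations of the same step.
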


\begin{proof}
First, we have that $\tilde{\iota}(\interior M) \cap \tilde{L}$ and $V_D \cap \tilde{L}$ are open submanifolds of $\tilde{L}$. Now, we have that $f = (\pi_2 \circ \sigma_D^{-1})|_{| V_D \cap \tilde{L}}^{\mathbb{R}}$ is regular on the immersed submanifold $V_D \cap \tilde{L}$ of $V_D$. Hence, the sublevel set $f^{-1}((-\infty,0])$ is a regular domain in $V_D \cap \tilde{L}$ (see Proposition 5.47 in Lee's book \cite{Lee}) whereby $\partial f^{-1}((-\infty,0]) = f^{-1}(0) = \tilde{\partial M} \cap (V_D \cap \tilde{L}) = \tilde{\partial M} \cap \tilde{L}$. That is, $\tilde{\iota}(V)\cap\tilde{L}$ is a regular domain in $\tilde{L}$ where $\partial (\tilde{\iota}(V)\cap\tilde{L}) = \tilde{\partial M} \cap \tilde{L}$.

With this, we get that $(\tilde{\iota}(\interior M) \cap \tilde{L}) \cup (\tilde{\iota}(V)\cap\tilde{L}) = \tilde{\iota}(M) \cap \tilde{L}$ is an embedded submanifold with boundary of $\tilde{L}$. Hence, we can endow $\tilde{\iota}^{-1}(\tilde{L})$ with the unique smooth structure $\tilde{\iota}^*\tilde{L}$ so that the map $\tilde{h} = \tilde{\iota}|_{\tilde{\iota}^*\tilde{L}}^{\tilde{\iota}(M) \cap \tilde{L}}$ is a diffeomorphism. It is hence clear from this that $\tilde{\iota}^*\tilde{L}$ is a submanifold with boundary of $M$ whereby $\tilde{\iota}|_{\tilde{\iota}^*\tilde{L}}^{\tilde{L}}$ is an embedding (so that $\partial (\tilde{\iota}^*\tilde{L}) \subset \partial M$). We now show that $\tilde{\iota}^*\tilde{L}$ is weakly embedded.

Let $f : S \to M$ be a smooth map between manifolds with boundary with $f(S) \subset \tilde{\iota}^*\tilde{L}$. Then, $\tilde{\iota} \circ f : S \to \tilde{M}$ is smooth. Then, notice that $(\tilde{\iota} \circ f)(S) \subset \tilde{L}$. From this, since $\tilde{L}$ is weakly embedded, Proposition \ref{weakly embedded compatibility} gives us that $(\tilde{\iota} \circ f)|^{\tilde{L}}$ is smooth. Moreover, we can write $(\tilde{\iota} \circ f)|^{\tilde{L}} = \tilde{\iota}|_{\tilde{\iota}^*\tilde{L}}^{\tilde{L}} \circ f|^{\tilde{\iota}^*\tilde{L}}$. Hence, because $\tilde{\iota}|_{\tilde{\iota}^*\tilde{L}}^{\tilde{L}}$ is an embedding, we have that $f|^{\tilde{\iota}^*\tilde{L}}$ is smooth. Hence, $\tilde{\iota}^*\tilde{L}$ is weakly embedded, as claimed.

With this, writing $\tilde{l} : \tilde{\iota}(M) \cap \tilde{L} \subset \tilde{M}$, we have that $\tilde{l} \circ \tilde{h} = \tilde{\iota} \circ l$. So, for $p \in \tilde{\iota}^*\tilde{L}$ we have from Lemma \ref{doubleextend} that $\tilde{D}_{\tilde{\iota}(p)} = (\tilde{D}_1)_{\tilde{\iota}(p)} = T\tilde{\iota}|_p(D_p)$ and hence,
\begin{equation*}
\begin{split}
T\tilde{\iota}|_p(Tl|_p(T_p\tilde{\iota}^*\tilde{L})) &= T\tilde{\iota} \circ l|_p(T_p\tilde{\iota}^*\tilde{L})\\
&= T\tilde{l} \circ \tilde{h}|_p(T_p\tilde{\iota}^*\tilde{L})\\
&= T\tilde{l}|_{\tilde{h}(p)}(T\tilde{h}|_p(T_p\tilde{\iota}^*\tilde{L}))\\
&= T\tilde{l}|_{\tilde{h}(p)}(T_{\tilde{h}(p)}\tilde{L})\\
&= T\tilde{l}|_{\tilde{\iota}(p)}(T_{\tilde{\iota}(p)}\tilde{L})\\
&= \tilde{D}_{\tilde{\iota}(p)}\\
&= T\tilde{\iota}|_p(D_p).
\end{split}
\end{equation*}

So that, since $\tilde{\iota}$ is an embedding, $Tl|_p(T_p\tilde{\iota}^*\tilde{L}) = D_p$.
\end{proof}

\begin{lemma}\label{existsmax}
At every point $p \in M$, there exists a neat integral manifold $L$ of $D$ containing $p$ such that $L$ is weakly embedded in $M$ and maximal.
\end{lemma}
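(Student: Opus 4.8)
The plan is to obtain $L$ by pushing a maximal integral manifold of the (boundaryless) collar extension $\tilde{D}$ down to $M$ through the machinery of Lemma \ref{final work}. Since $\tilde{M}$ is a manifold without boundary and $\tilde{D}$ is an integrable distribution on it, Theorem \ref{boundarylessmaximal} supplies, through the point $\tilde{\iota}(p) \in \tilde{M}$, a maximal integral manifold $\tilde{L}$ of $\tilde{D}$, and Proposition \ref{boundarylessweakembedded} tells us that $\tilde{L}$ is weakly embedded in $\tilde{M}$. Feeding $\tilde{L}$ into Lemma \ref{final work} produces $\tilde{\iota}^*\tilde{L}$, a weakly embedded submanifold with boundary of $M$ which is neat (so $\partial(\tilde{\iota}^*\tilde{L}) \subset \partial M$) and satisfies the integral tangency condition $Tl|_q(T_q \tilde{\iota}^*\tilde{L}) = D_q$. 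Since an integral manifold is by definition connected, I would then take $L$ to be the connected component of $\tilde{\iota}^*\tilde{L}$ containing $p$; passing to this open piece preserves weak embeddedness, neatness and the tangency condition, so that $L$ is a neat integral manifold of $D$ containing $p$ and weakly embedded in $M$.

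It then remains to verify maximality of $L$, which I would do by lifting an arbitrary competitor back into the double. Let $K$ be an integral manifold with boundary of $D$ sharing a point $x$ with $L$. By Lemma \ref{integralsforthedouble}, $\tilde{\iota}(K)$ is an integral manifold with boundary of $\tilde{D}$, and since $\tilde{\iota}(x) \in \tilde{\iota}(K)$ while $x \in L \subset \tilde{\iota}^{-1}(\tilde{L})$ forces $\tilde{\iota}(x) \in \tilde{L}$, the manifolds $\tilde{\iota}(K)$ and $\tilde{L}$ have a common point. Applying Proposition \ref{larger} to the integrable distribution $\tilde{D}$ on the boundaryless $\tilde{M}$, the maximal integral manifold $\tilde{L}$ is also a maximal integral manifold with boundary of $\tilde{D}$; maximality then gives that $\tilde{\iota}(K)$ is a submanifold with boundary of $\tilde{L}$, so in particular $\mathrm{Pts}(\tilde{\iota}(K)) \subset \mathrm{Pts}(\tilde{L})$ and hence $\mathrm{Pts}(K) \subset \tilde{\iota}^{-1}(\mathrm{Pts}(\tilde{L})) = \mathrm{Pts}(\tilde{\iota}^*\tilde{L})$. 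As $K$ is connected and meets the component $L$, its points lie in $\mathrm{Pts}(L)$. Finally, because $L$ is weakly embedded in $M$ with $K \subset L$ as point sets, the inclusion $K \subset M$, a smooth map landing in $L$, induces via Proposition \ref{weakly embedded compatibility} a smooth map $K \to L$, realising $K$ as a submanifold with boundary of $L$, exactly as required for maximality.

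I expect the maximality step to be the main obstacle, for two reasons. First, one must be careful that the notion of ``submanifold with boundary of $\tilde{L}$'' furnished by the boundaryless theory through Proposition \ref{larger} descends correctly along $\tilde{\iota}$ to the corresponding notion in $M$; the point-set bookkeeping $\tilde{\iota}^{-1}(\mathrm{Pts}(\tilde{L})) = \mathrm{Pts}(\tilde{\iota}^*\tilde{L})$ together with the passage to the connected component $L$ must be handled cleanly, so that no integral manifold sharing a point with $L$ escapes $L$. Second, upgrading the purely set-theoretic inclusion $\mathrm{Pts}(K) \subset \mathrm{Pts}(L)$ to the smooth statement that $K$ is a submanifold with boundary of $L$ genuinely relies on the weak embeddedness of $L$ supplied by Lemma \ref{final work} together with the compatibility result of the appendix; this is the step where working in the category of manifolds with boundary could in principle cause trouble, and it is precisely what Proposition \ref{weakly embedded compatibility} has been set up to control.
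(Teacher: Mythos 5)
Your proposal is correct and follows essentially the same route as the paper: extend to the double, take the boundaryless maximal integral manifold $\tilde{L}$ through $\tilde{\iota}(p)$, let $L$ be the connected component of $\tilde{\iota}^*\tilde{L}$ containing $p$ (Lemma \ref{final work}), and prove maximality by lifting a competitor $K$ via Lemma \ref{integralsforthedouble} and applying Proposition \ref{larger}. The only cosmetic differences are that the paper first realises $K$ as an immersed submanifold with boundary of $\tilde{\iota}^*\tilde{L}$ (so that the connectedness/component argument is read in that manifold's possibly finer topology) before concluding $K \subset L$, and it invokes the weak embeddedness of $L$ directly by definition, rather than through Proposition \ref{weakly embedded compatibility}, for the final smooth inclusion $K \to L$.
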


\begin{proof}
Let $p \in M$. Then, $\tilde{\iota}(p) \in \tilde{M}$. Since $\tilde{D}$ is integrable, from Theorem \ref{boundarylessmaximal}, there exists a maximal integral manifold $\tilde{L}$ of $\tilde{D}$ containing $\tilde{p}$. Now, let $L$ denote the connected component of $\tilde{\iota}^*\tilde{L}$ containing $p$ as in Lemma \ref{final work}. Then, we see from this Lemma that $L$ is an integral manifold with boundary $\partial L \subset \partial M$. So that $L$ is neat. 

We will now show that $L$ is weakly embedded. Indeed, since $L$ is an open submanifold with boundary of $\tilde{\iota}^*\tilde{L}$, from Theorem \ref{embedded with bdry implies weak with boundary}, we have that $L$ is a weakly embedded submanifold of $\tilde{\iota}^*\tilde{L}$ and hence $L$ is a weakly embedded submanifold of $M$.

We will how show that $L$ is maximal. Indeed, let $K$ be an integral manifold with boundary of $D$ with a point $q$ in common with $L$. Then, from Lemma \ref{integralsforthedouble}, we have that $\tilde{\iota}(K)$ is an integral manifold with boundary of $\tilde{D}$ and moreover, $\tilde{\iota}(q)$ is a point common to $\tilde{L}$. Hence, by Proposition \ref{larger}, $\tilde{\iota}(K) \subset \tilde{L}$.

Hence, (by injectivity) we have $K \subset \iota^*\tilde{L}$. Then, since $K$ is immersed in $M$ and $\iota^*\tilde{L}$ is weakly embedded in $M$, $K$ is an immersed submanifold with boundary of $\iota^*\tilde{L}$. Hence, since $K$ is connected, we have that $\text{Pts}(K)$ is a connected subset of $\iota^*\tilde{L}$. Hence, because $q \in K$ and $q \in L$, we must have that $K \subset L$. Then, since $L$ is weakly embedded in $M$, $K$ is an immersed submanifold of $L$.
\end{proof}

\begin{lemma}\label{neatintsareweak}
Let $L$ be a maximal integral manifold with boundary of $D$. Let $K$ be a neat integral manifold with boundary of $D$ contained by $L$. Then $K$ is an open submanifold with boundary of $L$.
\end{lemma}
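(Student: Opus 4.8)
The plan is to leverage the maximality of $L$ to realise $K$ as a codimension-zero submanifold with boundary of $L$, and then to use neatness together with Proposition \ref{interior} to show that the inclusion respects the boundary stratification, from which openness follows by a local argument.

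First I would invoke the hypotheses directly. Since $K \subset L$, the two manifolds share a point, so the definition of maximality of $L$ yields at once that $K$ is a submanifold with boundary of $L$. Writing $l : K \subset M$ and $k : L \subset M$ for the inclusions, I would then check that this inclusion has codimension zero: fixing any $q \in K \subset L$, both $K$ and $L$ are integral manifolds with boundary, so $Tl|_q(T_qK) = D_q = Tk|_q(T_qL)$, and since $l,k$ are immersions this forces $\dim K = \dim D_q = \dim L =: n$ (connectedness of $K$ and $L$ making these dimensions well defined).

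Next I would pin down the boundary behaviour of the inclusion $\iota : K \to L$. Applying Proposition \ref{interior} to $K$ gives $\interior K \cap \partial M = \emptyset$, and applying it to $L$ gives $\interior L \cap \partial M = \emptyset$, i.e. $L \cap \partial M \subset \partial L$. Since $K$ is neat, $\partial K \subset \partial M$, and as $\partial K \subset K \subset L$ this yields $\partial K \subset L \cap \partial M \subset \partial L$. For the interior, I would note that $\interior K$ is a boundaryless $n$-manifold mapping into $L$ by an equidimensional immersion; by invariance of domain such a map cannot send a point into $\partial L$ (an open subset of $\mathbb{R}^n$ cannot lie in a half-space while meeting its boundary), so $\interior K \subset \interior L$, and the same local-diffeomorphism property shows $\interior K$ is open in $\interior L$, hence in $L$.

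It remains to show that $\iota$ is open at the boundary points, and this is the step I expect to be the main obstacle. At $p \in \partial K \subset \partial L$ I would take a slice chart for $K$ viewed as an embedded codimension-zero submanifold with boundary near $p$ (an injective immersion is locally an embedding), inside a boundary chart of $L$ modelled on $\mathbb{H}^n = \{x_1 \ge 0\}$ with $\varphi(p) = 0$ and $\partial L$ corresponding to $\{x_1 = 0\}$. The content is that the local models for a codimension-zero submanifold with boundary are the half-space $\{x_1 \ge 0\}$ and corner-type regions such as $\{x_1 \ge 0,\ x_n \ge 0\}$; the latter would place a portion of $\partial K$ in $\{x_1 > 0\}$, i.e. in $\interior L$, contradicting $\partial K \subset \partial L$. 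Hence only the model $K \cap W = W$ survives, so $K$ contains a neighbourhood of $p$ in $L$. Combining the interior and boundary cases shows $\iota$ is an open map; being an injective immersion it is then an open embedding, and so $K$ is an open submanifold with boundary of $L$. The crux is thus the boundary normal form and the exclusion of corner models via neatness; equivalently, one may phrase this last step as an application of the inverse function theorem for manifolds with boundary to the equidimensional, boundary-respecting immersion $\iota$.
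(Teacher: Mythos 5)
Your proposal is correct and follows essentially the same route as the paper: $K$ is realised as an immersed submanifold with boundary of $L$ (the paper gets this from weak embeddedness of $L$, you get it directly from the definition of maximality --- both are valid), codimension zero follows because both are integral manifolds of $D$, the inclusion $\partial K \subset \partial L$ follows from neatness of $K$ together with Proposition \ref{interior}, and openness then follows from the codimension-zero criterion, which is exactly the paper's Proposition \ref{codim 0 bdry}. The only weak point is your ``local models'' step: the claimed dichotomy between half-space and corner-type models is not rigorous (a corner region such as $\{x_1 \geq 0,\ x_n \geq 0\}$ is not a smooth manifold with boundary, and codimension-zero submanifolds with boundary admit no such short list of normal forms), but your fallback formulation --- applying the inverse function theorem for manifolds with boundary to the equidimensional, boundary-respecting immersion $\iota$ --- is precisely the content of Proposition \ref{codim 0 bdry} via Corollary \ref{globinvfunc}, so replacing the heuristic by that citation makes your argument complete and essentially identical to the paper's.
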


\begin{proof}
Since $K \subset L$ and $L$ is weakly embedded, $K$ is an immersed submanifold with boundary of $L$. Since both $K$ and $L$ are neat, Proposition \ref{interior} gives $\partial K = \partial M \cap K$ and $\partial L = \partial M \cap L$ so that $\partial K \subset \partial L$. Since $K$ has codimension $0$ in $L$, Proposition \ref{codim 0 bdry} gives us that $K$ is an open submanifold with boundary of $L$.
\end{proof}

\subsection{Maximality and weak embeddedness}

The proofs of our results on maximality and weak embeddedness are immediate from Lemmas \ref{existsmax} and \ref{neatintsareweak} as follows.

\begin{proof}[Proof of Theorem \ref{normal dist maximal}]
Let $M$ be a manifold with boundary and $D$ be a normal distribution on $M$ which is neatly integrable. Let $p \in M$. From Lemma \ref{existsmax}, there exists a maximal integral manifold $L$ of $D$ containing $p$. Now, if $L_1,L_2$ were maximal integral manifolds with boundary of $D$ containing $p$. Then $L_1$ and $L_2$ are immersed submanifolds with boundary of each other and hence $L_1 = L_2$. That is, at each point $p \in M$, there exists a maximal integral manifold with boundary $L$ containing $p$.

Now, let $K$ be a maximal integral manifold of $D$. Then, taking an element $p \in K$, from Lemma \ref{existsmax}, there exists a maximal integral manifold $L$ with boundary passing through $p$ which is neat. By the uniqueness above, we have that $K = L$ and hence $K$ is neat. 
\end{proof} 

\begin{proof}[Proof of Proposition \ref{normal dist weak embedd}]
Let $M$ be a manifold with boundary and $D$ be a normal distribution on $M$ which is neatly integrable. Let $K$ be an neat integral manifold with boundary of $D$. From Lemma \ref{existsmax}, there exists a neat integral manifold $L$ of $D$ containing $p$ such that $L$ is weakly embedded in $M$ and maximal. Thus, $K \subset L$. Hence, by Lemma \ref{neatintsareweak}, $K$ is an open submanifold with boundary of $L$. From this, it follows that $K$ is weakly embedded in $L$ from Theorem \ref{embedded with bdry implies weak with boundary}. Since $L$ is weakly embedded in $M$, it easily follows that $K$ is weakly embedded in $M$.
\end{proof}

\subsection{Neat foliations}

We will now prove our 1-1 correspondence with neat foliations. We will keep track of the set theory involved.

\begin{proof}[Proof of Proposition \ref{neat foliations}]
Let $M$ be a manifold with boundary. Let 
\begin{equation*}
\mathcal{D}_N \coloneqq \{D \subset TM : D \text{ is an integrable normal distribution on $M$} \}.
\end{equation*}
Then $\mathcal{D}_N$ is the set of integrable normal distributions on $M$. Now, let $D \in \mathcal{D}_N$. From Theorem \ref{normal dist maximal}, we may replace each point $p \in M$ with the unique maximal integral manifold with boundary of $D$ containing $p$ to form the unique partition $\mathcal{F}_D$ of $M$ by the maximal integral manifolds with boundary of $D$. 

We now show that $\mathcal{F}_D$ is a neat foliation by manifolds with boundary. From Proposition \ref{normal dist weak embedd}, $\mathcal{F}_D$ is, in particular, a partition of $M$ into weakly embedded submanifolds with boundary. Now, let $p \in M$. Then, by Theorem \ref{Normal Stefan-Sussmann}, there exists a chart $(U,\varphi)$ neatly adapted to $D$ at $p$. That is, setting $n = \dim M$, $r : M \to \mathbb{N}_0$ to be the rank of $D$, and denoting by $\partial_1,...,\partial_n$ the local vector fields on $M$ induced by $(U,\varphi)$, the following holds.
\begin{enumerate}
    \item $\varphi(p) = 0$ and $\varphi(U) = (-\epsilon,\epsilon)^n$ if $p \in \interior M$ and $\varphi(U) = (-\epsilon,\epsilon)^n \cap \mathbb{H}^n$ otherwise,
    \item $\partial_{n-r(p)+1},...,\partial_n$ are local $D$-sections,
    \item For all $c \in (-\epsilon,\epsilon)^{n-r(p)}$, denoting $U_c = \varphi^{-1}\left(\{c\} \times (-\epsilon,\epsilon)^{r(p)}\right)$ if $r(p) < n$ and $U_c = U$ otherwise, the rank $r$ of $D$ is constant along $U_c$.
\end{enumerate}
Now, we have that $R = r(p)$ is the dimension of the unique $K \in \mathcal{F}_D$ containing $p$ since $K$ is an integral manifold with boundary of $D$. Let $L \in \mathcal{F}_D$. Consider $\partial_i$ where $n-r(p)+1 \leq i \leq n$. Then, since $\partial_i$ is a local $D$-section, and $L$ is a maximal integral manifold with boundary, by a similar argument to that of Proposition 5.2 in \cite{Part1} on maximal integral manifolds contain integral curves they intersect, we get that the integral curves of $\partial_i$ intersecting $L$ are contained by $L$. Altogether, $(U,\varphi)$ satisfies,
\begin{enumerate}
  \item $\varphi(p) = 0$ and $\varphi(U) = (-\epsilon,\epsilon)^n$ for some $\epsilon > 0$ if $p \in \interior M$ and $\varphi(U) = (-\epsilon,\epsilon)^n \cap \mathbb{H}^n$ for some $\epsilon > 0$ otherwise.
  \item For all $L \in \mathcal{F}$, and for any coordinate vector field $\partial_i$ where $n-R+1 \leq i \leq n$, all integral curves of $\partial_i$ intersecting $L$ are contained in $L$.
\end{enumerate}
Hence, $\mathcal{F}_D$ is a neat foliation by manifolds with boundary.

Let $\mathcal{F}$ be a neat foliation by manifolds with boundary. Define,
\begin{equation*}
D_{\mathcal{F}} \coloneqq \{v \in TM : v \in Tl(TL) \text{ for some } L \in \mathcal{F} \text{ where } l : L \subset M\}
\end{equation*}

Clearly $D_{\mathcal{F}} \subset TM$ and $(D_{\mathcal{F}})_p$ is a vector subspace of $T_pM$ for each $p \in M$. Now, let $p \in M$. Then, take a chart $(U,\varphi)$ satisfying the following (where $R$ is the dimension of the unique $K \in \mathcal{F}$ containing $p$).
\begin{enumerate}
  \item $\varphi(p) = 0$ and $\varphi(U) = (-\epsilon,\epsilon)^n$ for some $\epsilon > 0$ if $p \in \interior M$ and $\varphi(U) = (-\epsilon,\epsilon)^n \cap \mathbb{H}^n$ for some $\epsilon > 0$ otherwise.
  \item For all $L \in \mathcal{F}$, and for any coordinate vector field $\partial_i$ where $n-R+1 \leq i \leq n$, all integral curves of $\partial_i$ intersecting $L$ are contained in $L$.
\end{enumerate}
From the second property, we see that the coordinate vector fields $\partial_i$ for $n-R+1 \leq i \leq n$ take values in $D_{\mathcal{F}}$. So, since $\dim (D_{\mathcal{F}})_p = R$ and the $\partial_i$ ($i \in \{1,...,n\}$) are linearly independent, $(D_{\mathcal{F}})_p = \text{span}\{\partial_i|_p : n-R+1\leq i \leq n\}$. Taking a smooth bump function $\rho : U \to \mathbb{R}$ with $\rho(p) = 1$, denoting by $\rho\partial_i$ the obvious vector field on $M$, we have $(D_{\mathcal{F}})_p = \text{span}\{\rho\partial_i|_p : n-R+1\leq i \leq n\}$. So that any vector $v \in (D_{\mathcal{F}})_p$ may be extended by a $D_{\mathcal{F}}$-section. Moreover, if $p \in \partial M$, then the vector $\partial_n|_p$ is transverse to the boundary. Moreover, $\partial_n|_p \in (D_{\mathcal{F}})_p$ so that $(D_{\mathcal{F}})_p$ contains a vector transverse to the boundary. Hence, $D_{\mathcal{F}}$ is a normal distribution. We also see by construction that $\mathcal{D}_F$ is integrable. Altogether, $D_{\mathcal{F}} \in \mathcal{D}_N$.

Now, setting $D = D_{\mathcal{F}}$, we have $D \in \mathcal{D}_N$ so that there exists the unique partition $\mathcal{F}_D$ of $D$ by the maximal integral manifolds with boundary of $M$. Let $L \in \mathcal{F}_D$. Then, take a point $p \in L$ and consider the unique $K \in \mathcal{F}$ with $p \in K$. Since $K$ is a neat integral manifold with boundary of $D$, we have that $K \subset L$. Moreover, from Lemma \ref{neatintsareweak}, $K$ is an open submanifold of $L$. Hence, we get the partition $\{K \in \mathcal{F} : K \cap L \neq \emptyset\}$ of $L$ into open submanifolds with boundary of $L$. Since $L$ is connected, $\{K \in \mathcal{F} : K \cap L \neq \emptyset\}$ must be a singleton. Thus, $L \in \mathcal{F}$. Hence, $\mathcal{F}_D \subset \mathcal{F}$. So, since both $\mathcal{F}_D$ and $\mathcal{F}$ are partitions, we get $\mathcal{F}_D = \mathcal{F}$.

From this, we see that the set $\mathcal{F}_N$ of neat foliations by manifolds with boundary exists and that we have a bijection $\mathcal{D}_N \to \mathcal{F}_N$ given by $D \mapsto \mathcal{F}_D$ where $\mathcal{F}_D$ is the unique partition of $M$ by the maximal integral manifolds with boundary of $D$.
\end{proof}

\section{Remarks on integral manifold structure}
\label{sec:remark}

The reason we cannot ask for anything better than ``immersed" in the definition of maximal integral manifold with boundary is because of the presence of boundaries. Boundaries allow for integral manifolds with artificially low regularity as the following example illustrates. 

\begin{example}\label{necessaryweaken}
Polar coordinates $P : \mathbb{R}^+ \times [0,2\pi) \to \mathbb{R}^2 \backslash \{0\}$ given by $P(r,\theta) = (r\cos{\theta}, r \sin{ \theta})$ can be used, for any $\delta > 0$, to give a connected immersed submanifold $P((0,\delta)\times [0,2\pi))$ with (non-empty) boundary of the punctured disk of radius $\delta$, $D_{\delta} \backslash \{0\}$, such that $\text{Pts}(P((0,\delta)\times [0,2\pi))) = \text{Pts}(D_{\delta} \backslash \{0\})$. Let $D$ be a distribution on a manifold with boundary $M$ with an integral manifold with boundary $L$ of dimension at least $2$ which is weakly embedded in $M$. The above construction implies that there is an integral manifold with boundary $L^*$ of $D$ with $L^* \subset L$ but $L^*$ is not a weakly embedded in submanifold with boundary of $L$. This is a lot stronger than $L^*$ not being an open submanifold.
\end{example}

On the other hand, the weakly embedded submanifolds are the unique minimisers of their boundary points in the following sense.

\begin{proposition}\label{weakly embedded minimises boundary}
Let $L$ be a weakly embedded submanifold with boundary of a manifold with boundary $M$. Let $\tilde{L}$ be an immersed submanifold with boundary of $M$ such that $\text{Pts}(\tilde{L}) = \text{Pts}(L)$. Then, $\text{Pts}(\partial L) \subset \text{Pts}(\partial \tilde{L})$. If in addition $\text{Pts}(\partial L) = \text{Pts}(\partial \tilde{L})$, then $\tilde{L} = L$.
\end{proposition}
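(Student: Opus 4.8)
The plan is to use the weak embeddedness of $L$ to linearise the comparison between $\tilde L$ and $L$, and then to analyse the behaviour of the resulting map at boundary points. Writing $l : L \subset M$ and $\tilde l : \tilde L \subset M$ for the inclusions, the hypothesis $\text{Pts}(\tilde L) = \text{Pts}(L)$ says that $\tilde l$ is a smooth map (the inclusion of an immersed submanifold with boundary) whose image lies in $L$. Since $L$ is weakly embedded in $M$, the induced map $\phi \coloneqq \tilde l|^{L} : \tilde L \to L$ is smooth. On points $\phi$ is the identity, so it is bijective, and from $l \circ \phi = \tilde l$ together with the injectivity of $Tl$ and $T\tilde l$ we see that $\phi$ is an injective immersion. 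First I would record that $\phi$ preserves dimension: writing $k = \dim L$ and $k' = \dim \tilde L$, the immersion $\phi$ gives $k' \le k$, while $k' < k$ would force the surjection $\phi$ to have image of measure zero in $L$, which is impossible; hence $k' = k$.

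The inclusion $\text{Pts}(\partial L) \subset \text{Pts}(\partial \tilde L)$ is equivalent to the statement that $\phi$ maps interior points to interior points, so I would prove the latter. Suppose $q \in \interior \tilde L$ but $p \coloneqq \phi(q) \in \partial L$. Choose a boundary chart of $L$ about $p$ and let $\rho \ge 0$ be its final coordinate, a smooth boundary-defining function with $\rho(p) = 0$ and $d\rho_p \neq 0$. Then $\rho \circ \phi$ is a non-negative smooth function attaining its minimum value $0$ at the interior point $q$, so $d(\rho \circ \phi)_q = 0$; thus the image of $T\phi_q$ lies in $\ker d\rho_p$, a subspace of dimension $k-1$. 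This contradicts $\phi$ being an immersion of rank $k = k'$. Hence $\phi(\interior \tilde L) \subset \interior L$, which (as $\interior \tilde L$ and $\interior L$ are the complements of the boundaries in the common point set) is exactly the first assertion.

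For the second assertion, assume in addition $\text{Pts}(\partial L) = \text{Pts}(\partial \tilde L)$; equivalently $\interior \tilde L = \interior L$ and $\partial \tilde L = \partial L$ as point sets, so by the first part $\phi$ restricts to bijections $\interior \tilde L \to \interior L$ and $\partial \tilde L \to \partial L$. It then suffices to show $\phi$ is a local diffeomorphism, since a bijective local diffeomorphism is a diffeomorphism, which identifies the smooth structures and gives $\tilde L = L$. At interior points this is immediate, as $\phi$ is there an equidimensional immersion of boundaryless manifolds, hence a local diffeomorphism by the inverse function theorem. The main obstacle is the behaviour at a boundary point $q \in \partial \tilde L$ with $p = \phi(q) \in \partial L$, where an equidimensional immersion of manifolds with boundary need not be a local diffeomorphism a priori. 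Here I would pass to boundary charts, realising $\phi$ locally as a smooth map $\Phi$ from a neighbourhood of $0$ in $\mathbb{H}^k$ with $d\Phi_0$ invertible, carrying $\partial \mathbb{H}^k$ into $\partial \mathbb{H}^k$ and $\interior \mathbb{H}^k$ into $\interior \mathbb{H}^k$. Extending $\Phi$ to a smooth map on an open neighbourhood of $0$ in $\mathbb{R}^k$ and applying the ordinary inverse function theorem produces a local diffeomorphism $\hat\Phi$; the boundary-to-boundary condition forces $\hat\Phi$ to carry the hyperplane $\{x_k = 0\}$ onto a relatively open piece of itself, and the interior-to-interior condition then determines which side maps to which, so that the restriction $\Phi$ is a diffeomorphism onto a relatively open subset of $\mathbb{H}^k$. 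This exhibits $\phi$ as a local diffeomorphism at $q$ and finishes the proof. I expect this boundary inverse-function-theorem step --- ruling out a folding of $\mathbb{H}^k$ across its boundary --- to be the crux, with the dimension count and the minimum-principle observation being routine.
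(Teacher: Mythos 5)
Your proposal is correct, and its skeleton coincides with the paper's: weak embeddedness gives a bijective immersion $\phi : \tilde L \to L$, a Sard-type measure-zero argument forces $\dim \tilde L = \dim L$, one then shows $\phi$ carries interior points to interior points, and the equidimensional-plus-boundary situation is finally resolved by a half-space inverse function theorem. Where you genuinely differ is in how the two supporting facts are established. For interior preservation, the paper observes that the equidimensional immersion is a submersion and invokes Proposition \ref{interiorinc} (submersions map interiors to interiors), whose proof rests on the vector trichotomy of Proposition \ref{trichotomy}; you instead use a minimum principle --- the boundary defining function pulled back through $\phi$ attains an interior minimum, so its differential vanishes, contradicting full rank. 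Both are valid, and yours is self-contained and arguably more elementary. For the second assertion, the paper cites Proposition \ref{codim 0 bdry}, which rests on Corollary \ref{globinvfunc} and ultimately on Proposition \ref{invfunc}, proved there with the flowout theorem; you re-prove the needed half-space inverse function theorem by extending to a full neighbourhood, applying the ordinary inverse function theorem, and ruling out a fold across $\partial \mathbb{H}^k$. That no-folding step is the one point to firm up: after extending to $\hat\Phi$ on a ball $W$ about $0$, note that $\hat\Phi^k$ vanishes on $W \cap \{x_k = 0\}$, so its tangential derivatives vanish there, and invertibility together with interior-to-interior forces $\partial_k \hat\Phi^k(0) > 0$; shrinking $W$ so that $\partial_k \hat\Phi^k > 0$ throughout and integrating along vertical segments (which stay in the ball by convexity) yields $\hat\Phi\bigl(W \cap \{x_k < 0\}\bigr) \subset \{x_k < 0\}$, whence $\Phi(W \cap \mathbb{H}^k) = \hat\Phi(W) \cap \mathbb{H}^k$ is relatively open in $\mathbb{H}^k$ and $\Phi$ restricts to a diffeomorphism onto it. With that detail supplied your argument is complete; what the paper's factoring buys is reusability of Propositions \ref{interiorinc} and \ref{codim 0 bdry} elsewhere (e.g.\ in Lemma \ref{neatintsareweak}), while your route buys a shorter, self-contained proof of this proposition that avoids the flowout machinery.
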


\begin{proof}
By weak embeddedness, we get that $i : \tilde{L} \to L$ is a bijective immersion. Hence, $\dim{\tilde{L}} \leq \dim{L}$. If $\dim{L} = 0$, everything is trivial. So, assume $\dim{L} > 0$. Then, since $\text{Pts}(\tilde{L}) = \text{Pts}(L)$, we have that $\text{Pts}(\tilde{L})$ does not have measure zero in $L$. Hence by a corollary of Sard's Theorem (see \cite[Corollary 6.12]{Lee}), we cannot have $\dim{\tilde{L}} < \dim{L}$. So, $\dim{\tilde{L}} = \dim{L}$. Hence, $i : \tilde{L} \to L$ is a submersion. From Proposition \ref{interiorinc}, we must have that $\text{Pts}(\interior \tilde{L}) \subset \text{Pts}(\interior L)$. Hence, $\text{Pts}(\partial L) \subset \text{Pts}(\partial \tilde{L})$.

Next, assuming $\text{Pts}(\partial L) = \text{Pts}(\partial \tilde{L})$, we have both $\text{Pts}(\interior \tilde{L}) \subset \text{Pts}(\interior L)$ and $\text{Pts}(\partial \tilde{L}) \subset \text{Pts}(\partial L)$ whereby $\tilde{L}$ is a codimension $0$ immersed submanifold of $L$. Then, by Proposition \ref{codim 0 bdry}, $\tilde{L}$ is an open submanifold with boundary of $L$ and hence from $\text{Pts}(\tilde{L}) = \text{Pts}(L)$, we get that $\tilde{L} = L$.
\end{proof}

This is a generalisation of the following uniqueness result \cite[Theorem 5.33]{Lee} in the boundaryless case.

\begin{theorem}\label{uniqueweak}
If $M$ is a smooth manifold and $S$ is a weakly embedded submanifold, then $S$ has only one topology and smooth structure with respect to which it is an immersed submanifold.
\end{theorem}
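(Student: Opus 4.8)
The plan is to fix the given weakly embedded structure $S_0$ on the point set $\text{Pts}(S)$ and to show that any other smooth manifold structure $S_1$ on the same point set, with respect to which the inclusion $\iota_1 : S_1 \to M$ is an injective immersion, must coincide with $S_0$ both as a topological space and as a smooth manifold. The engine of the argument is the defining property of weak embeddedness applied to the two inclusions.

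First I would invoke weak embeddedness of $S_0$. Since $\iota_1 : S_1 \to M$ is smooth with $\iota_1(S_1) \subset S_0$ (the point sets agree), the induced map $\phi : S_1 \to S_0$, which is the set-theoretic identity on $\text{Pts}(S)$, is smooth. Writing $\iota_0 : S_0 \to M$ for the inclusion, we have $\iota_0 \circ \phi = \iota_1$, and differentiating shows $T\phi$ is injective wherever $T\iota_1$ is; thus $\phi$ is a smooth bijective immersion, giving $\dim S_1 \leq \dim S_0$.

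Next I would upgrade $\phi$ to a diffeomorphism. Since $\phi$ is surjective, its image is all of $S_0$, which does not have measure zero in itself; the corollary of Sard's theorem (\cite[Corollary 6.12]{Lee}) then excludes $\dim S_1 < \dim S_0$, forcing $\dim S_1 = \dim S_0$. An injective immersion between equidimensional manifolds is a local diffeomorphism by the inverse function theorem, hence an open map; being also a continuous bijection it is a homeomorphism, and a bijective local diffeomorphism has smooth inverse. Hence $\phi$ is a diffeomorphism that is the identity on points, so $S_0$ and $S_1$ carry the same topology and the same smooth structure.

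The main obstacle is precisely this upgrade: weak embeddedness alone delivers only smoothness of the bijection $\phi$, and a smooth bijective immersion need not be a diffeomorphism without control on dimension, so the Sard-type dimension count is the crux. I would also remark that the statement is the boundaryless specialisation of Proposition \ref{weakly embedded minimises boundary}: taking $\partial M = \emptyset$ so that $\partial S_0 = \partial S_1 = \emptyset$, the hypothesis $\text{Pts}(\partial S_0) = \text{Pts}(\partial S_1)$ holds vacuously and that proposition yields $S_1 = S_0$ at once.
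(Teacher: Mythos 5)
Your proof is correct and matches the paper's treatment: the paper obtains this theorem as the boundaryless special case of Proposition \ref{weakly embedded minimises boundary}, whose proof is exactly your argument --- weak embeddedness gives a smooth bijective immersion, the Sard/measure-zero corollary (\cite[Corollary 6.12]{Lee}) forces equal dimensions, and the inverse function theorem (via the codimension-$0$ result, Proposition \ref{codim 0 bdry}) upgrades the bijection to a diffeomorphism. Your closing remark, specialising Proposition \ref{weakly embedded minimises boundary} to $\partial M = \emptyset$ so that the boundary hypothesis holds vacuously, is precisely the paper's route.
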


In fact, we know from Proposition \ref{interior} that neat integral manifolds $L$ with boundary of a normal distribution on a manifold $M$ with boundary satisfy $\partial L = \partial M \cap L$ and from Proposition \ref{normal dist weak embedd} that $L$ is weakly embedded in $M$. With respect to the immersed submanifold and weakly embedded submanifolds with boundary satisfying this condition, Proposition \ref{weakly embedded minimises boundary} gives a formally identical generalisation of Theorem \ref{uniqueweak} as follows.

\begin{corollary}
If $M$ is a smooth manifold with boundary and $S$ is a weakly embedded submanifold with boundary satisfying $\partial S = \partial M \cap S$, then $S$ has only one topology and smooth structure with respect to which it is an immersed submanifold with boundary satisfying $\partial S = \partial M \cap S$.
\end{corollary}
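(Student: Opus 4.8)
The plan is to deduce this directly from Proposition \ref{weakly embedded minimises boundary}, which already carries all of the geometric content; what remains is a short point-set bookkeeping argument to verify its hypotheses. I would fix the underlying point set $\text{Pts}(S)$. By hypothesis, this set carries one topology and smooth structure—the weakly embedded one—making it an immersed submanifold with boundary satisfying $\partial S = \partial M \cap S$. I would then take an arbitrary competing structure $\tilde{S}$ on the same point set, that is, an immersed submanifold with boundary of $M$ with $\text{Pts}(\tilde{S}) = \text{Pts}(S)$ and satisfying $\partial \tilde{S} = \partial M \cap \tilde{S}$, and aim to show $\tilde{S} = S$.

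The key observation is that both boundary conditions pin the boundary point set to the same subset of $M$. The condition on $S$ gives $\text{Pts}(\partial S) = \text{Pts}(\partial M) \cap \text{Pts}(S)$, while the condition on $\tilde{S}$ gives $\text{Pts}(\partial \tilde{S}) = \text{Pts}(\partial M) \cap \text{Pts}(\tilde{S})$. Since $\text{Pts}(\tilde{S}) = \text{Pts}(S)$, the right-hand sides coincide, so $\text{Pts}(\partial \tilde{S}) = \text{Pts}(\partial S)$.

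With this equality of boundary point sets secured, Proposition \ref{weakly embedded minimises boundary} applies with $L = S$ and $\tilde{L} = \tilde{S}$: the hypothesis $\text{Pts}(\tilde{L}) = \text{Pts}(L)$ holds by assumption, and the ``in addition'' hypothesis $\text{Pts}(\partial L) = \text{Pts}(\partial \tilde{L})$ is exactly the equality just established. The conclusion $\tilde{L} = L$ then reads $\tilde{S} = S$, meaning the two structures agree as topological and smooth objects. This is precisely the asserted uniqueness, and it visibly specialises to Theorem \ref{uniqueweak} when $\partial M = \emptyset$.

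I do not anticipate a genuine obstacle, since the analytic difficulty (the Sard-type dimension comparison and the codimension-zero boundary argument) is already absorbed into Proposition \ref{weakly embedded minimises boundary}. The only subtlety worth flagging is the role of the neatness-type condition $\partial \tilde{S} = \partial M \cap \tilde{S}$ imposed on the competitor: it is exactly this condition that forces the boundary point sets to match. Without it, the first half of the Proposition yields only the inclusion $\text{Pts}(\partial S) \subset \text{Pts}(\partial \tilde{S})$, and uniqueness can genuinely fail—Example \ref{necessaryweaken} exhibits a competing structure with a strictly larger boundary point set.
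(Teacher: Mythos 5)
Your proposal is correct and follows exactly the route the paper intends: the corollary is stated as an immediate consequence of Proposition \ref{weakly embedded minimises boundary}, with the condition $\partial \tilde{S} = \partial M \cap \tilde{S}$ on the competing structure forcing equality of the boundary point sets so that the ``in addition'' clause of that proposition yields $\tilde{S} = S$. Your closing remark that Example \ref{necessaryweaken} shows the necessity of this condition is also consistent with the paper's discussion.
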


\section{Competing interests}

The authors declare no competing interests.

\section{Acknowledgements}

This paper was written while the first author received an Australian Government Research Training Program Scholarship at The University of Western Australia.

\appendix

\section{Submanifolds with boundary}
\label{sec:appendix}

Here we will discuss the results about weakly embedded submanifolds used in the proofs of the main results (Section \ref{sec:Proof}). Firstly, used in Proposition \ref{larger} and Lemma \ref{final work}, was the fact that the two notions of ``weakly embedded" are compatible as follows.

\begin{proposition}\label{weakly embedded compatibility}
If $L$ is a manifold which is weakly embedded in a manifold $M$, then the manifold with boundary $L$ is weakly embedded in the manifold with boundary $M$. 
\end{proposition}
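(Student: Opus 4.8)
The plan is to unwind the definition directly: take an arbitrary smooth map $f : N \to M$ from a manifold with boundary $N$ with $f(N) \subset L$, and show that the set-theoretic corestriction $f|^L : N \to L$ is smooth; since $L$ and $M$ carry empty boundary, this is precisely weak embeddedness of $L$ in the category of manifolds with boundary. The first reduction I would make is to \emph{continuity}. Because the inclusion $\iota : L \hookrightarrow M$ is an injective immersion, a map into $L$ is smooth as soon as it is continuous into $L$ and smooth into $M$; this is the standard smoothness criterion for immersed submanifolds (e.g.\ \cite{Lee}), and its proof is purely local, relying only on the fact that an immersion is locally an embedding, so it applies verbatim when the domain $N$ has boundary. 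Hence it suffices to prove that $f|^L$ is continuous, and since continuity is local, I would fix $p \in N$ and argue near $p$.

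For interior points the hypothesis is enough: since $\interior N$ is boundaryless and $f|_{\interior N} : \interior N \to M$ is smooth with image in $L$, the given (boundaryless) weak embeddedness makes $f|_{\interior N}^L$ smooth, hence continuous. Likewise $\partial N$ is boundaryless, so $f|_{\partial N}^L$ is continuous. The only thing left is \emph{continuity of $f|^L$ at a boundary point $p \in \partial N$}, where these two pieces of information must be glued across the boundary. To set this up I would pass to the local model of the immersion: since $\iota$ is locally an embedding, there is a slice chart $(W,\chi=(x,y))$ of $M$ about $q=f(p)$ in which the plaque of $L$ through $q$ (its connected component in the $L$-topology inside $L\cap W$) is a flat embedded slice $\tilde V = \chi^{-1}(\{y=0\})$, open in $L$, with $\iota|_{\tilde V}$ a diffeomorphism onto it. Injectivity of $\iota$ forces $\iota^{-1}(\{y=0\}\cap W)=\tilde V$, so $\tilde V$ is also closed in $L\cap W$; thus $\tilde V$ is clopen there, and every other plaque avoids $\{y=0\}$.

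The crux is to show that $f$ maps a whole neighborhood of $p$ into the single plaque $\tilde V$, for then, $\tilde V$ being embedded, the embedded-implies-weakly-embedded result in the category of manifolds with boundary (Theorem~\ref{embedded with bdry implies weak with boundary}) finishes continuity (indeed smoothness) at $p$. Working in a boundary chart identifying a neighborhood of $p$ with $\mathbb{H}^k \cap B_\rho$ (with $p=0$ and $\rho$ small enough that $f(\mathbb{H}^k\cap B_\rho)\subset W$), I would use that the interior part $\{x_k>0\}\cap B_\rho$ and the boundary face $\{x_k=0\}\cap B_\rho$ are each connected. Because the plaques are clopen in $L\cap W$ and $f|^L$ is continuous on each piece, the interior maps into a \emph{single} plaque and the boundary face maps into a single plaque, the latter necessarily $\tilde V$ since it contains $p$. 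It then remains to identify the interior plaque with $\tilde V$: the transverse coordinate $y\circ f$ is continuous on all of $\mathbb{H}^k\cap B_\rho$ and vanishes on the boundary face, and I would exploit the flat/clopen slice structure to conclude $y\circ f \equiv 0$, placing all of $f(\mathbb{H}^k\cap B_\rho)$ in $\tilde V$.

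The main obstacle is exactly this last identification, i.e.\ ruling out that interior points near $p$ escape into a plaque other than $\tilde V$ while still converging to $q$ in $M$; this is where weak embeddedness (through the clopen, flat plaque $\tilde V$ produced by the local embedding) does the real work, and it is the step I expect to require the most care. Once it is in place, the embedded case handles everything locally, and since $p$ was arbitrary, $f|^L$ is smooth, proving Proposition~\ref{weakly embedded compatibility}.
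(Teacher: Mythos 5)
Your reduction to continuity and your treatment of the interior and of the boundary face are fine, but the step you yourself flag as the crux --- identifying the plaque containing the image of the interior with the flat plaque $\tilde V$ --- is a genuine gap, and it cannot be closed with the tools you have set up. Everything you construct (the slice chart, the identity $\iota^{-1}(\{y=0\}\cap W)=\tilde V$, the clopen-ness of plaques in $L\cap W$) uses only that $\iota : L \to M$ is an injective immersion; it never uses weak embeddedness. The only places you do invoke weak embeddedness are the separate applications to $\interior N$ and to $\partial N$. These ingredients together are insufficient: take $L$ to be an injectively parametrized lemniscate (``figure eight'') in $\mathbb{R}^2$ with $q$ the crossing point, and let $f : [0,1) \to \mathbb{R}^2$ be a smooth curve with $f(0)=q$ that moves, for $t>0$, along the branch \emph{not} containing $q$ in the $L$-topology. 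Then $f$ is smooth into $M$ with image in $L$; $f|^L$ restricted to the interior $(0,1)$ is continuous and lands in a single plaque $P$; the boundary face $\{0\}$ lands in $\tilde V$; all plaques are clopen in $L\cap W$; and yet $P \neq \tilde V$ and $f|^L$ is discontinuous at $0$. Of course this $L$ is not weakly embedded, so it does not contradict the proposition --- but it shows that any argument using only your listed ingredients must fail. In particular, your final claim that $y\circ f$ vanishes on the boundary face and hence $y\circ f \equiv 0$ ``by the flat/clopen structure'' is exactly what this example violates: nothing in your setup prevents $y\circ f$ from being nonzero on the interior while tending to $0$ at the boundary, with the interior image lying in a plaque whose $M$-closure contains $q$.

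What is missing is a way of using weak embeddedness that couples the interior to the boundary face, and this is where the paper's proof does its work. It invokes Rudolph--Schmidt's characterization (Lemma \ref{Proposition 1.7.2}): $L$ is covered by flat charts $(L_i,\psi_i)$, subordinate to charts $(V_i,\Psi_i)$ of $M$, such that each $L_i$ is a $C^{\infty}$-\emph{arcwise connected component of $V_i\cap L$ relative to $M$} --- i.e.\ no point of $L_i$ can be joined to a point of any other plaque by a piecewise smooth curve \emph{in $M$} staying inside $V_i \cap L$. This is strictly stronger than your clopen-plaque property (the lemniscate has the latter but not the former), and it is precisely what bridges the boundary: choosing a chart about $p \in \partial N$ whose image is a convex half-ball and small enough that $f(U) \subset V_i$, the straight segment from any point $a$ of the half-ball to $\varphi(p)$ maps under $f$ to a smooth curve in $M$ lying in $V_i\cap L$ and joining $f(a)$ to $f(p)\in L_i$, so $f(U)\subset L_i$; smoothness of $f|^L$ near $p$ is then read off in the flat coordinates, with no separate continuity step needed. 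To salvage your outline you would have to replace the ``flat/clopen slice'' structure by this arcwise-component property (or prove an equivalent statement yourself), since that, and not the local slice picture, is the actual content of weak embeddedness being used.
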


To prove it, we slightly generalize an argument given in Proposition 1.7.2 of Rudolph and Schmidt's book \cite{RudolphSchmidt}. For this we require their definitions in our context.

Let $M$ be a smooth manifold. Following Rudolph and Schmidt \cite{RudolphSchmidt}, a continuous map $\gamma : [a,b] \to M$ (where $a < b$) is called a \textbf{\emph{piecewise smooth curve in $\bm{M}$}} if there exists $t_0 = a<t_1,...,t_r<b = t_{r+1}$ such that $\gamma|_{(t_i,t_{i+1})}$ is smooth for all $i=0,...,r$. A subset $A$ of $M$ is said to be \textbf{\emph{$\bm{C^{\infty}}$-arcwise connected relative to $\bm{M}$}} if any two points can be joined by a piecewise smooth curve in $M$. A subset $A$ is called a \textbf{\emph{$\bm{C^{\infty}}$-arcwise component of $\bm{N}$}} if it is an equivalence class of the equivalence relation of $C^{\infty}$-arcwise joinability of points.

\begin{example}
In contrast to piecewise smooth curves, the relation between points obtained by smooth curves $\gamma : [a,b] \to M$ is not transitive in general. The unit square in the plane is $C^{\infty}$-arcwise connected but opposing vertices are not joined by any smooth curve, despite both being joined by smooth curves to a common vertex.
\end{example}

The following is a direct result of Proposition 1.7.2 of Rudolph and Schmidt \cite{RudolphSchmidt}.

\begin{lemma}\label{Proposition 1.7.2}
Assume $L$ is a manifold which is weakly embedded in a manifold $M$. Set $l = \dim{L}$ and $n = \dim{M}$. Then, there exists a countable covering $\{(L_i,\psi_i)\}_{i \in \mathbb{N}}$ by charts of $L$ such that, for each $i$, there is a chart $(V_i,\Psi_i)$ on $M$ such that:
\begin{itemize}
  \item $L_i \subset V_i$ whereby $(\psi_i(x),0) = \Psi_i(x)$ for $x \in L_i$, where $0 \in \mathbb{R}^{n-l}$. 
  \item $L_i$ is a $C^{\infty}$-arcwise connected component of $V_i \cap L$ relative to $M$.
\end{itemize}
\end{lemma}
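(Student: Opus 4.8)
The plan is to build the atlas from the local normal form for injective immersions and then upgrade each slice to a $C^{\infty}$-arcwise component using the lifting property built into weak embeddedness; this is exactly the content of Proposition 1.7.2 of \cite{RudolphSchmidt}, so I would organise the argument to follow theirs. Write $\iota : L \subset M$ for the inclusion, which is an injective immersion. For each $p \in L$ the local immersion theorem (the rank theorem; see \cite{Lee}) supplies a chart $(L_p,\psi_p)$ of $L$ about $p$ and a chart $(V_p,\Psi_p)$ of $M$ about $\iota(p)$ with $\Psi_p(V_p)$ a cube, $\psi_p(L_p) = (-\epsilon,\epsilon)^l$, and $\Psi_p \circ \iota|_{L_p} = (\psi_p,0)$, so that $\iota(L_p)$ is precisely the central slice $\Psi_p^{-1}\big((-\epsilon,\epsilon)^l \times \{0\}\big)$ with $0 \in \mathbb{R}^{n-l}$. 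After shrinking I would moreover arrange $L_p$ to be the connected component of $p$ in the open subset $\iota^{-1}(V_p) \subset L$. Since $L$ is second countable it is Lindel\"of, so $\{L_p\}_{p\in L}$ admits a countable subcover $\{L_i\}_{i\in\mathbb{N}}$; this yields the countable atlas together with ambient charts $(V_i,\Psi_i)$ satisfying $\Psi_i(x) = (\psi_i(x),0)$ for $x \in L_i$.

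The essential remaining point is that each $L_i$ is a full $C^{\infty}$-arcwise connected component of $V_i \cap L$ relative to $M$. That $L_i$ lies in one such component is immediate: its image is the central slice, and any two points of the slice are joined by a segment that is straight in the $\Psi_i$-coordinates, hence a smooth curve in $M$ with image in $\iota(L) \cap V_i$. For maximality I would take any point of $\iota(L) \cap V_i$ joined to the slice by a piecewise smooth curve $\gamma : [a,b] \to M$ with image in $\iota(L) \cap V_i$. Applying the definition of weak embeddedness on each smooth piece of $\gamma$ lifts it, piece by piece, to a curve $\tilde{\gamma} : [a,b] \to L$ that is smooth on each subinterval and continuous on $[a,b]$ (the values at the breakpoints being forced by injectivity of $\iota$), with $\iota \circ \tilde{\gamma} = \gamma$ and image in $\iota^{-1}(V_i)$. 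As $\tilde{\gamma}([a,b])$ is connected in $L$ and meets $L_i$, the arrangement that $L_i$ is the connected component of $\iota^{-1}(V_i)$ forces $\tilde{\gamma}([a,b]) \subset L_i$; hence the endpoint $\gamma(b) = \iota(\tilde{\gamma}(b))$ lies in $\iota(L_i)$, i.e.\ on the central slice. Thus no point outside $L_i$ is $C^{\infty}$-arcwise joinable to it within $V_i \cap L$, and $L_i$ is exactly a $C^{\infty}$-arcwise component.

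The step I expect to be the main obstacle is securing both requirements on $L_i$ at once: that $\iota(L_i)$ be a single slice and that $L_i$ be a full connected component of $\iota^{-1}(V_i)$. A priori other sheets of the immersion meeting $V_i$ could reconnect to the component of $p$ along directions transverse to the slice, and for a general immersed submanifold this does happen; it is precisely weak embeddedness that rules this out and makes the plaques of the normal form coincide with the $C^{\infty}$-arcwise components. Controlling this is the substance of Proposition 1.7.2 of \cite{RudolphSchmidt}, so I would either shrink $V_i$ in the transverse directions and run their argument, or --- since the present statement is a direct translation of that proposition into the \emph{weakly embedded} terminology used here --- simply cite it.
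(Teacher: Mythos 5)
Your fallback option (simply cite Proposition 1.7.2 of Rudolph and Schmidt) is in fact exactly what the paper does --- the lemma is introduced there as ``a direct result'' of that proposition, with no independent proof given --- so on that route there is nothing to compare. But the self-contained argument you sketch has a genuine gap at its central step: the claim that the set-theoretic lift $\tilde{\gamma} : [a,b] \to L$ of a piecewise smooth curve $\gamma$ is \emph{continuous}, with the breakpoint values ``forced by injectivity of $\iota$''. Injectivity forces the \emph{values} of $\tilde{\gamma} = \iota^{-1}\circ\gamma$, but not its continuity: the manifold topology of $L$ is in general strictly finer than the subspace topology, and weak embeddedness, as you invoke it, applies only on the open pieces $(t_j,t_{j+1})$ (these are boundaryless manifolds); it says nothing at the junctions, which is precisely where a curve could jump from one sheet of $L$ to another. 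That such jumps are a real possibility for injective immersions is shown by the figure-eight: there the set-theoretic lift of a smooth curve through the crossing is well defined yet discontinuous. So the continuity assertion is not bookkeeping; it is essentially the entire content of the lemma (granting it, your connectedness argument does finish the proof), and as written you have not applied weak embeddedness anywhere that could deliver it.

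The step is moreover sensitive to which definition of ``piecewise smooth'' is in force. Under the paper's literal definition (continuous on $[a,b]$, smooth only on the \emph{open} subintervals), the continuity claim --- and the lemma itself --- actually fail: take $M = \mathbb{R}^2$ and $L$ the disjoint union of the $x$-axis and the graph $B$ of $h(x) = x^2\left(2+\sin\left(e^{1/x}\right)\right)$, $0<x<1$. One can check that no smooth curve in $\mathbb{R}^2$ with image in $L$ can approach $(0,0)$ through $B$ (while $\cos(e^{1/x})$ is bounded away from $0$, a bounded first derivative forces the curve to spend time at least $c/(\ln k)^2$ traversing the $k$-th oscillation, and $\sum_k 1/(\ln k)^2 = \infty$), and from this that $L$ is weakly embedded; yet $t \mapsto (t,h(t))$, extended by $(0,0)$ at $t=0$, is continuous on $[0,\delta]$, smooth on $(0,\delta)$, has image in $L$, and its lift is discontinuous at $0$. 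So the lemma really requires Rudolph and Schmidt's stronger convention that each piece $\gamma|_{[t_j,t_{j+1}]}$ is smooth up to its endpoints, and with that convention your gap can be repaired by a flat reparametrisation: precompose the piece with $\phi(s) = t_j + e^{-1/s}$ for $s>0$ and $\phi(s) = t_j$ for $s \leq 0$; then $\gamma\circ\phi$ is a genuinely smooth curve on an open interval with image in $L$, weak embeddedness applies to \emph{it}, and smoothness of its lift yields exactly the one-sided continuity of $\tilde{\gamma}$ at $t_j$ that you assumed. With that insertion your argument is complete; without it, the key step is unsupported.
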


With this we can now establish our compatibility.

\begin{proof}[Proof of Proposition \ref{weakly embedded compatibility}]
We fill in all details for self-containment.

Let $S$ be a manifold with boundary and let $f : S \to M$ be smooth whereby $f(S) \subset L$. Then, since $L$ is weakly embedded, we have that $(f|^L)|_{\interior S} = (f|_{\interior S})|^L$ is smooth. It remains to check smoothness about boundary points. Inherit the notation from Lemma \ref{Proposition 1.7.2} and set $s = \dim{S}$.

Let $p \in \partial S$. Then, take an $i$ for which $f(p)\in L_i$. Then, since $f^{-1}(V_i)$ is open in $S$, we can take a chart $(U,\varphi)$ about $p$ with $\varphi(U) = B_r(0) \cap \mathbb{H}^s$ and $\varphi(p) = 0$ such that $f(U) \subset V_i$. Then, $f(U) \subset V_i \cap L$. Since $B_r(0) \cap \mathbb{H}^s$ is a convex subset of $\mathbb{R}^s$, it easily follows that $f(U)$ is $C^{\infty}$-arcwise connected relative to $M$. Hence, $f(U) \subset L_i$.

Consider the representative $\psi_i \circ f|^L \circ \varphi_i^{-1} : \varphi_i(U) \to \psi_i(L_i)$ of $f|^L$. By assumption, $\Psi_i \circ f \circ \varphi_i^{-1} : \varphi_i(U) \to \Psi_i(V_i)$ is a smooth representative of $f$. From the relationship $\Psi_i \circ f \circ \varphi_i^{-1}|^{\psi_i(L_i)} = \left(\psi_i \circ f|^L \circ \varphi_i^{-1},0 \right)$, we see that $\psi_i \circ f|^L \circ \varphi_i^{-1}$ is smooth. Since $p \in \partial S$ was arbitrary, $f|^L$ is smooth.
\end{proof}

Used in Lemmas \ref{existsmax} and \ref{neatintsareweak} was Theorem 5.53 in Lee's book \cite{Lee} which is stated here in terms of our notion of weakly embedded for submanifolds with boundary.

\begin{theorem}\label{embedded with bdry implies weak with boundary}
If $N$ is an embedded submanifold with boundary of a manifold with boundary $M$, then $N$ is weakly embedded in $M$.
\end{theorem}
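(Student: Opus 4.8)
The plan is to follow the standard argument that embedded submanifolds are weakly embedded (Lee's boundaryless Theorem 5.53 and the discussion in Rudolph--Schmidt), adapting the slice-chart computation to accommodate boundaries. Let $f : P \to M$ be a smooth map between manifolds with boundary satisfying $f(P) \subset N$; I must show that the corestriction $f|^N : P \to N$ is smooth. I would separate this into a continuity step and a local smoothness step. Continuity is where embeddedness (as opposed to mere immersion) does its work: since $N$ is an embedded submanifold with boundary, its manifold topology coincides with the subspace topology inherited from $M$, and the set-theoretic corestriction of a continuous map to a subspace is continuous for the subspace topology. Hence $f|^N$ is continuous, and it remains only to check smoothness in local coordinates.

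For smoothness I would fix $p \in P$, set $q = f(p) \in N$, and write $k = \dim N$, $n = \dim M$. Because $N$ is embedded with boundary, there is a slice chart for $N$ about $q$: a chart $(V,\Psi)$ on $M$ in which $N \cap V$ is cut out by the vanishing of the final $n-k$ coordinates, intersected with the appropriate half-space conditions recording $\partial N$ and, when $q \in \partial M$, $\partial M$ as well, and such that the projection $\pi$ onto the first $k$ coordinates restricts to a chart $\psi$ for $N$ on $V \cap N$. Shrinking, I would take a chart $(U,\varphi)$ about $p$ with $f(U) \subset V$. Smoothness of $f$ gives that $\Psi \circ f \circ \varphi^{-1}$ is smooth; since $f(U) \subset N$, its last $n-k$ component functions vanish identically, and its first $k$ components are exactly $\pi \circ \Psi \circ f \circ \varphi^{-1} = \psi \circ f|^N \circ \varphi^{-1}$, the coordinate representation of $f|^N$. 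These components are smooth, so $f|^N$ is smooth near $p$; as $p$ was arbitrary, $f|^N$ is smooth and $N$ is weakly embedded.

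The main obstacle is entirely the bookkeeping of boundary slice charts rather than any new idea. One must produce, for each configuration of $q$ — namely $q \in \interior N \cap \interior M$, $q \in \partial N \cap \interior M$, $q \in \interior N \cap \partial M$, and $q \in \partial N \cap \partial M$ — a chart on $M$ adapted to the embedding of $N$ in which the computation above goes through, and then verify that the induced representation $\psi \circ f|^N \circ \varphi^{-1}$ genuinely takes values in the correct half-space model underlying the chart $\psi$ on $N$ (so that $f|^N$ is smooth as a map into a manifold with boundary, not merely into a coordinate slice). Once these adapted slice charts are in hand, the remainder is formally identical to the boundaryless case.
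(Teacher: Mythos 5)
Your skeleton (continuity from the subspace topology, then a local coordinate computation) is the right shape, and the continuity step is correct; note, however, that the paper offers no proof of this statement at all --- it is quoted as Theorem 5.53 of Lee's book --- so your attempt has to stand on its own. It does not, because of precisely the step you set aside as ``bookkeeping'': the claim that, because $N$ is embedded with boundary, every point $q = f(p)$ admits a chart of $M$ in which $N$ is a coordinate (half-)slice. This is false in the two configurations with $q \in \partial M$. An embedded submanifold of a manifold with boundary can meet $\partial M$ tangentially, and then no adapted slice chart of any kind exists. Concretely, take $M = \mathbb{H}^2$ and $N = \{(x,x^2) : x \in (-1,1)\}$ (or $N = \{(x,x^2) : x \in [0,1)\}$ if you want $\partial N \neq \emptyset$): $N$ is embedded, but any chart of $\mathbb{H}^2$ at the origin carries $N$ to a curve tangent to $\partial \mathbb{H}^2$, whereas every coordinate slice through a boundary point of $\mathbb{H}^n$ is either contained in $\partial\mathbb{H}^n$ or meets it transversally; tangency and containment are preserved by diffeomorphisms, so no chart can straighten $N$ into any slice model. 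This pathology is exactly why submanifold theory in manifolds with boundary is delicate (and why the paper works with \emph{neat} submanifolds elsewhere), so it cannot be dispatched as a routine case distinction over the four configurations of $q$.

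The gap is repairable, but it needs a genuine extra idea rather than more careful chart bookkeeping: the ambient boundary must be removed, not accommodated by new slice models. Smoothness of $f|^N$ is local, and a chart identifies a neighbourhood $V$ of $q = f(p)$ with an open subset $\hat{V}$ of $\mathbb{H}^n \subset \mathbb{R}^n$ (or of $\mathbb{R}^n$ when $q \in \interior M$, which is the easy case). The image $\hat{N}$ of $N \cap V$ under this chart is then still an embedded submanifold with boundary of the \emph{boundaryless} manifold $\mathbb{R}^n$: the inclusion $\hat{V} \hookrightarrow \mathbb{R}^n$ is a smooth immersion and a topological embedding, and a map into $\hat{V}$ is smooth if and only if it is smooth as a map into $\mathbb{R}^n$. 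In a boundaryless ambient manifold, embedded submanifolds with boundary genuinely do admit slice charts at interior points and half-slice charts at boundary points, so your projection computation applies there and yields smoothness of the corestriction near $p$; transporting back through the chart finishes the argument. With that reduction inserted, the rest of what you wrote is correct.
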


Used in Lemma \ref{neatintsareweak} was the following proposition about codimension $0$ submanifolds as follows.

\begin{proposition}\label{codim 0 bdry}
If $M$ is an immersed submanifold of codimension $0$ of a manifold with boundary $N$, then $\interior M \subset \interior N$. If in addition $\partial M \subset \partial N$, then $M$ an open submanifold with boundary of $N$.
\end{proposition}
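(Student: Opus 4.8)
The plan is to work with the inclusion $\iota : M \to N$, which by hypothesis is an injective immersion with $\dim M = \dim N =: n$, so that for every $p \in M$ the tangent map $T\iota|_p : T_pM \to T_{\iota(p)}N$ is injective between spaces of equal dimension, hence a linear isomorphism. This isomorphism property is what drives both parts.

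First I would establish $\interior M \subset \interior N$ by contradiction using the trichotomy of Proposition \ref{trichotomy}. Suppose $p \in \interior M$ but $\iota(p) \in \partial N$. Since $\iota(p)$ is a boundary point, there is an inward-pointing vector $w \in T_{\iota(p)}N$; set $v = (T\iota|_p)^{-1}(w)$. Because $p$ is interior, I may choose a two-sided curve $\gamma : (-\epsilon,\epsilon) \to M$ with $\gamma'(0) = v$. Then $(\iota \circ \gamma)|_{(-\epsilon,0]}$ is a curve into $N$ with velocity $w$ at $0$, exhibiting $w$ as outward-pointing. This contradicts the disjointness of the inward- and outward-pointing sets guaranteed by Proposition \ref{trichotomy}, so $\iota(p) \in \interior N$.

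For the second part, assuming $\partial M \subset \partial N$, the map $\iota$ now sends $\interior M$ into $\interior N$ (first part) and $\partial M$ into $\partial N$ (hypothesis). I would show $\iota$ is a local diffeomorphism in the category of manifolds with boundary; since $\iota$ is also injective, it is then an open map realising a diffeomorphism onto the open subset $\iota(M) \subseteq N$, which is precisely the assertion that $M$ is an open submanifold with boundary of $N$. At interior points $p \in \interior M$ one has $\iota(p) \in \interior N$ and both charts are boundaryless, so the ordinary inverse function theorem applies directly to the isomorphism $T\iota|_p$.

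The main obstacle is the boundary case $p \in \partial M$, where $\iota(p) \in \partial N$ and I must supply a half-space inverse function theorem. In half-space coordinates $\iota$ is represented near $0$ by a smooth map $F$ with $F(0)=0$ and $DF(0)$ invertible, carrying $\{x_n = 0\}$ into $\{x_n = 0\}$ and $\{x_n > 0\}$ into $\{x_n > 0\}$. Extending $F$ to a smooth $\tilde F$ on an open subset of $\mathbb{R}^n$ (possible by the definition of smoothness up to the boundary), the classical inverse function theorem makes $\tilde F$ a local diffeomorphism. Writing $\tilde F_n(x) = x_n\,g(x)$ via Hadamard's lemma (using $F_n \equiv 0$ on $\{x_n=0\}$), the invertibility of $DF(0)$ forces $g(0)\neq 0$ (its last row is $(0,\dots,0,g(0))$), and the interior-to-interior condition $F_n > 0$ for $x_n>0$ forces $g(0) > 0$; hence $\text{sgn}\,\tilde F_n(x) = \text{sgn}\,x_n$ near $0$, so $\tilde F$ restricts to a diffeomorphism between half-space neighbourhoods. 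This yields the local diffeomorphism at boundary points and, together with the interior case, completes the proof.
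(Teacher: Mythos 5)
Your proposal is correct, and its overall skeleton is the same as the paper's: establish $\interior M \subset \interior N$ via the boundary trichotomy, then upgrade the inclusion to a local diffeomorphism using a half-space inverse function theorem, and finish by noting that an injective local diffeomorphism is a diffeomorphism onto an open subset (this is the paper's route through Proposition \ref{interiorinc} and Corollary \ref{globinvfunc}). Your first part is essentially the paper's argument for Proposition \ref{interiorinc}, merely organized as a contradiction between inward- and outward-pointing vectors rather than as a direct appeal to the partition of Proposition \ref{trichotomy}. Where you genuinely depart from the paper is in the proof of the half-space inverse function theorem itself. The paper's Proposition \ref{invfunc} extends the coordinate representative to a map $F$ on a full ball, applies the classical inverse function theorem, and then runs a fairly elaborate flowout argument along the gradient field $\nabla F^n$ (invoking the Flowout Theorem and some connectedness bookkeeping) to show that $F$ carries the half-space part of a neighbourhood exactly onto the half-space part of its image. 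You instead use Hadamard's lemma to factor $\tilde F_n(x) = x_n\,g(x)$, deduce $g(0) \neq 0$ from the last row of $D\tilde F(0)$ being $(0,\dots,0,g(0))$, deduce $g(0) > 0$ from interior-preservation, and conclude that $\tilde F$ preserves the sign of the last coordinate near the origin; matching of half-space neighbourhoods is then immediate. Your version is shorter and more elementary, replacing the dynamical (flowout) machinery with a purely algebraic factorization, while the paper's version requires nothing beyond tools it already cites from Lee's book; the two arguments prove the same local statement under the same hypotheses (boundary-to-boundary and interior-to-interior, the latter supplied by your first part together with the hypothesis $\partial M \subset \partial N$).
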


To be thorough, to prove Proposition \ref{codim 0 bdry}, we first extend The Inverse Function Theorem to $\mathbb{H}^n$ in an elementary way as follows.

\begin{proposition}\label{invfunc}
Let $f : U \to V$ be a smooth map between open subsets of $\mathbb{H}^n$ such that $f(\partial U) \subset \partial V$ and $f(\interior U) \subset \interior V$. Let $p \in \partial U$ be such that $Df|_p$ is invertible. Then, there exists open subsets $U'$ and $V'$ of $\mathbb{H}^n$ with $p \in U'$ such that $f' : U' \to V'$ is a diffeomorphism (where $f' = f|_{U'}^{V'}$).
\end{proposition}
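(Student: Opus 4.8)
The plan is to reduce the statement to the ordinary Inverse Function Theorem on $\mathbb{R}^n$ and then check that the resulting local diffeomorphism respects the half-space decomposition. First I would use smoothness of $f$ at the boundary point $p$ to extend $f$ near $p$ to a smooth map $\tilde{f}$ defined on an open subset $\tilde{U}$ of $\mathbb{R}^n$ with $p \in \tilde{U}$ and $\tilde{U} \cap \mathbb{H}^n \subset U$; since $D\tilde{f}|_p = Df|_p$ is invertible, the classical Inverse Function Theorem yields open subsets $\tilde{U}', \tilde{V}'$ of $\mathbb{R}^n$ with $p \in \tilde{U}' \subset \tilde{U}$ on which $\tilde{f} : \tilde{U}' \to \tilde{V}'$ is a diffeomorphism. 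Shrinking, I may assume $\tilde{U}'$ is an open ball (so that $\tilde{U}'$, $\tilde{U}' \cap \{x_n > 0\}$ and $\tilde{U}' \cap \{x_n < 0\}$ are all connected, writing $x_n$ for the last coordinate, $\mathbb{H}^n = \{x_n \geq 0\}$). The whole proposition then follows once I show $\tilde{f}(\tilde{U}' \cap \mathbb{H}^n) = \tilde{V}' \cap \mathbb{H}^n$: for then $U' := \tilde{U}' \cap \mathbb{H}^n$ and $V' := \tilde{V}' \cap \mathbb{H}^n$ are open in $\mathbb{H}^n$ and contain $p$ and $f(p)$ respectively, and $f|_{U'}^{V'} = \tilde{f}|_{U'}^{V'}$ is a diffeomorphism of manifolds with boundary with smooth inverse $\tilde{f}^{-1}|_{V'}^{U'}$.

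Write $g := \tilde{f}_n$ for the last component and $\Sigma := \tilde{U}' \cap \partial \mathbb{H}^n = \tilde{U}' \cap \{x_n = 0\}$. The hypotheses $f(\partial U) \subset \partial V$ and $f(\interior U) \subset \interior V$, together with $\tilde{U}' \cap \mathbb{H}^n \subset U$ and $\tilde{f} = f$ there, translate into $g = 0$ on $\Sigma$ and $g > 0$ on $\tilde{U}' \cap \{x_n > 0\}$. Because $D\tilde{f}$ is invertible throughout $\tilde{U}'$, the covector $Dg = D\tilde{f}_n$ is nonzero everywhere, so $g$ is a submersion and $g^{-1}(0)$ is an embedded hypersurface. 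The key observation is that since $g$ vanishes on the open piece $\Sigma$ of the hyperplane $\{x_n = 0\}$, its partials along $\Sigma$ vanish, so at each $q \in \Sigma$ the nonzero covector $Dg|_q$ is a multiple of $dx_n$; hence $T_q g^{-1}(0) = T_q \Sigma = \{x_n = 0\}$. Thus the inclusion $\Sigma \hookrightarrow g^{-1}(0) \cap \tilde{U}'$ is an injective immersion between manifolds of the same dimension $n-1$, so $\Sigma$ is open in $g^{-1}(0) \cap \tilde{U}'$; it is also closed there (being the trace of the closed set $\{x_n = 0\}$), so after shrinking $\tilde{U}'$ so that $g^{-1}(0) \cap \tilde{U}'$ is a single graph by the implicit function theorem, I obtain $g^{-1}(0) \cap \tilde{U}' = \Sigma$.

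Consequently $g$ is nonvanishing on the connected sets $\tilde{U}' \cap \{x_n > 0\}$ and $\tilde{U}' \cap \{x_n < 0\}$ and so has constant sign on each. It is positive on the former; and since $g = 0$ on $\Sigma$ with $\partial g/\partial x_n|_q = Dg|_q(\partial_n) > 0$ for $q \in \Sigma$ (as $g(q + te_n) = tc + o(t)$ must be positive for small $t > 0$, forcing $c > 0$), $g$ is negative on $\tilde{U}' \cap \{x_n < 0\}$. Therefore $\tilde{f}$ preserves the sign of $x_n$, mapping the three pieces $\tilde{U}' \cap \{x_n > 0\}$, $\Sigma$, $\tilde{U}' \cap \{x_n < 0\}$ into $\{x_n > 0\}$, $\{x_n = 0\}$, $\{x_n < 0\}$ respectively. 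Since $\tilde{f} : \tilde{U}' \to \tilde{V}'$ is a bijection and these three source sets partition $\tilde{U}'$ while $\tilde{V}'$ is correspondingly partitioned by the sign of $x_n$, the containments force equalities; in particular $\tilde{f}(\tilde{U}' \cap \mathbb{H}^n) = \tilde{V}' \cap \mathbb{H}^n$, completing the reduction from the first paragraph. I expect the main obstacle to be precisely this middle step, verifying that the zero set of $g$ coincides locally with $\Sigma$ and then pinning down the sign of $g$ on the lower side, since it is here, rather than in the application of the classical theorem, that the interaction between the extension $\tilde{f}$ and the externally imposed half-space conditions on $f$ must be controlled.
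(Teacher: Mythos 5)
Your proof is correct, and it shares its skeleton with the paper's proof: extend $f$ to a smooth map on a full open neighbourhood of $p$ in $\mathbb{R}^n$, apply the classical Inverse Function Theorem, and then show that the resulting diffeomorphism maps the half-space part of the source onto the half-space part of the image. You also isolate exactly the paper's key observation: the last component $g=\tilde{f}_n$ vanishes identically on $\Sigma$, so $Dg$ is a nonzero multiple of $dx_n$ there. The two arguments diverge in how they exploit this. The paper runs the gradient flow of $F^n$ (its name for $g$) out of the boundary piece and invokes the Flowout Theorem: strict monotonicity of $F^n$ along gradient flow lines, plus a connectedness argument, shows that positive flow time lands in $\interior \mathbb{H}^n$ and negative flow time outside $\mathbb{H}^n$, which yields $F(X\cap\mathbb{H}^n)=F(X)\cap\mathbb{H}^n$. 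You instead pin down the zero set directly --- $g^{-1}(0)$ is locally a graph over the hyperplane by the implicit function theorem, and since it contains the open piece $\Sigma$ of that hyperplane it must coincide with it --- and then fix signs by constancy on the connected sets $\tilde{U}'\cap\{x_n>0\}$ and $\tilde{U}'\cap\{x_n<0\}$ together with a first-order Taylor expansion at $\Sigma$, finishing with the bijection/partition bookkeeping. Your route is more elementary, avoiding flow theory (the Flowout Theorem) entirely in favour of the implicit function theorem, connectedness, and Taylor's theorem; what the paper's flowout construction buys is a one-sided collar of the boundary piece that makes the sign analysis essentially immediate. Only one small point in your write-up deserves care: when you shrink $\tilde{U}'$ the second time (to make $g^{-1}(0)$ a single graph), shrink to a ball centred at $p$ or to a product of a ball and an interval, so that the upper and lower pieces remain connected, since your subsequent sign argument relies on that connectedness; with such a choice the two shrinkings can also be merged into one.
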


\begin{proof}
We will be very thorough with details.

By definition, at $p$, we have a smooth map $H_1 : U_1 \to \mathbb{R}^n$ on a neighbourhood of $p$ such that $H_1 = f$ on $U_1 \cap U$. So, by the Inverse Function Theorem, there is a diffeomorphism $H_2 : U_2 \to V_2$ on a neighbourhood $U_2 \subset U_1$ of $p$ where $H_2 = H_1$ on $U_2$ and hence $H_2 = f$ on $U_2 \cap U$. Hence (by restriction and co-restriction), we get a diffeomorphism $F : B_r(p) \to W$ where $B_r(p) \cap \mathbb{H}^n \subset U$ and $W \cap \mathbb{H}^n \subset V$ and $F = f$ on $B_r(p) \cap \mathbb{H}^n$ (for some $r > 0$).

Consider the $n^{\text{th}}$ component $F^n : B_r(p) \to \mathbb{R}$ of $F$. Consider then the vector field $\nabla F^n$ and the embedded submanifold $S = B_r(p) \cap \partial \mathbb{H}^n$ of $B_r(p)$ of codimension $1$. Notice that $F^n|_S = 0$. It follows that since $\nabla F^n \neq 0$ everywhere, $\nabla F^n$ is nowhere tangent to $S$.

Let $\Psi : \mathcal{D} \to B_r(p)$ be the flow of $\nabla F^n$ and $\mathcal{O} = (S \times \mathbb{R}) \cap \mathcal{D}$. By the Flowout Theorem (from Lee's book \cite{Lee}), there exists a positive function $\delta : S \to \mathbb{R}$ such that $G = \Psi|_{\mathcal{O}_{\delta}}^X$ is a diffeomorphism onto an open subset $X$ of $B_r(p)$ where $\mathcal{O}_{\delta} = \{(p,t) \in \mathcal{O} : |t| < \delta(p) \}$.

Now, since $X$ is open in $B_r(p)$ and $B_r(p) \cap \partial \mathbb{H}^n = S \subset X$, there exists $x \in X$ with $x \in \interior \mathbb{H}^n$. Then, $x \in X \cap \mathbb{H}^n$ and hence $x \in B_r(p) \cap \mathbb{H}^n$ so $x \in U$ and thus (due to $x \in \interior \mathbb{H}^n$), we have $x \in \interior U$ and hence $F(x) = f(x) \in \interior V$ so that $F^n(x) > 0$. Then, since $x \in X$ and $G$ is a diffeomorphism, we have that $x = G(x_0,t^*)$ for some $x_0 \in S$ where $|t^*| < \delta(p)$. Now, the function $t \mapsto F^n(G(x_0,t))$ is (strictly) increasing. Hence, if $t^* \leq 0$, we would have that $F^n(x) = F^n(G(x_0,t^*)) \leq F^n(G(x_0,0)) = F^n(x_0) = 0$, contradicting $F^n(x) > 0$. Thus $t^* > 0$.

Now, consider $\mathcal{O}_{\delta}^+ = \{(p,t) \in \mathcal{O} : 0 < t < \delta(p)\}$. Then, $\mathcal{O}_{\delta}^+$ is connected (since $S$ is) and open in $\mathcal{O}_{\delta}$ so that $G(\mathcal{O}_{\delta}^+)$ is connected and open in $B_r(p)$. Now, since $S \cap G(\mathcal{O}_{\delta}^+) = \emptyset$, we have that $G(\mathcal{O}_{\delta}^+) \subset B_r(p) \backslash \partial \mathbb{H}^n$ and thus, since $x = G(x_0,t^*) \in B_r(p) \cap \interior \mathbb{H}^n$, we have that $G(\mathcal{O}_{\delta}^+) \cap (B_r(p) \cap \interior \mathbb{H}^n) \neq \emptyset$. Hence, we must have that $G(\mathcal{O}_{\delta}^+) \subset B_r(p) \cap \interior \mathbb{H}^n$. Hence, setting $\mathcal{O}_{\delta}^{\geq} = \{(p,t) \in \mathcal{O} : 0 \leq t < \delta(p)\}$, we have $G(\mathcal{O}_{\delta}^{\geq}) \subset B_r(0) \cap \mathbb{H}^n$.

With this, we show that $F(X \cap \mathbb{H}^n) = F(X) \cap \mathbb{H}^n$. 

Let $y \in F(X \cap \mathbb{H}^n)$. Then, $y \in F(X)$ and $y \in F(B_r(0) \cap \mathbb{H}^n)$ and so $y \in f(B_r(0) \cap \mathbb{H}^n)$ and thus $y \in \mathbb{H}^n$. Altogether, $y \in F(X) \cap \mathbb{H}^n$.

Let $y \in F(X) \cap \mathbb{H}^n$. Then, $y \in F(X)$ so $y = F(x)$ for some $x \in X$. Since $G$ is a diffeomorphism, we have $x = G(x_0,t)$ for some $|t| < \delta(p)$. Now, if $t < 0$, we would have $F^n(G(x_0,t)) < F^n(G(x_0,0)) = F^n(x_0) = 0$ so that $y = F(x) \notin \mathbb{H}^n$. By contradiction, $t \geq 0$. Hence, $G(x_0,t) \in G(\mathcal{O}_{\delta}^{\geq})$ and thus $x = G(x_0,t) \in B_r(0) \cap \mathbb{H}^n \subset \mathbb{H}^n$. Hence, $x \in X \cap \mathbb{H}^n$. Altogether, $y \in F(X \cap \mathbb{H}^n)$.

One takes $U' = X \cap \mathbb{H}^n$ (clearly $p \in U'$) and $V' = F(X \cap \mathbb{H}^n) = F(X) \cap \mathbb{H}^n$ and $f' = f|_{U'}^{V'} = F|_{U'}^{V'}$.
\end{proof}

Proposition \ref{trichotomy} establishes the following interior inclusion result. 

\begin{proposition}\label{interiorinc}
Let $f : M \to N$ be a submersion between manifolds with boundary. Then, $f(\interior M) \subset \interior N$.
\end{proposition}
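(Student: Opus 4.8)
The plan is to argue by contradiction pointwise, using the trichotomy of Proposition \ref{trichotomy} to force a transverse vector in the target to belong to two disjoint classes at once. First I would dispose of the trivial case $\partial N = \emptyset$, since then $\interior N = N$ and there is nothing to prove; so assume $\partial N \neq \emptyset$ and fix $p \in \interior M$, supposing for contradiction that $f(p) \in \partial N$.

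Since $f(p)$ is a boundary point, there is a vector $w \in T_{f(p)}N$ which is transverse to $\partial N$ and outward-pointing; such a $w$ exists at any boundary point (in a boundary chart one may take $-\partial_n$). Because $f$ is a submersion, $Tf|_p : T_pM \to T_{f(p)}N$ is surjective, so I may choose $v \in T_pM$ with $Tf|_p(v) = w$. The crucial use of the hypothesis $p \in \interior M$ is that $p$ has a neighbourhood diffeomorphic to an open subset of $\mathbb{R}^{\dim M}$, so the tangent vector $v$ is realised by a \emph{two-sided} smooth curve $\gamma : (-\epsilon,\epsilon) \to M$ with $\gamma(0) = p$ and $\gamma'(0) = v$.

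Composing, $f \circ \gamma : (-\epsilon,\epsilon) \to N$ is a smooth curve with $(f \circ \gamma)(0) = f(p)$ and initial velocity $(f \circ \gamma)'(0) = Tf|_p(v) = w$. Restricting this curve to $[0,\epsilon)$ then exhibits, by definition, that $w$ is inward-pointing. But $w$ was chosen outward-pointing, and by Proposition \ref{trichotomy} the sets of inward-pointing and outward-pointing vectors are disjoint. This contradiction shows $f(p) \in \interior N$, and since $p \in \interior M$ was arbitrary, $f(\interior M) \subset \interior N$.

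The only step requiring care is the realisation of the lifted vector $v$ by a two-sided curve at $p$; this is precisely the property that distinguishes an interior point of $M$ from a boundary point, and it is exactly what makes a boundary value $f(p) \in \partial N$ untenable for a submersion. Everything else is an immediate application of surjectivity of $Tf|_p$ and the disjointness built into Proposition \ref{trichotomy}.
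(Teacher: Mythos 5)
Your proof is correct and takes essentially the same route as the paper's: lift a tangent vector at $f(p)$ through the surjective map $Tf|_p$, realize the lift by a \emph{two-sided} curve (possible precisely because $p \in \interior M$), push the curve forward, and invoke the trichotomy of Proposition \ref{trichotomy}. If anything, your version is more explicit than the paper's terse conclusion, since you fix an outward-pointing vector and exhibit the inward/outward contradiction directly rather than leaving the final appeal to the trichotomy implicit.
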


\begin{proof}
Let $q \in f(\interior M)$. Then, let $v \in T_{q}N$. Then, there is a $u \in T_p M$ with $Tf|_p(u) = v$ where $p \in \interior M$. For such a $u$ and $p$, there is a curve $\gamma : (-\epsilon,\epsilon) \to M$ for some $\epsilon > 0$ such that $\gamma'(0) = u$. Hence, the curve $f \circ \gamma : (-\epsilon,\epsilon) \to N$ is with $(f \circ \gamma)'(0) = v$. Thus, by Proposition \ref{trichotomy}, $q \in \interior N$. Thus, $f(\interior M) \subset \interior N$.
\end{proof}

Hence, an extension of the global inverse function theorem is a corollary of Proposition \ref{interiorinc} and Lemma \ref{invfunc} as follows.

\begin{corollary}\label{globinvfunc}
Let $f : M \to N$ be a smooth map between manifolds with boundary such that $T_pf$ is invertible for all $p \in M$. Then $f(\interior M) \subset \interior N$. In addition, if $f(\partial M) \subset \partial N$, then $f$ is a local diffeomorphism.
\end{corollary}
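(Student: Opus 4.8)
The plan is to establish the two conclusions in turn: the inclusion $f(\interior M) \subset \interior N$ will follow immediately from Proposition \ref{interiorinc}, and the local diffeomorphism claim will follow by splitting into interior and boundary points, applying the classical Inverse Function Theorem in the former case and its boundary analogue, Proposition \ref{invfunc}, in the latter. For the first conclusion, note that since $T_pf$ is invertible for every $p \in M$ it is in particular surjective, so $f$ is a submersion; Proposition \ref{interiorinc} then gives $f(\interior M) \subset \interior N$ at once.

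For the second conclusion, assume $f(\partial M) \subset \partial N$ and fix $p \in M$, aiming to produce a neighbourhood of $p$ on which $f$ restricts to a diffeomorphism. If $p \in \interior M$, then by the first conclusion $f$ corestricts to a smooth map $\interior M \to \interior N$ between boundaryless manifolds whose tangent map at $p$ is the invertible map $T_pf$; the classical Inverse Function Theorem then yields a neighbourhood of $p$, open in $M$, that $f$ carries diffeomorphically onto an open subset of $\interior N$, which is open in $N$.

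If instead $p \in \partial M$, then $f(p) \in \partial N$ by hypothesis, and I would pass to boundary charts $(U,\varphi)$ about $p$ and $(V,\psi)$ about $f(p)$, shrinking $U$ so that $f(U) \subset V$. The coordinate representative $\hat{f} = \psi \circ f \circ \varphi^{-1}$ is a smooth map between open subsets of $\mathbb{H}^n$, and I would verify that it meets the hypotheses of Proposition \ref{invfunc}: the boundary-preserving condition holds because the charts carry $\partial M$ and $\partial N$ into $\partial \mathbb{H}^n$ together with $f(\partial M) \subset \partial N$; the interior-preserving condition is exactly the first conclusion transported through the charts; and invertibility of $D\hat{f}$ at $\varphi(p)$ follows from that of $T_pf$, since chart transitions are diffeomorphisms. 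Proposition \ref{invfunc} then supplies open sets $U', V' \subset \mathbb{H}^n$ with $\varphi(p) \in U'$ on which $\hat{f}$ restricts to a diffeomorphism; pulling this back through the charts gives a diffeomorphism of the open set $\varphi^{-1}(U')$ onto the open set $\psi^{-1}(V')$. Hence $f$ is a local diffeomorphism at every point.

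The main obstacle is the boundary case, and it has already been isolated into Proposition \ref{invfunc}; the only remaining work is the routine bookkeeping confirming that the chart representative $\hat{f}$ inherits the boundary- and interior-preservation hypotheses, which is precisely where the first conclusion and the standing assumption $f(\partial M) \subset \partial N$ enter.
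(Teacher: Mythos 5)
Your proposal is correct and follows exactly the route the paper intends: the paper states this corollary as an immediate consequence of Proposition \ref{interiorinc} (invertible tangent maps make $f$ a submersion, forcing $f(\interior M) \subset \interior N$) and Proposition \ref{invfunc} (applied in boundary charts, whose hypotheses are supplied by the first conclusion together with $f(\partial M) \subset \partial N$), with the classical Inverse Function Theorem handling interior points. Your write-up simply makes explicit the chart bookkeeping that the paper leaves to the reader.
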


This easily establishes our required codimension $0$ result.

\begin{proof}[Proof of Proposition \ref{codim 0 bdry}]
We have that the inclusion $i : M \to N$ is smooth and $T_pi$ is invertible for all $p \in M$. Then, apply Corollary \ref{globinvfunc}.
\end{proof}

\bibliography{Neat_foliations_by_manifolds_with_boundary}

\end{document}